\numberwithin{equation}{section}
\numberwithin{figure}{section}
\theoremstyle{plain}
\newtheorem{thm}{\protect\theoremname}[section]
\theoremstyle{plain}
\newtheorem{cor}[thm]{\protect\corollaryname}
\theoremstyle{plain}
\newtheorem{lem}[thm]{Lemma}
\theoremstyle{plain}
\newtheorem{prop}[thm]{Proposition}
\theoremstyle{definition}
\theoremstyle{definition}
\newtheorem{defn}[thm]{\protect\definitionname}
\theoremstyle{definition}
\newtheorem{exmp}[thm]{Example}
\theoremstyle{definition}
\newtheorem{qn}{Question}
\newtheorem{rem}[thm]{\protect\remarkname}
\newcommand{\p}{\mathbf{p}}
\newcommand{\q}{\mathbf{q}}
\newcommand{\bfl}{\mathbf{l}}
\newcommand{\uCFK}{CFK^{\prime}}
\newcommand{\uHFK}{HFK^{\prime}}
\newcommand{\uCFL}{CFL^{\prime}}
\newcommand{\uCFKI}{CFKI^{\prime}}
\newcommand{\uHFKI}{HFKI^{\prime}}
\newcommand{\ubar}[1]{\text{\b{$#1$}}}
\title{On the nonorientable four-ball genus of torus knots}
\author{Fraser Binns}
\address{Department of Mathematics, Princeton University}
\email{fb1673@prinnceton.edu}
\author{Sungkyung Kang}
\address{Mathematical Institute, University of Oxford}
\email{skkang@ibs.re.kr}
\author{Jonathan Simone}
\address{School of Mathematics, Georgia Institute of Technology}
\email{jsimone7@gatech.edu}
\author{Paula Tru\"{o}l}
\address{Max Planck Institute for Mathematics}
\email{truoel@mpim-bonn.mpg.de}
\providecommand{\corollaryname}{Corollary}
\providecommand{\definitionname}{Definition}
\providecommand{\remarkname}{Remark}
\providecommand{\theoremname}{Theorem}
\newcommand{\ie}{i.\,e.~}
\newcommand{\eg}{e.\,g.~}
\newcommand{\QQ}{\mathbb{Q}}
\DeclareMathOperator{\arf}{Arf}
\newcommand{\gammatop}{\gamma_4^{\text{top}}}
\newcommandx{\info}[2][1=]{\todo[linecolor=green,backgroundcolor=green!25,bordercolor=green,#1]{#2}}
\begin{document}

\begin{abstract}
The nonorientable four-ball genus of a knot $K$ in $S^3$ is the minimal first Betti number of nonorientable surfaces in $B^4$ bounded by $K$. By amalgamating ideas from involutive knot Floer homology and unoriented knot Floer homology, we give a new lower bound on the smooth nonorientable four-ball genus $\gamma_4$ of any knot. This bound is sharp for several families of torus knots, including $T_{4n,(2n\pm 1)^2}$ for even $n\ge2$, a family Longo showed were counterexamples to Batson's conjecture. We also prove that, whenever $p$ is an even positive integer and $\frac{p}{2}$ is not a perfect square, the torus knot $T_{p,q}$ does not bound a locally flat M\"obius band for almost all integers $q$ relatively prime to $p$.
\end{abstract}
\maketitle

\section{Introduction}
The \emph{nonorientable (smooth) $4$-ball genus} $\gamma_4(K)$ of a knot $K$ in the $3$-sphere $S^3$ is the minimal first Betti number $b_1(F) = \dim H_1(F; \QQ)$ of any smoothly embedded nonorientable surface $F$ in the $4$-ball $B^4$ with $\partial F = K$. 
By definition, $\gamma_4(K) \geq 1$ for any knot $K$ in $S^3$, and $\gamma_4(K) = 1$ if and only if $K$ bounds a smooth M{\"o}bius band in $B^4$. Moreover, if $K$ is smoothly slice, \ie $K$ bounds a smoothly embedded disk in $B^4$, then $\gamma_4(K)=1$.  

First introduced by Murakami and Yasuhara \cite{murakami2000four}, $\gamma_4$ has drawn increased attention in recent years. 
One class of knots that has received considerable attention is the class of torus knots $T_{p,q}$ for relatively prime positive integers $p$ and $q$.
For $p\le 3$ (or similarly, $q\le 3$), it is known that $\gamma_4(T_{p,q})=1$: $T_{1,q}$ is the unknot for all $q$; $T_{2,q}$ bounds an obvious band with $q$ half twists (\ie a M{\"o}bius band); and a single nonorientable band move turns $T_{3,q}$ into the unknot for each $q$ (see \cite[Section 2.1]{allen2020nonorientable}). 
In general, however, the values of $\gamma_4$ for torus knots are not known.

In \cite{batson2012nonorientable}, Batson constructed a nonorientable surface $F_{p,q}$ in $B^4$ with $\partial F_{p,q} =T_{p,q}$ via \emph{pinch moves} (defined in \Cref{sec:upperBounds}). The  minimal first Betti number of $F_{p,q}$ is called the \emph{pinch number} of $T_{p,q}$, which we denote by $\vartheta(T_{p,q})$. Batson conjectured that $\gamma_4(T_{p,q})=\vartheta(T_{p,q})$. This could be seen as the nonorientable analogue of the Milnor conjecture
\begin{align}\label{eq:orientablegeneraoftorusknots}
g_4\left(T_{p,q}\right) = g\left(T_{p,q}\right) = \frac{(p-1)(q-1)}{2},
\end{align}
 which was first proven by Kronheimer and Mrowka \cite{kronheimermrowka}. Here $g_4(K)$ denotes the \emph{(orientable) $4$-ball genus} of a knot $K$ in $S^3$ --- the minimal genus of a compact, oriented surface smoothly embedded in $B^4$ bounded by $K$ --- and $g(K)$ denotes the \emph{$3$-genus} of a knot $K$ in $S^3$ --- the minimal genus of a compact, oriented surface smoothly embedded in $S^3$ bounded by $K$.\footnote{In contrast to the orientable setting, where $g_4\left(T_{p,q}\right)= g\left(T_{p,q}\right)$ (see \Cref{eq:orientablegeneraoftorusknots}), it is in general not true that $\gamma_4\left(T_{p,q}\right) = \gamma_3\left(T_{p,q}\right)$ for any relatively prime positive integers $p$ and $q$, where $\gamma_3\left(T_{p,q}\right)$ is the minimal first Betti number of any nonorientable smooth surface $F$ in $S^3$ with $\partial F = K$ (as defined by Clark \cite{clark}).
In fact, the difference between $\gamma_3$ and $\gamma_4$ can be arbitrarily large \cite[Theorem 1.1]{jabukavanCott3and4}.
By work of Teragaito \cite{teragaito}, it is known that the value of $\gamma_3(T_{p,q})$ is determined by a recursive, arithmetic function of $p$ and $q$.}

By showing that $\gamma_4\left(T_{2k,2k-1}\right)= \vartheta\left(T_{2k,2k-1}\right)= k-1$ for all $k \geq 1$, Batson provided an infinite family of torus knots for which his conjecture is true. Further such infinite families of torus knots 
were found by Jabuka and Van Cott \cite{jabuka2019nonorientable}. 
However, Batson's conjecture was disproved by Lobb \cite{lobb}, who found that $\gamma_4(T_{4,9})=1$ while $\vartheta(T_{4,9})=2$. Soon after, Longo \cite{longo} generalized this example, showing that $\gamma_4(T_{4n,(2n\pm 1)^2})\leq 2n-1$ while $\vartheta(T_{4n,(2n\pm 1)^2})=2n$ for all $n\ge 2$. Tairi \cite{tairi} provided another infinite family of counterexamples, showing in particular that $\gamma_4 (T_{4,11})=1$ while $\vartheta(T_{4,11})=2$.

In this article, we will shed more light on the nonorientable $4$-ball genus of torus knots, in both the smooth and topological categories. In particular, we will:
\begin{itemize}
    \item develop a new lower bound on $\gamma_4$ using involutive unoriented Floer homology (\Cref{mobiusthm} and \Cref{highergenusthm} below, see \Cref{subsec:lowerbound} for details), which --- for so-called $L$-space knots (and consequently, torus knots) --- can be read off from the Alexander polynomial (\Cref{thm:lowerbound} below);
    \item develop an obstruction to torus knots bounding (locally flat) M{\"o}bius bands using linking forms and number-theoretic arguments (see \Cref{subsec:obstructmobius});
    \item and use these new tools along with existing tools to either compute or give narrow bounds on $\gamma_4$ for some infinite families of torus knots.
\end{itemize}  

Throughout, we will always assume that for any torus knot $T_{p,q}$, $p$ and $q$ are positive, relatively prime integers. The main lower bound on $\gamma_4$ that we will use  is the following.

\begin{thm}
Let $K$ be an $L$-space knot. Write the Alexander polynomial $\Delta_K(t)$ of $K$ as a monic, symmetric Laurent polynomial, and suppose that its constant term is $-1$ and that the first nonzero term of positive degree is $t^k$, \ie $\Delta_K(t)=\cdots + t^{-k} - 1 + t^k - \cdots$. 
Then we have the following lower bound on the nonorientable $4$-ball genus:
$k-1 \leq \gamma_4(K)$. In addition, if $K$ bounds a smoothly embedded M{\" o}bius band in $B^4$, then $k=1$. 
\label{thm:lowerbound}
\end{thm}

\begin{rem}
To the best of the authors' knowledge, \Cref{thm:lowerbound} provides the first Heegaard-Floer theoretical proof that the figure-eight knot does not bound a smooth Möbius band in $B^4$ (see \Cref{ex:figeight}). This seems worth mentioning since it was not until the advent of involutive theory that Heegaard Floer could be used to show that the figure-eight knot is not slice \cite{hendricks2017involutive}. 
\end{rem}

\begin{rem} Notice the difference between the M{\" o}bius band case and the case of higher first Betti number. Interestingly, this difference is substantial and cannot be removed since the bound in the case of higher first Betti number is sometimes sharp; see the proof of \Cref{prop:longoextension} in \Cref{sec:calculations}.
\label{rem:difference}
\end{rem}

\begin{rem}
The integer $k$ in \Cref{thm:lowerbound} is called the \emph{stretch} of the given $L$-space knot $K$, which was defined by Borodzik and Hom in \cite{borodzik2019involutive}. It is shown in \cite[Theorem 3.10]{borodzik2019involutive} that the stretch of a torus knot $T_{p,q}$ is given by $\left \lfloor{\frac{a_k-1}{2}}\right \rfloor +1$, where $q>p$ and $$\frac{q}{p}=[a_0,\cdots,a_k]=a_0 +\frac{1}{a_1 +\displaystyle \frac{1}{\displaystyle a_2+\frac{1}{\ddots+\displaystyle\frac{1}{a_k}}}}$$ is a regular continued fraction expansion with $a_k >1$ and $k\ge 2$.
\end{rem}

A quick overview of the proof of \Cref{thm:lowerbound} is provided in \Cref{subsec:lowerbound}, while the full proof can be found in \Cref{sec:involutive}. It turns out that \Cref{thm:lowerbound} provides a sharper bound than many other existing lower bounds involving knot invariants (see \Cref{sec:background}). In fact, we will use \Cref{thm:lowerbound} to give a partial affirmative answer to a question of Longo \cite[Question 4.3]{longo}, which asks whether his examples $T_{4n,(2n\pm 1)^2}$ satisfy  $\gamma_4(T_{4n,(2n\pm1)^2})=2n-1$.

\begin{prop}
If $n\ge 2$ is even, then $\gamma_4\left(T_{4n,(2n\pm1)^2}\right)=2n-1=\vartheta\left(T_{4n,(2n\pm 1)^2}\right) -1$.
\label{prop:longoanswer}
\end{prop}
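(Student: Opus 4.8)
The plan is to combine the existing upper bound with the new lower bound from \Cref{thm:lowerbound}. Longo has already shown $\gamma_4\left(T_{4n,(2n\pm 1)^2}\right)\le 2n-1$ for all $n\ge 2$, so it suffices to prove the matching lower bound $\gamma_4\left(T_{4n,(2n\pm 1)^2}\right)\ge 2n-1$ when $n$ is even, which by \Cref{thm:lowerbound} amounts to showing that the stretch $k$ of these torus knots equals $2n$. Since torus knots are $L$-space knots, \Cref{thm:lowerbound} applies, and the conclusion $k\le g+1$ with $g=\gamma_4$ then gives $2n\le \gamma_4+1$, i.e.\ $\gamma_4\ge 2n-1$; combined with Longo's upper bound and the fact that $\vartheta\left(T_{4n,(2n\pm1)^2}\right)=2n$ (also due to Longo), this yields the full statement.

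The core computation is therefore to determine the stretch of $T_{4n,(2n\pm 1)^2}$ for even $n$. I would use the formula recalled in the remark following \Cref{thm:lowerbound}: writing $q/p$ (with $q>p$) as a regular continued fraction $[a_0,\dots,a_k]$ with $a_k>1$, the stretch equals $\lfloor (a_k-1)/2\rfloor+1$. Here $p=4n$ and $q=(2n\pm 1)^2=4n^2\pm 4n+1$. First I would carry out the Euclidean algorithm: $q = n\cdot(4n) + (\pm 4n+1)$, so $a_0=n$ (in the $+$ case; one must be a little careful with signs in the $-$ case, where $q=4n^2-4n+1$ gives $q=(n-1)\cdot 4n + 1$, so the continued fraction is even shorter). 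Continuing, one divides $4n$ by the remainder; the key point is that the last partial quotient $a_k$ will turn out to be the one controlling the stretch, and I expect $a_k$ to be of size roughly $4n$ (more precisely, something like $4n\pm 1$ or a comparably large even/odd number), so that $\lfloor(a_k-1)/2\rfloor+1 = 2n$. The parity hypothesis that $n$ is even should enter precisely here, in ensuring $\lfloor(a_k-1)/2\rfloor$ lands on $2n-1$ rather than off by one — this is consistent with \Cref{rem:difference}, which warns that the bound behaves differently (and here only achieves sharpness) under such a parity condition.

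Alternatively, rather than computing the continued fraction directly, I would read off the stretch from the Alexander polynomial via the definition in \Cref{thm:lowerbound} itself: write $\Delta_{T_{p,q}}(t)$ as a monic symmetric Laurent polynomial, check that its constant term is $-1$, and identify the smallest positive exponent $k$ with nonzero coefficient. For torus knots $\Delta_{T_{p,q}}(t)=\frac{(t^{pq}-1)(t-1)}{(t^p-1)(t^q-1)}$, and the gap structure near the top (equivalently, by symmetry, near the bottom) of this polynomial is governed by the numerical semigroup generated by $p$ and $q$; the stretch $k$ is the first gap in $\{0\}\cup$(semigroup complement) pattern, i.e.\ the smallest positive integer not representable appropriately. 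Concretely this reduces to a short lattice-point/Frobenius-type argument with $p=4n$ and $q=(2n\pm1)^2$.

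The main obstacle I anticipate is purely arithmetic bookkeeping: getting the continued fraction expansion (or equivalently the semigroup gap count) exactly right, including the $\pm$ cases and the role of the parity of $n$, without an off-by-one error — since the whole proposition hinges on the stretch being \emph{exactly} $2n$ and not $2n\pm 1$. A secondary point to be careful about is confirming that the hypotheses of \Cref{thm:lowerbound} (monic, symmetric, constant term $-1$, well-defined $k\ge 1$) genuinely hold for this family, though this is automatic for any nontrivial $L$-space knot whose Alexander polynomial has more than the central term. Once the stretch is pinned down, the rest of the argument is immediate from \Cref{thm:lowerbound} and Longo's results.
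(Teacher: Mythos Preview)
Your overall strategy --- Longo's upper bound together with the lower bound from \Cref{thm:lowerbound} via a stretch computation --- is correct and is exactly what the paper does. The paper observes that $(2n\pm1)^2 = 4n(n\pm1)+1$, so that when $n$ is even the knot $T_{4n,(2n\pm1)^2}$ lies in the family $T_{p,(2k+1)p+1}$ with $p=4n$ even; the semigroup argument of \Cref{subsec:peven} then shows directly that $\Delta_K$ has constant term $-1$ and first positive exponent $p/2=2n$, giving $\gamma_4\ge 2n-1$.

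However, you misidentify where the parity of $n$ enters, and your claim that the hypothesis ``constant term $-1$'' is ``automatic for any nontrivial $L$-space knot'' is false. The continued fraction is simply $[n\pm1;4n]$ (your Euclidean step $q=n\cdot 4n+(\pm 4n+1)$ is wrong: the remainder exceeds $4n$ or is negative), so $\lfloor(a_k-1)/2\rfloor+1=2n$ \emph{regardless} of the parity of $n$; the parity does not shift this value. What the parity of $n$ actually controls is whether the constant term of $\Delta_K$ is $-1$ or $+1$. When $n$ is odd, $q\equiv 1\pmod{2p}$ and the degree $d=\frac{(p-1)(q-1)}{2}$ is a multiple of $p$, hence lies in the semigroup $S_K$; by \Cref{semigrouplemma} the constant term is then $+1$, the hypothesis of \Cref{thm:lowerbound} fails, and no lower bound follows. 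This is not arithmetic bookkeeping but the reason the proposition is stated only for even $n$. (As a smaller point, your continued fraction has length $k=1$, outside the stated range $k\ge 2$ of the Remark's formula, so that route would in any case need separate justification; the paper avoids this by using the semigroup argument directly.)
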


In fact, we prove the following more general result.

\begin{prop}
Let $n\ge2$ be even and $k\ge 0$. Then\label{prop:longoextension} $$\gamma_4\left(T_{4n+2k,(4n+2k)(n\pm1)+1}\right) = 2n+k-1=\vartheta\left(T_{4n+2k,(4n+2k)(n\pm1)+1}\right)-1.$$
\end{prop}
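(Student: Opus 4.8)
The plan is to prove the two inequalities $\gamma_4\left(T_{4n+2k,(4n+2k)(n\pm1)+1}\right) \le 2n+k-1$ and $\gamma_4\left(T_{4n+2k,(4n+2k)(n\pm1)+1}\right) \ge 2n+k-1$ separately, and to verify the pinch number computation $\vartheta = 2n+k$ along the way.

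For the upper bound, I would exhibit an explicit nonorientable surface in $B^4$ with first Betti number $2n+k-1$. Writing $p = 4n+2k$ and $q = p(n\pm 1)+1$, one checks $q \equiv 1 \pmod p$, so a single nonorientable band move (a ``pinch move'' in the sense of \Cref{sec:upperBounds}) reduces $T_{p,q}$ to $T_{p,q-p} = T_{p,p(n\pm1)+1-p}$; iterating, this is the strategy Longo used in \cite{longo} for the case $k=0$. The point is that repeated pinch moves together with the known slice/Möbius-band realizations for small torus knots (e.g.\ $T_{p,1}$ being the unknot, or the base case bordism used by Longo) should yield a surface of the claimed first Betti number. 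Concretely, I expect to perform $k$ pinch moves to pass from $T_{4n+2k,\,\ast}$ down to a knot of the form $T_{4n,\,\ast}$ and then invoke (or re-derive) Longo's bound $\gamma_4(T_{4n,(2n\pm1)^2}) \le 2n-1$; since each pinch move raises $b_1$ by $1$, this gives $2n-1+k = 2n+k-1$. I would spell out why the second family $q = p(n-1)+1$ behaves symmetrically to $q = p(n+1)+1$ under the relevant continued fraction symmetry.

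For the lower bound, the engine is \Cref{thm:lowerbound}: torus knots are $L$-space knots, so I need to compute the stretch $k_0$ (in the notation of the remark after \Cref{thm:lowerbound}, the integer called $k$ there — I will rename to avoid the clash with the $k$ in the proposition) of $T_{p,q}$ and conclude $\gamma_4 \ge k_0 - 1$. Using the Borodzik--Hom formula, the stretch is $\lfloor (a_{\ell}-1)/2\rfloor + 1$ where $q/p = [a_0,\dots,a_\ell]$ is the regular continued fraction expansion with $a_\ell > 1$. Since $q = p(n\pm1)+1$, we have $q/p = (n\pm1) + 1/p$, i.e.\ $q/p = [\,n\pm1,\, p\,] = [\,n\pm1,\,4n+2k\,]$ — but this has length $\ell = 1 < 2$, so I would instead use $q/p = [\,n\pm1,\,4n+2k-1,\,1\,]$ or, if a longer expansion is forced by the $a_\ell>1$ convention applying to a length-$\ge 2$ requirement, rewrite as $[n\pm1, 4n+2k]$ and handle the continued fraction length carefully. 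The key arithmetic fact will be that the relevant last partial quotient equals $4n+2k$ (or $4n+2k-1$ depending on the normalization), whence $\lfloor(4n+2k-1)/2\rfloor + 1 = (2n+k-1)+1 = 2n+k$, giving stretch $2n+k$ and hence $\gamma_4 \ge 2n+k-1$. This matches the upper bound. The parity hypothesis ``$n$ even'' presumably enters either in making this floor computation come out exactly, or — more likely, given \Cref{rem:difference} — in ruling out the Möbius-band borderline case when $2n+k-1 = 1$; I would check the small cases by hand.

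Finally, for $\vartheta\left(T_{p,q}\right) = 2n+k$: the pinch number is computed by the recursive algorithm of Batson/Jabuka--Van Cott, and since $q \equiv 1 \pmod p$ the pinch sequence is essentially forced, so this should reduce to a short induction once the base case (Longo's $\vartheta(T_{4n,(2n\pm1)^2}) = 2n$) is cited. The main obstacle I anticipate is the continued-fraction bookkeeping in the lower bound: getting the normalization of $[a_0,\dots,a_\ell]$ exactly right (including the $a_\ell > 1$, $\ell \ge 2$ conventions) so that the Borodzik--Hom formula applies verbatim, and correctly treating both sign choices $n+1$ and $n-1$ simultaneously. Everything else is assembling known pieces: \Cref{thm:lowerbound} for the lower bound, pinch moves plus Longo's construction for the upper bound, and the recursive pinch-number formula for $\vartheta$.
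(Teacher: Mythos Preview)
Your upper-bound strategy and pinch-number computation are essentially the paper's (Proposition~2.9): perform $k$ pinch moves to reach $T_{4n,(2n\pm1)^2}$ and invoke Longo. One correction: a pinch move on $T_{p,kp+1}$ yields $T_{p-2,\,k(p-2)+1}$ (the first parameter drops by $2$), not $T_{p,q-p}$; see \Cref{lem:pinchmoves}. This does not affect the outcome, since after $k$ such moves one indeed lands on $T_{4n,4n(n\pm1)+1}=T_{4n,(2n\pm1)^2}$.

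The lower bound has a genuine gap. \Cref{thm:lowerbound} has the hypothesis that the constant term of $\Delta_K$ equals $-1$; the Borodzik--Hom continued-fraction formula you quote computes the stretch \emph{under that hypothesis}, it does not verify it. You never check this hypothesis, and it is precisely here that ``$n$ even'' enters. Writing $p=4n+2k$ (even) and $q=p(n\pm1)+1$, the parity of the constant term of $\Delta_{T_{p,q}}$ depends on the residue of $q$ modulo $2p$, not just modulo $p$. When $n$ is even, $n\pm1$ is odd and $q\equiv p+1\pmod{2p}$; the semigroup computation in \Cref{subsec:peven} then shows the constant term is $-1$ and the first positive-degree term is $t^{p/2}$, giving stretch $p/2=2n+k$ and hence $\gamma_4\ge 2n+k-1$. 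When $n$ is odd, $q\equiv 1\pmod{2p}$ and the constant term can be $+1$ (e.g.\ $T_{4,9}$), so \Cref{thm:lowerbound} does not apply. Your guesses about the floor function or a M\"obius borderline are not where the parity matters; the floor $\lfloor(4n+2k-1)/2\rfloor+1=2n+k$ is insensitive to the parity of $n$.

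The paper's route is thus: observe that for $n$ even the knot lies in the family $T_{p,(2k'+1)p+1}$ with $p$ even already treated in \Cref{subsec:peven} (which gives $\gamma_4\in\{2n+k-1,2n+k\}$ via the semigroup/Alexander-polynomial analysis), and then apply the upper bound from Proposition~2.9. If you want to salvage the continued-fraction approach, you must separately establish the constant-term condition---at which point you are essentially redoing \Cref{subsec:peven}.
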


Using \Cref{thm:lowerbound} and other existing results (see Section \ref{sec:background}), we can also find narrow bounds on $\gamma_4$ for certain infinite families of torus knots $T_{p,q}$ with fixed $p$.

\begin{thm} Fix $p>3$. If $q\equiv p-1, p+1,\text{ or }2p-1\pmod{2p},$ then $ \vartheta(T_{p,q})-1\le\gamma_4(T_{p,q})\le\vartheta(T_{p,q}).$ 

In particular, we have the following.
\begin{enumerate}[(a)]
    \item\label{item:p-1} Let $p$ be odd. 
    
    If $q\equiv p-1\pmod{2p},$ then $\gamma_4(T(p,q))\in\left\{\frac{p-3}{2},\frac{p-1}{2}\right\}$. 
    
    If $q\equiv p+1 \text{ or }2p-1 \pmod{2p}$, then $\gamma_4(T_{p,q})=\frac{p-1}{2}$. 
    \item \label{item:c} Let $p$ be even. 
    
    If $q>p$ and $q\equiv p-1, p+1\text{ or }2p-1\pmod{2p}$, then $\gamma_4\left(T_{p,q}\right)\in\left\{\frac{p-2}{2},\frac{p}{2}\right\}$.
    
    \end{enumerate}
\label{thm:evencalculations}
\end{thm}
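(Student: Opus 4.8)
The plan is to get the upper bound for free and to attack the lower bound with \Cref{thm:lowerbound}, supplementing it with classical and Heegaard Floer obstructions for the cases that bound does not reach. The upper bound $\gamma_4(T_{p,q})\le\vartheta(T_{p,q})$ is immediate, since Batson's pinch construction realizes a nonorientable surface in $B^4$ bounded by $T_{p,q}$ with first Betti number $\vartheta(T_{p,q})$. So I would first record the value of $\vartheta(T_{p,q})$ in the three residue classes: running pinch moves that drop the smaller torus parameter by $2$ at each stage -- ending at the unknot (capped by a disk) when that parameter becomes $1$, or at a $2$-strand torus knot bounding a M{\"o}bius band when it becomes $2$ -- shows $\vartheta(T_{p,q})=\lceil(p-1)/2\rceil$ for $q>p$, \ie $(p-1)/2$ for $p$ odd and $p/2$ for $p$ even; here I would cite the recursive formula of Batson and of Jabuka--Van Cott. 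This fixes the target $\gamma_4(T_{p,q})\ge\vartheta(T_{p,q})-1$.

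The main tool for the lower bound is \Cref{thm:lowerbound}, applicable because $T_{p,q}$ is an $L$-space knot. For each residue class I would read off the stretch of $T_{p,q}$ and the sign of the constant term of its symmetrized Alexander polynomial from the numerical semigroup $\langle p,q\rangle$ together with the Borodzik--Hom continued-fraction formula: when $q\equiv p-1$ or $2p-1\pmod{2p}$ and $q>p$ one has $q/p=[a_0,1,p-1]$, so the stretch is $\lfloor(p-2)/2\rfloor+1=\lceil(p-1)/2\rceil=\vartheta(T_{p,q})$, while when $q\equiv p+1\pmod{2p}$ one has $q/p=[a_0,p]$ and a direct semigroup computation gives stretch $\lceil p/2\rceil$. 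In every case where the constant term equals $-1$, \Cref{thm:lowerbound} then gives $\gamma_4(T_{p,q})\ge k-1=\vartheta(T_{p,q})-1$, which together with the trivial upper bound establishes the main inequality.

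Two families of cases remain. In some instances the constant term of $\Delta_{T_{p,q}}$ is $+1$ -- for example for $T_{p,p+1}$ with $p$ odd -- so that \Cref{thm:lowerbound} gives nothing; its sign is governed by a parity of the semigroup data that is not constant along a residue class mod $2p$, so this must be tracked case by case. Moreover, for the sharp claims $\gamma_4(T_{p,q})=(p-1)/2$ in part (a) ($p$ odd, $q\equiv p+1$ or $2p-1\pmod{2p}$), the stretch bound only yields $(p-3)/2$. In both families I would invoke one of the existing bounds recalled in \Cref{sec:background} -- the Gordon--Litherland/signature obstruction for nonorientable surfaces, Batson's $d$-invariant bound applied to the Brieskorn-sphere double branched cover of $T_{p,q}$, the Ozsv\'{a}th--Szab\'{o}--Stipsicz $\upsilon$-bound, or the linking-form obstruction developed elsewhere in this paper -- to recover $\gamma_4\ge\vartheta-1$ in general and, in the exact-value cases, to rule out a nonorientable surface of first Betti number $(p-3)/2$ and thereby force $\gamma_4(T_{p,q})=\vartheta(T_{p,q})=(p-1)/2$. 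Parts (a) and (b) then follow by substituting the values of $\vartheta$ from the first step; the cases with $q<p$ reduce to torus knots for which Batson's conjecture is already known, so they also lie in the claimed sets.

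The essentially bookkeeping difficulty is controlling the sign of the constant term of $\Delta_{T_{p,q}}$, which \Cref{thm:lowerbound} requires to be $-1$ but which varies within a single residue class mod $2p$. The genuinely hard part is the exact-value statements in (a)--(b): \Cref{thm:lowerbound} is provably not sharp there (cf.\ \Cref{rem:difference}), so the argument stands or falls on identifying and verifying a second obstruction -- signature-theoretic, $d$-invariant-theoretic, $\upsilon$-theoretic, or number-theoretic via linking forms -- that actually gives $\gamma_4\ge(p-1)/2$ for these torus knots rather than only $(p-3)/2$.
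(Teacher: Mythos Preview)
Your overall architecture matches the paper: upper bound via pinch moves, lower bound by combining \Cref{thm:lowerbound} with pre-existing obstructions. However, the division of labor is essentially inverted, and the part you leave unspecified is where the real content lies.

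In the paper, four of the six parity/residue combinations --- namely $p$ odd with $q\equiv p+1$ or $2p-1\pmod{2p}$, and $p$ even with $q\equiv p-1$ or $2p-1\pmod{2p}$ --- are handled \emph{entirely} by Jabuka--Van Cott's pinch-sign theorem (\Cref{thm:jabukavanCottpinch}/\Cref{cor:jabukavanCottpinch}), which packages the $\lvert\upsilon-\tfrac{\sigma}{2}\rvert$ bound. This requires verifying the sign pattern of the pinch moves via \Cref{lem:pinchmoves}, and in the odd-$p$ exact-value cases it yields $\gamma_4=(p-1)/2$ on the nose. \Cref{thm:lowerbound} is invoked \emph{only} for the two remaining cases ($p$ odd, $q\equiv p-1$; $p$ even, $q\equiv p+1$), precisely because the pinch moves there are all negative and Jabuka--Van Cott's formula collapses to $0$. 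So your menu of ``one of the existing bounds'' is not a detail to be filled in later: Batson's $d$-invariant bound and the linking-form obstruction are explicitly shown in the paper to be too weak for these families, and the $\upsilon$-bound works only through the Jabuka--Van Cott sign analysis, which you do not mention.

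Two smaller points. First, your claim that the sign of the constant term of $\Delta_{T_{p,q}}$ ``is not constant along a residue class mod $2p$'' is wrong: the semigroup computations in the paper show it is $-1$ throughout the class $q\equiv p-1\pmod{2p}$ for $p$ odd and throughout $q\equiv p+1\pmod{2p}$ for $p$ even (and one checks it is $+1$ throughout $q\equiv p+1\pmod{2p}$ for $p$ odd, which is why \Cref{thm:lowerbound} says nothing there). Second, for $q\equiv p+1$ the continued fraction $q/p=[a_0,p]$ has length $2$, so the Borodzik--Hom formula you cite (which needs $k\ge 2$) does not literally apply; the paper instead computes the stretch directly from the semigroup $\langle p,q\rangle$, and that is the cleanest way to proceed.
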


\begin{rem}
Notice that the torus knots in \Cref{prop:longoextension} belong to \Cref{thm:evencalculations}\ref{item:c}. In particular, \Cref{thm:evencalculations}\ref{item:c} tells us that $\gamma_4(T_{4n+2k,(4n+2k)(n\pm1)+1}) \in\{ 2n+k-1, 2n+k\}$. To prove the first equality in \Cref{prop:longoextension}, 
we will show that performing $k$ pinch moves on $T_{4n+2k,(4n+2k)(n\pm1)+1}$ yields $T_{4n,(2n\pm1)^2}$ and then apply \Cref{prop:longoanswer}.
\label{rem:specialcase}
\end{rem}

\begin{rem}
The lower bound on $\gamma_4$ given in the first part of \Cref{thm:evencalculations}\ref{item:p-1} is obtained using Theorem \ref{thm:lowerbound}. We
will see that other existing lower bounds involving knot invariants (see Section \ref{sec:background}) do not suffice to show this.
The second statement of \Cref{thm:evencalculations}\ref{item:p-1} follows from existing work (see Corollary \ref{cor:jabukavanCottpinch}). Similarly, the middle case of \Cref{thm:evencalculations}\ref{item:c} is proved using Theorem \ref{thm:lowerbound}, whereas the others follow from existing work (see Corollary \ref{cor:jabukavanCottpinch}).
\label{newandexisting}\end{rem}

\begin{rem} Consider the family of torus knots in Theorem \ref{thm:evencalculations}\ref{item:c} with $q=p+1$. These are of the form $T_{2n,2n+1}$. In \cite[Proposition 5.7]{feller2020nonorientable} it was shown that these torus knots do not bound a smooth M{\" o}bius band in $B^4$ if $n \equiv 3 \mod 4$, $2n+1$ is square-free, and $n+1$ is not a square. \Cref{thm:evencalculations}\ref{item:c} generalizes this result.
\end{rem}

\begin{rem}
Notice that \Cref{thm:evencalculations} includes the case $q=p-1$ when $p$ is odd, but not when $p$ is even. The latter case was solved by Batson \cite{batson2012nonorientable}; he showed that $\gamma_4\left(T_{p,p-1}\right)=\frac{p-2}{2}=\vartheta(T_{p,p-1})$ for even $p$.
\end{rem}

We now turn our attention to the family of torus knots $T_{4,q}$ for odd $q > 4$. 
We note that the bound, $2$, that \Cref{thm:evencalculations}\ref{item:c} yields is no better than the one obtained by purely diagrammatic methods in Lemma~\ref{pinchlemma}. However, we are able to get more mileage out of \Cref{thm:lowerbound} than was used in \Cref{thm:evencalculations}\ref{item:c} since we need only obstruct the existence of M{\"o}bius bands (c.f. Remark \ref{rem:difference}).

\begin{thm} 
If $q\equiv 5\text{ or }7\pmod 8$, then $\gamma_4(T_{4,q})=2$. Moreover, $T_{4,q}$ does not bound a M{\" o}bius band for almost all $q\equiv 1\text{ or }3\pmod{8}$.
\label{thm:4q}
\end{thm}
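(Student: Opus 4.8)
The plan is to obtain the upper bound $\gamma_4(T_{4,q})\le 2$ essentially for free, to settle the residues $q\equiv 5,7\pmod 8$ with \Cref{thm:lowerbound}, and to hand the residues $q\equiv 1,3\pmod 8$ over to the linking-form obstruction of \Cref{subsec:obstructmobius}. For the upper bound, observe that for every $q>4$ coprime to $4$ a single pinch move (in the sense of \Cref{sec:upperBounds}) reduces $T_{4,q}$ to a two-strand torus knot $T_{2,q'}$ with $q'$ odd; since $T_{2,q'}$ bounds a M\"obius band in $B^4$, this gives $\gamma_4(T_{4,q})\le\vartheta(T_{4,q})\le 2$. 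Hence the theorem reduces to deciding, for each residue of $q$ modulo $8$, whether $T_{4,q}$ bounds a M\"obius band.

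For $q\equiv 5$ or $7\pmod 8$ I would feed the Alexander polynomial of $T_{4,q}$ into \Cref{thm:lowerbound}; note that $T_{4,q}$ is an $L$-space knot, being a positive torus knot, and that its Alexander polynomial is automatically monic and symmetric. Let $S=\langle 4,q\rangle\subseteq\Z_{\ge 0}$ be the numerical semigroup generated by $4$ and $q$, and let $g=\tfrac{3(q-1)}{2}$ be the Seifert genus. One has the standard identity
\[
t^{g}\Delta_{T_{4,q}}(t)=\sum_{s\ge 0}\bigl(\mathbb{1}_S(s)-\mathbb{1}_S(s-1)\bigr)t^{s},
\]
obtained by expanding $\tfrac{1}{(1-t^{4})(1-t^{q})}$ as the generating function of the counting function of $S$. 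Thus in the symmetric normalization the constant term of $\Delta_{T_{4,q}}$ equals $\mathbb{1}_S(g)-\mathbb{1}_S(g-1)$, and its first positive-degree term sits in degree $\min\{j\ge 1:\mathbb{1}_S(g+j)-\mathbb{1}_S(g+j-1)\ne 0\}$. Since $S$ contains every non-negative multiple of $4$, while below $2q$ its only other elements are those of $q+4\Z_{\ge 0}$, and since $g,g\pm 1,g\pm 2$ all lie in $[0,2q)$ for $q\ge 5$, membership of $g$ and of $g\pm 1,g\pm 2$ in $S$ is decided by a short computation in terms of $q\bmod 8$: for $q\equiv 5$ or $7\pmod 8$ one finds $g-1\in S$, $g\notin S$, $g+1\notin S$ and $g+2\in S$, so the constant term of $\Delta_{T_{4,q}}$ is $-1$ and its first positive-degree term is $t^{2}$ (equivalently, the stretch is $2$). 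Since $k=2\ne 1$, \Cref{thm:lowerbound} shows that $T_{4,q}$ bounds no smoothly embedded M\"obius band, and together with the upper bound this yields $\gamma_4(T_{4,q})=2$.

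For $q\equiv 1$ or $3\pmod 8$ the same computation instead gives $g\in S$ — namely $g$ is a multiple of $4$ when $q\equiv 1$, and $g\in q+4\Z_{\ge 0}$ when $q\equiv 3$ — so the constant term of $\Delta_{T_{4,q}}$ is $+1$ and \Cref{thm:lowerbound} gives no information; this is consistent with the fact that $T_{4,9}$ and $T_{4,11}$ do bound M\"obius bands (cf.\ \Cref{rem:difference}). For these residues I would instead invoke the linking-form and number-theoretic obstruction of \Cref{subsec:obstructmobius}: since $4$ is even and $4/2=2$ is not a perfect square, $T_{4,q}$ fails to bound even a locally flat M\"obius band for all but finitely many $q$ coprime to $4$, and in particular for almost all $q\equiv 1$ or $3\pmod 8$; a fortiori it bounds no smooth M\"obius band for such $q$.

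The two genuinely difficult ingredients — \Cref{thm:lowerbound} and the linking-form obstruction — are used as black boxes, so the real work here is bookkeeping rather than analysis: one has to keep the four residue classes modulo $8$ separate and, in the two ``good'' cases, verify \emph{both} that the constant term of the Alexander polynomial equals $-1$ (the hypothesis of \Cref{thm:lowerbound}, which genuinely fails in the other two cases) \emph{and} that the stretch equals $2$ rather than $1$. The dichotomy between $q\equiv 5,7$ and $q\equiv 1,3$ is forced precisely by whether the Seifert genus $g$ lies in the semigroup $\langle 4,q\rangle$, which is also the reason the $q\equiv 1,3\pmod 8$ statement is only ``almost all'' and has to go through a different mechanism.
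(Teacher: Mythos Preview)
Your approach is essentially the paper's: upper bound via pinch moves, lower bound for $q\equiv 5,7\pmod 8$ via \Cref{thm:lowerbound}, and the density statement for $q\equiv 1,3\pmod 8$ via \Cref{asymptotic}. The only cosmetic difference is that the paper extracts the constant term and stretch of $\Delta_{T_{4,q}}$ by direct algebraic expansion of the product formula, while you use the semigroup description (which the paper itself uses for other families in \Cref{subsec:podd} and \Cref{subsec:peven}); either route works and gives the same conclusion.

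There is one slip you should fix. You write that \Cref{asymptotic} shows $T_{4,q}$ fails to bound a M\"obius band ``for all but finitely many $q$ coprime to $4$''. That is strictly stronger than what the theorem says: it only gives that the set of such $q$ has density zero, and in fact the paper explicitly leaves open whether infinitely many $q$ with $\gamma_4(T_{4,q})=1$ exist. Fortunately the density-zero statement is all you need and all the theorem claims; since each residue class $q\equiv 1$ or $3\pmod 8$ has positive density among odd integers, a set of odd integers with zero density also has zero density when restricted to either class. Just replace ``all but finitely many'' with ``a set of $q$ of density one'' and the argument is complete.
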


In \Cref{thm:4q}, since $q$ is relatively prime to 4, there are four cases to consider: $q\equiv 1,3,5,7\pmod 8$. The cases $q\equiv 5,7\pmod 8$ are easy to deal with: we first show that $\gamma_4(T_{4,q})\le\vartheta(T_{4,q})=2$; and then we use \Cref{thm:lowerbound} to show that $\gamma_4(T_{4,q})=2$. When $q\equiv 1,3 \pmod 8$, we again have that $\gamma_4(T_{4,q})\le\vartheta(T_{4,q})=2$, but the lower bound on $\gamma_4$ given by \Cref{thm:lowerbound} does not apply. Moreover,  other existing lower bounds involving knot invariants (see \Cref{sec:background} and \Cref{subsec:4q}) do not provide any additional information.
At this point, one might wonder whether \Cref{thm:4q} can be upgraded to $\gamma_4(T_{4,q})=2$ for all $q\equiv 1,3\pmod 8$, confirming Batson's conjecture for the torus knots $T_{4,q}$. It is known, however, that this is not the case --- as mentioned above, we have $\gamma_4(T_{4,9})=1$ \cite{lobb} and $\gamma_4(T_{4,11})=1$ \cite{tairi}. 

The issue with applying \Cref{thm:lowerbound} and the other existing lower bounds is that these lower bounds all use knot invariants  (\eg the Arf invariant, signature, upsilon invariant, etc.) that are periodic modulo $2p$ for $T_{p,q}$. Since $\gamma_4(T_{p,1})=\gamma_4(T_{p,3})=1$ for any $p$, this periodic behavior implies that all of these lower bounds on $\gamma_4(T_{p,q})$ are at most 1 for all $q\equiv 1\text{ or }3\pmod{2p}$. In particular we cannot use these lower bounds to obstruct $T_{4,q}$ from bounding a M{\"o}bius band for $q\equiv 1\text{ or }3\pmod 8$. Instead, to prove the second statement of Theorem \ref{thm:4q}, we consider the linking form of the double cover of $S^3$ branched over $T_{4,q}$ (and more generally branched over $T_{p,q}$ for $p$ even) and apply \Cref{asymptotic} below. We in fact prove something stronger, namely that these torus knots do not even bound \emph{locally flat} M{\"o}bius bands. This gives us information about the \emph{topological nonorientable $4$-ball genus} $\gammatop$ of $T_{4,q}$.  It is worth noting that this obstruction does not work if $p$ and $q$ are both odd; in this case, the double cover of $S^3$ branched over $T_{p,q}$, which is the Brieskorn sphere $\Sigma(2,p,q)$, is an integral homology sphere (as opposed to a rational homology sphere if $p$ or $q$ is even) and thus the linking form obstruction vanishes. 

\subsection{Obstructing Möbius bands in the topological category}\label{subsec:obstructmobius}

For a knot $K$ in $S^3$, let $\gammatop(K)$ denote the minimal first Betti number of any locally flat nonorientable surface in $B^4$ bounding $K$; note that $\gammatop(K) \leq \gamma_4(K)$ for any knot $K$. 
Gilmer and Livingston \cite[Theorem D]{gilmerlivingston} showed that there exist knots $K$ with $\gammatop(K)=1$, but $\gamma_4(K) > 1$. They found these knots by taking the connected sum of a topologically slice knot with a knot $K$ satisfying $\gammatop(K) = \gamma_4(K) =1$. In fact, the nonorientable $4$-ball genus of topologically slice knots can be arbitrarily large \cite{fellerparkray}.

In \cite[Theorem 1.3]
{feller2020nonorientable}, Feller and Golla showed that  $\gamma_4^{top}(T_{2n-1, 2n}) \leq n-2 < n-1= \gamma_4(T_{2n-1, 2n})$ for all $n \geq 5$; the latter equality is due to Batson \cite{batson2012nonorientable}. Moreover, they provided new infinite families of knots not bounding locally flat Möbius bands and were particularly interested in the question of whether there exists a torus knot $T_{p,q}$ with $\gammatop(T_{p,q})=1$ and $\gamma_4(T_{p,q}) > 1$. To the best of the authors' knowledge, this question still remains open.

In \Cref{sec:asymp}, using a bound of Murakami and Yasuhara \cite[Theorem 2.5]{murakami2000four} we prove that torus knots which bound locally flat M{\" o}bius bands are very rare: when $p$ is even and $\frac{p}{2}$ is not a square, the set of odd integers $q$ satisfying $\gammatop(T_{p,q})=1$ has ``zero density''. In particular, we prove the following theorem.

\begin{thm}
\label{asymptotic}
Let $p$ be a positive even integer such that $\frac{p}{2}$ is not a perfect square. Given a positive integer $N$, consider the set $S_{N,p}=\{n\in \mathbb{Z}\,\vert\,1\le n\le N,\,\gcd(p,n)=1\}$. Then 
\[
\lim_{N\rightarrow\infty} \frac{\sharp\left\{q\in S_{N,p} \,\vert\, T_{p,q}\text{ bounds a locally flat M{\" o}bius band in }B^4\right\}}{\sharp S_{N,p}} = 0.
\]
\end{thm}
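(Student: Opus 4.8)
The plan is to convert the condition ``$T_{p,q}$ bounds a locally flat M\"obius band in $B^4$'' into an elementary quadratic-residue condition on $q$, and then to show by a sieve that this condition holds only for a density-zero set of $q$. Write $Y_{p,q}$ for the double branched cover $\Sigma(2,p,q)$ of $S^3$ along $T_{p,q}$. Since $p$ is even and $q$ is odd, the formula $\Delta_{T_{p,q}}(t)=\tfrac{(t^{pq}-1)(t-1)}{(t^p-1)(t^q-1)}$ gives $\det(T_{p,q})=|\Delta_{T_{p,q}}(-1)|=q$, so $H_1(Y_{p,q};\Z)\cong\Z/q$ and its linking form $\lambda_{p,q}$ is a nondegenerate form on $\Z/q$.

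First I would apply the Murakami--Yasuhara bound \cite[Theorem 2.5]{murakami2000four}. Specialized to M\"obius bands, it gives: if $T_{p,q}$ bounds a locally flat M\"obius band, then $H_1(Y_{p,q};\Z)$ admits a presentation by a $1\times 1$ symmetric integer matrix $(m)$; comparing orders forces $|m|=q$, and hence $\lambda_{p,q}\cong\langle 1/q\rangle$ or $\lambda_{p,q}\cong\langle -1/q\rangle$. On the other hand, the linking form of $\Sigma(2,p,q)$ can be computed directly --- e.g. from a Goeritz matrix of the standard diagram of $T_{p,q}$, from the Seifert form, or from the plumbing description of $\Sigma(2,p,q)$ --- and it is isomorphic to $\langle \tfrac p2 /q\rangle$. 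So bounding a locally flat M\"obius band would force $\langle \tfrac p2 /q\rangle\cong\langle \pm 1/q\rangle$ for one of the two signs; since $q$ is odd, being a square in $(\Z/q)^{\times}$ is detected prime-by-prime, so this says exactly that $\tfrac p2$ or $-\tfrac p2$ is a quadratic residue modulo every prime $\ell$ dividing $q$. We never need to know which sign occurs: we simply bound both cases.

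Next I would run the sieve, separately for each sign $\epsilon\in\{+1,-1\}$. Write $\tfrac p2=a^2 b$ with $b$ squarefree; the hypothesis that $\tfrac p2$ is not a perfect square means $b>1$, so $\epsilon b$ is not a perfect square and $\ell\mapsto\left(\tfrac{\epsilon b}{\ell}\right)$ is a nonprincipal quadratic character. Hence the set $\mathcal B_\epsilon=\{\ell_1<\ell_2<\cdots\}$ of primes $\ell\nmid 2pb$ with $\left(\tfrac{\epsilon b}{\ell}\right)=-1$ has natural density $\tfrac12$ among primes, so $\sum_{\ell\in\mathcal B_\epsilon}\tfrac1\ell$ diverges. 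By inclusion--exclusion, for each $k$ the number of $q\le N$ divisible by none of $\ell_1,\dots,\ell_k$ is $N\prod_{i=1}^{k}(1-\tfrac1{\ell_i})+O(2^k)$; letting $N\to\infty$ and then $k\to\infty$ shows that the density of integers with no prime factor in $\mathcal B_\epsilon$ equals $\prod_{\ell\in\mathcal B_\epsilon}(1-\tfrac1\ell)=0$. Any $q$ for which $\epsilon\tfrac p2$ is a quadratic residue modulo all $\ell\mid q$ has no prime factor in $\mathcal B_\epsilon$, so taking the union over the two signs gives $\sharp\{q\le N:T_{p,q}\text{ bounds a locally flat M\"obius band}\}=o(N)$. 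Dividing by $\sharp S_{N,p}$, which is $\sim N\prod_{\ell\mid p}(1-\tfrac1\ell)\gg N$, yields the limit $0$.

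The step I expect to be the main obstacle is the translation, not the number theory: one must check that \cite[Theorem 2.5]{murakami2000four} really yields the sharp conclusion $\lambda_{p,q}\cong\langle\pm 1/q\rangle$ (in particular, dispensing with any normal-Euler-number or signature side condition that accompanies it), and one must compute the linking form of $\Sigma(2,p,q)$ precisely enough to recognize its isomorphism class as $\langle\tfrac p2 /q\rangle$. This computation is also what pins down the correct hypothesis: ``$p$ is not a perfect square'' would be automatic for even $p$, so the factor of $2$ in $\Sigma(2,p,q)$ must enter the linking form through $\tfrac p2$ rather than $p$, and it is then exactly the assumption that $\tfrac p2$ is not a perfect square that makes $\pm\tfrac p2$ non-squares and hence the sieve go through. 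Once the obstruction is in the shape ``$\pm\tfrac p2$ is a quadratic residue mod $q$'', the density statement is routine.
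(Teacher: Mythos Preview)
Your overall strategy---compute the linking form of $\Sigma(2,p,q)$, apply Murakami--Yasuhara to reduce to a quadratic-residue constraint on $q$, then sieve---is exactly the paper's. However, your application of Murakami--Yasuhara contains a gap that you yourself flag as a worry in your last paragraph, and the worry is justified: the theorem does \emph{not} say that $H_1(\Sigma_2(K))$ is presented by a $1\times 1$ integer matrix. It says the linking form splits as $(G_1,\lambda_1)\oplus(G_2,\lambda_2)$ where $\lambda_1$ is presented by a $1\times 1$ matrix and $\lambda_2$ is \emph{metabolic}; the summand $G_2$ can be nontrivial (its order is merely a perfect square). So ``comparing orders forces $|m|=q$'' is unwarranted, and your inclusion $\{q:T_{p,q}\text{ bounds a M\"obius band}\}\subseteq\{q:\text{no prime factor in }\mathcal{B}_\epsilon\}$ is not established: a $q$ with a prime factor $\ell\in\mathcal{B}_\epsilon$ appearing to an even power could, a priori, still bound if the $\ell$-primary part of the linking form sits inside $G_2$.

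The repair, which the paper carries out, is to note that since $|G_2|$ is a perfect square, every \emph{odd-power} prime factor of $q$ must divide $|G_1|$, and hence the quadratic-residue constraint applies to those primes only. The sieve then runs over integers whose odd-power prime factors avoid the bad set; the relevant density is $\prod_{\ell}\tfrac{\ell}{\ell+1}$ rather than $\prod_{\ell}(1-\tfrac{1}{\ell})$, but it still vanishes because $\sum 1/\ell$ diverges. A minor stylistic difference: rather than sieving twice and taking a union over the two signs, the paper locates a single residue class $r\pmod{2p}$ with $r\equiv 1\pmod 4$ and $\chi_{2p}(r)=-1$, so that primes $s\equiv r$ have $\bigl(\tfrac{-1}{s}\bigr)=1$ and hence \emph{both} $\pm\tfrac{p}{2}$ are nonresidues mod $s$ simultaneously.
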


\begin{rem}
The proof of \Cref{asymptotic} actually shows the following: if $p$ is even and $\frac{p}{2}$ is not a perfect square, then there exists an integer $r$, relatively prime to $p$, such that if $q$ admits a prime factor $s$ with odd exponent in the prime factorization of $q$ and $s \equiv r \pmod{2p}$,
then $T_{p,q}$ does not bound a locally flat Möbius band in $B^4$ (see \Cref{torusobstlem}). For example, if $q\equiv 1\text{ or }3\pmod 8$ and $q$ has a prime factor $s\equiv 5\pmod 8$ such that the exponent of $s$ in the prime factorization of $q$ is odd, then $T_{4,q}$ does not bound a locally flat Möbius band (see Example \ref{exmp:4q} for details). Notice that the smallest values of $q$ satisfying this condition are $q=35, 65, 91, 105$. Thus $T_{4,35}$, $T_{4,65}$, $T_{4,91}$, and $T_{4,105}$ do not bound locally flat M{\"o}bius bands; moreover, for any $q\equiv 1\text{ or }3\pmod 8$ with $q<105$ and $q\neq 35,65,91$, it remains unknown whether the torus knot $T_{4,q}$ bounds a locally flat M{\"o}bius band.\label{rem:primefactors}
\end{rem}

\subsection{New lower bound on the smooth nonorientable 4-ball genus}\label{subsec:lowerbound}
In the smooth category, our new lower bound (see \Cref{thm:lowerbound}) comes from a construction which amalgamates involutive Heegaard Floer homology \cite{hendricks2017involutive} and unoriented knot Floer homology \cite{fan2019unoriented}. Given a knot $K$ in $S^3$, we construct a module over the ring $\mathbb{F}_{2}[U,Q]/\left(Q^2\right)$, which we call the \emph{involutive unoriented knot Floer homology} $\uHFKI(K)$. Using the structure of $\uHFKI(K)$, we can define two concordance invariants $\bar{\upsilon}$ and $\ubar{\upsilon}$, which are the values at $1$ of the \emph{involutive Upsilon invariants} $\bar{\Upsilon}_t$ and $\ubar{\Upsilon}_t$ defined by Hogancamp and Livingston \cite{hogancamp2017involutive}.

Although $\uHFKI$ itself does not seem to satisfy the strong functoriality properties that Heegaard Floer theory and its variants satisfy, we can still show that $\bar{\upsilon}$ and $\ubar{\upsilon}$ behave nicely under nonorientable cobordisms. In particular, we have the following results, which can be seen as analogues of the oriented involutive genus bound given by Juh\'{a}sz and Zemke in \cite{juhasz2020concordance}.

\begin{thm}
\label{mobiusthm}
Let $K$ be a knot in $S^3$ which bounds a smooth M{\" o}bius band in $B^4$. Then 
$\bar{\upsilon}(K)-\ubar{\upsilon}(K)\le 1$.
\end{thm}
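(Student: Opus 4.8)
The plan is to adapt the strategy of Juh\'asz--Zemke \cite{juhasz2020concordance} for the oriented involutive genus bound to the nonorientable, unoriented setting. The starting point is that a smooth M\"obius band $F$ in $B^4$ bounded by $K$ can be decapped: removing a small disk from $F$ and turning the picture around produces a smoothly embedded, connected, nonorientable cobordism in $S^3 \times [0,1]$ from $K$ to the unknot $U$ whose first Betti number is $1$ (equivalently, one nonorientable band move followed by birth/death moves). So it suffices to show that a single nonorientable band cobordism changes each of $\bar\upsilon$ and $\ubar\upsilon$ by a controlled amount, and that their difference is then pinched between the values on $K$ and on the unknot, where $\bar\upsilon(U)=\ubar\upsilon(U)=0$. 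Concretely, I would prove: (i) if $K$ and $K'$ differ by a single nonorientable band move, then there are maps on $\uHFKI$ induced by the cobordism that shift the relevant filtration/gradings by a bounded amount; (ii) these maps commute appropriately with the involution used to build $\uHFKI$; (iii) tracking the effect on the $\bar\Upsilon_t, \ubar\Upsilon_t$ functions at $t=1$ gives the inequality $\bar\upsilon(K)-\ubar\upsilon(K)\le \bar\upsilon(U)-\ubar\upsilon(U)+1 = 1$.

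The first step is to set up, for a nonorientable band cobordism $W\colon (S^3,K)\to (S^3,K')$, a chain map on the unoriented complexes $\uCFK$ in the spirit of Fan's unoriented TQFT \cite{fan2019unoriented}, and then promote it to a map on $\uHFKI$ that intertwines (up to filtered chain homotopy) the $\iota$-like involution. Here the key technical input is that band moves in the unoriented theory are handled by the variable $Q$ with $Q^2=0$: a single nonorientable band changes the $\delta$- or Maslov-type grading by one, which is precisely why the bound is $+1$ rather than something sharper. The second step is a grading/filtration bookkeeping argument: $\bar\upsilon$ and $\ubar\upsilon$ are read off from where certain $U$-towers (the ``most positive'' and ``most negative'' surviving classes under the involution) sit, so a degree-$\le 1$ shift of the cobordism map translates into a $\le 1$ shift in each of $\bar\upsilon,\ubar\upsilon$, and—crucially—into a $\le 1$ shift in the \emph{difference}. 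The third step is to compose the two halves of the decapped M\"obius band and use that the unknot's invariants vanish.

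I expect the main obstacle to be establishing enough functoriality for $\uHFKI$ to make the argument go through, given the caveat flagged in the excerpt that ``$\uHFKI$ itself does not seem to satisfy the strong functoriality properties that Heegaard Floer theory and its variants satisfy.'' So the real content will be showing that, even though the full package of naturality maps may fail, the \emph{specific} maps induced by an elementary nonorientable band cobordism (together with births and deaths) exist, are filtered of bounded degree, and are compatible enough with the involution to control $\bar\upsilon$ and $\ubar\upsilon$ simultaneously. In practice I would isolate this as a lemma: ``a nonorientable band cobordism induces a pair of maps $\uHFKI(K)\to \uHFKI(K')$ and $\uHFKI(K')\to \uHFKI(K)$, each homotopy-commuting with the involution and shifting the Alexander/algebraic filtration by at most $1$,'' and then the passage to $\bar\upsilon,\ubar\upsilon$ is a formal consequence of how the involutive Upsilon functions of Hogancamp--Livingston \cite{hogancamp2017involutive} are extracted from this structure. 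The birth and death moves contribute nothing to the difference (they change $\bar\upsilon$ and $\ubar\upsilon$ in the same way, if at all), which is what lets the single band's $+1$ be the whole story.
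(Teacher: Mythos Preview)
Your outline has the right shape---produce cobordism maps on $\uCFK$ via Fan's TQFT, promote them to maps on $\uHFKI$, and read off bounds on $\bar\upsilon,\ubar\upsilon$---but it is missing the one idea that makes the M\"obius case give $+1$ rather than the weaker $+2$ of \Cref{highergenusthm}. The point is not that the cobordism map ``homotopy-commutes with the involution up to a degree-$1$ shift''; it is that for a M\"obius band the involution is absorbed \emph{exactly}. Recall that $\iota_K=\tau_K\circ\eta_K$ on $\uCFK$, where $\tau_K$ is the half-twist cobordism map. The key topological fact (the paper's \Cref{Mobiuscomm}) is that precomposing a M\"obius band cobordism $(M,a)$ with the half-twist cylinder is \emph{isotopic} to $(M,-a)$: sliding the extra half-twist around the band undoes itself because the band is one-sided. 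Together with the general conjugation lemma $F_{(S,-a)}\circ\eta\sim\eta\circ F_{(S,a)}$ (\Cref{conjugationcomm}), this yields $F_{(M,a)}\circ\iota_K\sim F_{(M,a)}$ and $\iota_K\circ F_{(-M,a)}\sim F_{(-M,a)}$ on the nose, so the maps descend to the mapping cones without any loss. For surfaces of higher first Betti number this isotopy fails; one only gets $U\cdot(F\circ\tau_K)\sim U\cdot F$ (\Cref{highergenuscomm}), and that extra factor of $U$ is precisely what costs you $+1$ in the bound. Your sketch, which treats the band move as generically shifting filtrations by $\le 1$ and hopes the involution is ``compatible enough'', would not distinguish these two situations and would at best reproduce the weaker inequality.

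Two smaller corrections. First, the $+1$ does not come from the relation $Q^2=0$ or from a grading shift of the band map per se; it comes from the grading computation $\mathbf{gr}_\delta(F_{(-M,a)})+\mathbf{gr}_\delta(F_{(M,a)})=-1$, obtained by composing $(M,a)$ with its reverse and the genus-one orientable tube $T$ (so that the composite is $U^{b-m+1}\cdot\mathbf{id}$ after localization) and using $U^{-1}F_T\sim U\cdot\mathbf{id}$. Second, you do not need to decompose into elementary band moves plus births/deaths and argue that births/deaths ``contribute nothing to the difference''; the paper works with the M\"obius band in $B^4$ as a single disoriented cobordism to the empty link (punctured to a cobordism to the unknot), and the grading accounting is global. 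Once you have the exact intertwining with $\iota_K$ and the grading identity above, the inequalities $\bar\upsilon(K)\le -\mathbf{gr}_\delta(F_{(M,a)})$ and $\ubar\upsilon(K)\ge \mathbf{gr}_\delta(F_{(-M,a)})$ follow directly from the definitions by pushing the relevant $U$-nontorsion classes through $F$ and $G$.
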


\begin{thm}
\label{highergenusthm}
For any knot $K$ in $S^3$, we have 
\[
\begin{split}
    \bar{\upsilon}(K) - \upsilon(K) &\le \gamma_4(K)+1,\\
    \upsilon(K) - \ubar{\upsilon}(K) &\le \gamma_4(K)+1,\\
    \bar{\upsilon}(K) - \ubar{\upsilon}(K) &\le \gamma_4(K)+2.
\end{split}
\]
 Here, $\upsilon(K)$ is the upsilon invariant defined in \cite{ozsvath2017unoriented}.
\end{thm}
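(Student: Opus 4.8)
The plan is to adapt the oriented involutive cobordism bound of Juh\'asz--Zemke \cite{juhasz2020concordance} to the unoriented setting: decompose a minimal nonorientable surface bounded by $K$ into elementary band moves, and track how the unoriented Upsilon function of \cite{ozsvath2017unoriented} and the involutive Upsilon functions $\bar{\Upsilon}_t,\ubar{\Upsilon}_t$ of \cite{hogancamp2017involutive} change across each band, keeping careful account of the bounded defect contributed by the involution (which is needed precisely because $\uHFKI$ is only partially functorial).

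First, the topological reduction. Suppose $\gamma_4(K)=g$ and let $F\subset B^4$ be a nonorientable surface with $\partial F=K$ and $b_1(F)=g$. Discarding any closed component can only decrease $b_1(F;\QQ)$ and does not affect $\partial F$, so by minimality we may take $F$ connected. Then $F$ is abstractly a $2$-disk with $g$ bands ($1$-handles) attached, and it is standard --- this is the mechanism underlying pinch moves and the upper bounds of \Cref{sec:upperBounds} --- that such a surface exhibits $K$ as being transformed into the unknot $U$ by a sequence of $g$ (not necessarily coherent) band moves $K=K_0\rightsquigarrow K_1\rightsquigarrow\cdots\rightsquigarrow K_g=U$, followed by a single cap closing off $U$. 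The intermediate $K_i$ may be links rather than knots; this is harmless, since only the composite cobordism map is used (alternatively, one extends the relevant invariants to links).

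Second, the algebraic input. For each saddle $K_i\rightsquigarrow K_{i+1}$ there is a band-move map on the underlying unoriented knot Floer complexes coming from the skein construction of Ozsv\'ath--Stipsicz--Szab\'o \cite{ozsvath2017unoriented} and Fan \cite{fan2019unoriented}; the crucial point is to promote this to a map that controls the involution. I would verify that these saddle maps, together with the cap map, commute with the involution $\iota$ up to an explicit chain homotopy whose grading shift is bounded \emph{independently of $g$}, just as in the oriented story, and then feed this into the definitions of $\uHFKI$ and of $\bar{\Upsilon}_t,\ubar{\Upsilon}_t$. For a single saddle one expects the usual estimate that the unoriented Upsilon function changes by at most $t$, with the same estimate for $\bar{\Upsilon}_t$ and $\ubar{\Upsilon}_t$ up to the homotopy defect; combining this with the comparison maps $\uCFK\to\uCFKI\to Q\cdot\uCFK$, which relate $\Upsilon_t$ to $\bar{\Upsilon}_t$ and $\ubar{\Upsilon}_t$ and, in the presence of a cobordism, supply an additional unit of slack, one is led to functional inequalities with right-hand sides $(g+1)t$, $(g+1)t$, and $(g+2)t$. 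Evaluating at $t=1$ and using that the Upsilon functions of the unknot vanish identically gives
\[
\begin{split}
\bar{\upsilon}(K) - \upsilon(K) &\le g+1,\\
\upsilon(K) - \ubar{\upsilon}(K) &\le g+1,\\
\bar{\upsilon}(K) - \ubar{\upsilon}(K) &\le g+2,
\end{split}
\]
which is the theorem since $g=\gamma_4(K)$. Note that this argument is logically independent of \Cref{mobiusthm}: applied to a M\"obius band it yields only $\bar{\upsilon}-\ubar{\upsilon}\le 3$, whereas the single-band case admits the sharper, more hands-on analysis of \Cref{mobiusthm} (cf. \Cref{rem:difference}).

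The main obstacle is exactly the promotion step. Since $\uHFKI$ lacks the full functoriality enjoyed by Heegaard Floer homology and its standard variants, the saddle and cap maps, and the chain homotopies witnessing near-$\iota$-equivariance, must be built by hand from the underlying complexes, and one must check that the associated grading shifts are bounded by an absolute constant rather than by a quantity growing with $g$. This is also where the asymmetry between the ``$+1$'' and ``$+2$'' constants must be located: most plausibly the cap map, or one of the two comparison maps $\uCFK\rightleftarrows\uCFKI$, carries a single unit of defect, so that crossing it once costs ``$+1$'' while relating $\bar{\upsilon}$ to $\ubar{\upsilon}$ crosses the whole involutive structure and costs ``$+2$''. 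Once these maps and homotopies are in place, composing the $g$ saddle maps with the cap map and reading off the effect on the piecewise-linear weight functions defining $\Upsilon_t$, $\bar{\Upsilon}_t$, and $\ubar{\Upsilon}_t$ is routine bookkeeping.
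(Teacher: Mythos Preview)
Your proposal identifies the right target --- near-$\iota$-equivariance of the cobordism map with a grading defect bounded independently of $g$ --- but the strategy you outline does not deliver it, and the gap is exactly at the point you flag as ``the main obstacle.'' The band-by-band decomposition works against you here: if you track the involutive data across $g$ individual saddles, each saddle will in general contribute its own homotopy defect, and you have given no mechanism for preventing these from accumulating. Absent such a mechanism you would get $\bar{\upsilon}-\upsilon \le Cg$ for some constant $C>1$, not $g+1$. Saying the defect ``should'' be bounded independently of $g$ is the whole content of the theorem.

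The paper takes a different route that sidesteps this accumulation problem entirely. It works with the single cobordism map $F_{(S,a)}$ of the \emph{entire} surface in Fan's unoriented TQFT, and proves (Lemma~\ref{highergenuscomm}) that $U\cdot F_{(S,a)}\circ\tau_K \sim U\cdot F_{(S,-a)}$: a single factor of $U$ suffices to make the whole map commute with the half-twist, regardless of $g$. The argument uses bypass relations to slide the dividing arc across the surface globally, with Lemma~\ref{closedcomp} controlling the terms that arise; conjugation-equivariance (Lemma~\ref{conjugationcomm}) is exact. Thus $U\cdot F_{(S,a)}$ and $U\cdot F_{(-S,a)}$ descend to $\uHFKI$, each with grading shift off by exactly $1$ from the non-involutive map. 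The ``$+1$'' in the first two inequalities is precisely this single $U$, and the ``$+2$'' is the sum of two such shifts; there is no per-band contribution.

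A secondary issue: your intermediate $K_i$ are links, but $\iota_K$, $\bar{\upsilon}$, $\ubar{\upsilon}$ are defined only for knots in the paper. Dismissing this as ``harmless'' is optimistic --- extending the involutive structure to multi-component links with the correct grading behavior is nontrivial and is avoided entirely by the paper's global approach.
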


In \Cref{sec:involutive}, we will prove \Cref{mobiusthm} and \Cref{highergenusthm} and use them to derive \Cref{thm:lowerbound}.

\subsection{Questions} Recall that a torus knot of the form $T_{4,q}$ bounds a  M\"{o}bius band when $q=1,3,9,11$. \Cref{thm:4q} eliminates the cases $q \equiv 5,7 \pmod 8$ in the smooth category and tells us that that the set of all $q$ such that $T_{4,q}$ bounds a locally flat M\"{o}bius band has zero density, but there can still be infinitely many $q$ such that $T_{4,q}$ bounds a M\"{o}bius band in either smooth or topological category (see \Cref{rem:primefactors}). Thus the following question naturally arises.

\begin{qn} Does there exist an integer $q\not\in\{1,3,9,11\}$ such that $\gamma_4(T_{4,q})=1$? If such an integer $q$ exists, then are there infinitely many such integers $q$?\end{qn}

Moreover, all known counterexamples to Batson's conjecture are of the form $T_{p,q}$ where $p$ is even. No example with $p$ and $q$ both odd is currently known. We thus ask the following question.

\begin{qn} Do there exist relatively prime odd integers $p$ and $q$ such that $\gamma_4(T_{p,q})<\vartheta(T_{p,q})$?\end{qn}

In the topological category, \Cref{asymptotic} tells us that for any even $p$ with $\frac{p}{2}$ not a perfect square, $T_{p,q}$ does not bound a locally flat M\"{o}bius band for almost all $q$. However, this obstruction does not imply that the set of $q$ such that $T_{p,q}$ bounds a locally flat M\"{o}bius band is finite. In addition, the case when $\frac{p}{2}$ is a perfect square remains mysterious. Hence one can ask the following question; note that, when $p$ is odd, $\gamma_4(T_{p,q})=\vartheta(T_{p,q})=1$ whenever $q \equiv \pm 2 \pmod p$ (Proposition 1.6 in \cite{jabuka2019nonorientable}).

\begin{qn} Is there an even integer $p\ge 4$ such that $T_{p,q}$ bounds a locally flat M\"{o}bius band in $B^4$ for infinitely many integers $q$?\end{qn}

\subsection*{Organization.} This article is organized as follows. 
In \Cref{sec:background}, we recall some previously known results regarding lower and upper bounds on the nonorientable $4$-ball genus before proving \Cref{prop:longoextension} and Theorems \ref{thm:evencalculations} and \ref{thm:4q} in \Cref{sec:calculations}.
In \Cref{sec:involutive}, we develop a theory of involutive unoriented knot Floer homology and prove Theorems \ref{mobiusthm}, \ref{highergenusthm}, and \ref{thm:lowerbound}.
Lastly, in \Cref{sec:asymp}, we compute the linking form of $\Sigma(2,p,q)$, which is the branched double cover of $S^3$ along $T_{p,q}$, when $p$ is even, and use it to prove \Cref{asymptotic}.

\subsection*{Acknowledgements.} This work is the product of a research group formed under the auspices of the American Institute for Mathematics (AIM) in their virtual research community on 4–dimensional topology. We are grateful to AIM, and especially to the program organizers Miriam Kuzbary, Maggie Miller, Juanita Pinz\'{o}n-Caicedo, and Hannah Schwartz. 
The authors thank Peter Feller, Marco Golla, Stanislav Jabuka 
and the anonymous referees
for their useful comments on a draft of this paper.
The second author thanks Seungwon Kim for helpful conversations and acknowledges support by the
Institute for Basic Science (IBS-R003-D1). The fourth author was supported by the Swiss National Science Foundation Grant 181199.

\section{Background on the nonorientable $4$-ball genus}\label{sec:background}

In this section, we give an overview of the lower and upper bounds on the nonorientable $4$-ball genus provided in the literature, including a review of pinch moves, the pinch number, and related computations. 

\subsection{Obstructions for the existence of smooth nonorientable surfaces}

Even before the notion of nonorientable $4$-ball genus was born, Viro \cite{viro} proved that the figure-eight knot cannot bound a smoothly embedded M{\" o}bius band in $B^4$ using Witt classes of intersection forms of branched covers of $B^4$ branched over nonorientable surfaces. 

In \cite{Yasuhara}, Yasuhara formulated an obstruction using the classical signature $\sigma(K)$ and the Arf invariant $\arf(K)$ of a knot $K$ which can be stated as follows. 
\begin{lem}[{\cite[Proposition 5.1]{Yasuhara}}]\label{lem:yasuhara}
Let $K$ be a knot in $S^3$ that bounds a smoothly embedded M{\" o}bius band in $B^4$. Then
$
\sigma(K) + 4 \arf(K) \equiv 0 \text{ or } \pm 2 \pmod 8.
$
\end{lem}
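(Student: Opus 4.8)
The plan is to pass to the double cover of $B^4$ branched along the Möbius band and then combine a Gordon--Litherland-type signature formula with the Guillou--Marin congruence. Suppose $F\subset B^4$ is a smoothly embedded Möbius band with $\partial F=K$, and let $W=\Sigma_2(B^4,F)$ be the double cover of $B^4$ branched along $F$, so that $\partial W=\Sigma_2(K)=:Y$, a rational homology sphere (since $\det K=|\Delta_K(-1)|$ is odd, hence nonzero). One computes $\chi(W)=2\chi(B^4)-\chi(F)=2$; combining this with $b_0(W)=1$, $b_4(W)=0$, Poincar\'e--Lefschetz duality for a $4$-manifold with rational homology sphere boundary, and the standard fact that $H_1(W;\QQ)=0$, forces $b_2(W;\QQ)=1$, so $|\sigma(W)|\le 1$. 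Moreover there is a $4$-dimensional Gordon--Litherland formula of the shape $\sigma(W)=\sigma(K)+\tfrac{\varepsilon}{2}\,e(F)$ for a sign $\varepsilon\in\{\pm1\}$ fixed by orientation conventions, where $e(F)\in\Z$ is the normal Euler number of $F$; equivalently $\tfrac12 e(F)\equiv \varepsilon\bigl(\sigma(W)-\sigma(K)\bigr)\pmod{8}$.

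Next I would close $F$ up into $S^4$. Choose a genus-$g$ Seifert surface $\Sigma$ for $K$, push its interior into a second copy $-B^4$ of the ball, and set $\hat F:=F\cup_K\Sigma\subset S^4=B^4\cup_{S^3}(-B^4)$, a smoothly embedded closed nonorientable surface with normal Euler number $e(\hat F)=e(F)$ (the pushed-in Seifert surface contributes $0$). Since $H_2(S^4;\Z/2)=0$, the class of $\hat F$ is characteristic, so the Guillou--Marin congruence gives $0=\sigma(S^4)\equiv e(\hat F)+2\beta(\hat F)\pmod{16}$, where $\beta(\hat F)\in\Z/8$ is the Brown invariant of the Guillou--Marin quadratic form $q\colon H_1(\hat F;\Z/2)\to\Z/4$. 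Because the core of $F$ can be isotoped off every class carried by $\Sigma$, the form splits orthogonally as $q|_{H_1(\Sigma;\Z/2)}\oplus q|_{H_1(F;\Z/2)}$. On the pushed-in Seifert surface $q|_{H_1(\Sigma;\Z/2)}$ is twice the usual quadratic refinement of the mod $2$ Seifert form, so its Brown invariant is $4\arf(K)\pmod 8$; on $H_1(F;\Z/2)\cong\Z/2$, generated by the core of the Möbius band, the value of $q$ is odd, so the corresponding Brown invariant is $\pm1$. Hence $\beta(\hat F)\equiv 4\arf(K)\pm1\pmod 8$, and the congruence rearranges to $\tfrac12 e(F)\equiv -4\arf(K)\mp1\pmod 8$.

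Combining the two displayed congruences and using $-4\arf(K)\equiv 4\arf(K)\pmod 8$ yields, regardless of the sign $\varepsilon$, that $\sigma(K)+4\arf(K)\equiv \sigma(W)\pm1\pmod 8$. Finally, $\sigma(K)$ is even: writing $\sigma(K)=\operatorname{sign}(V+V^{T})$ for a Seifert matrix $V$, the matrix $V+V^{T}$ is a nonsingular symmetric integral form of even rank $2g$ (nonsingular since $\det(V+V^{T})=\pm\det K\neq 0$), and the signature of a nonsingular form has the parity of its rank. Hence the left-hand side is even, so $\sigma(W)\pm1$ is even, $\sigma(W)$ is odd, and therefore $\sigma(W)=\pm1$ since $|\sigma(W)|\le1$. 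Thus $\sigma(K)+4\arf(K)\equiv\sigma(W)\pm1\in\{0,\pm2\}\pmod 8$, as claimed. I expect the main obstacle to be pinning down the two classical inputs with the right conventions: the precise $4$-dimensional Gordon--Litherland formula for $W$ (the sign of the $e(F)$ term and the normalization of $e(F)$), and the identification of the Guillou--Marin form on the pushed-in Seifert surface with the Arf form of $K$ together with the mod $4$ value on the Möbius core. Reassuringly, the final congruence is insensitive to the sign $\varepsilon$ and to which of $\pm1$ occurs --- each only shuffles $\sigma(W)$ within $\{-1,0,1\}$ --- so a parity count finishes the argument in any case.
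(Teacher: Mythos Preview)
The paper does not prove this lemma; it is quoted without proof as a result of Yasuhara. Your argument is essentially the standard one and is correct in outline: pass to the branched double cover $W=\Sigma_2(B^4,F)$, combine the branched-cover signature formula $\sigma(W)=\sigma(K)\pm\tfrac12 e(F)$ with the Guillou--Marin congruence for $\hat F=F\cup_K\Sigma\subset S^4$, and use that the Brown invariant of the Guillou--Marin form splits as $4\arf(K)\pm1$ on $H_1(\Sigma;\Z/2)\oplus H_1(F;\Z/2)$. This is close in spirit to Yasuhara's original proof.

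Two minor remarks. First, the parity argument at the end is unnecessary: since $\partial W=\Sigma_2(K)$ is a rational homology sphere, the intersection form on $H_2(W;\QQ)\cong\QQ$ is nondegenerate, so $\sigma(W)=\pm1$ is automatic once $b_2(W)=1$. Second, the assertion $H_1(W;\QQ)=0$ (equivalently $b_2(W)=b_1(F)=1$) is indeed standard but deserves a citation rather than just the label ``standard fact''; note in particular that one cannot simply isotope $F$ to have no local maxima (that would make every slice surface ribbon), so the quickest route is the transfer/G-signature argument or a reference to the papers of Batson or Ozsv\'ath--Stipsicz--Szab\'o already cited here. Your hedging on the sign $\varepsilon$ and on the value $q(\text{core})\in\{\pm1\}$ is appropriate, and as you note the conclusion is insensitive to these choices.
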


\begin{rem}\label{rem:arf}
The Arf invariant 
satisfies \cite{levine}
\begin{align*}
\arf(K) = 
        \begin{cases} 0 &\mbox{if } \Delta_K(-1) \equiv \pm 1 \pmod{8},\\
1& \mbox{if } \Delta_K(-1) \equiv \pm 3 \pmod{8}. \end{cases}  
\end{align*}
For torus knots $T_{p,q}$ with relatively prime positive integers $p$ and $q$, it 
is given as follows. Without loss of generality, we assume that $q$ is odd. The determinant of $T_{p,q}$ is then given as
\begin{align*}
\det \left(T_{p,q}\right)&=\left \vert \Delta_{T_{p,q}}(-1)\right \vert= \begin{cases} 
    1 &\mbox{if } p\text{ is odd},\\
    q &\mbox{if } p \text{ is even}, 
\end{cases} \qquad \text{so}\\
   \arf \left(T_{p,q}\right)&= \begin{cases} 
    0 &\mbox{if } p \text{ is odd or } q\equiv \pm 1\pmod 8,\\
    1 &\mbox{if }  p \text{ is even and } q\equiv \pm 3 \pmod 8.
\end{cases}
\end{align*}
\end{rem}

Along the same lines as \Cref{lem:yasuhara}, Gilmer and Livingston showed the following.

\begin{lem}[{\cite[Theorem 10]{gilmerlivingston}}]
Let $K$ be a knot in $S^3$ that bounds a smoothly embedded punctured Klein bottle $F$ in $B^4$. If $\Sigma_2(B^4,F)$, the \emph{double branched cover of $B^4$ branched over $F$}, has a positive definite intersection form, then
$
\sigma(K) + 4 \arf(K) \equiv 0, 2 \text{ or } 4 \pmod 8.
$
Moreover, if $\Sigma_2(B^4,F)$ is negative definite, then $
\sigma(K) + 4 \arf(K) \equiv 0, 4 \text{ or } 6 \pmod 8$.
\end{lem}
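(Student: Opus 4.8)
The plan is to adapt Yasuhara's argument for \Cref{lem:yasuhara} from spanning surfaces of first Betti number $1$ to those of first Betti number $2$; the one genuinely new ingredient is the definiteness hypothesis, which pins down the signature of the relevant branched double cover.

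First I would set $W \eqdef \Sigma_2(F)$, the double cover of $B^4$ branched over $F$, so that $\partial W = \Sigma_2(K)$ is the double branched cover of $(S^3,K)$. Since $\det(K) = |\Delta_K(-1)|$ is odd, $\Sigma_2(K)$ is a $\Z/2$-homology sphere and carries a unique spin structure. A standard computation gives $\chi(W) = 2\chi(B^4) - \chi(F) = 3$ and $b_1(W) = b_3(W) = 0$ (rationally), hence $b_2(W) = b_1(F) = 2$; in particular, if the intersection form $Q_W$ is definite, then $\sigma(W) = \pm 2$, the sign being that of $Q_W$.

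Next I would invoke a Guillou--Marin-type congruence for the pair $(W,F)$, exactly the four-dimensional mechanism underlying \Cref{lem:yasuhara}. Writing $e(F)\in\Z$ for the normal Euler number of $F$ and $\beta(q_F)\in\Z/8$ for the Brown invariant of the $\Z/4$-valued Guillou--Marin quadratic form on $H_1(F;\Z/2)\cong(\Z/2)^2$, one has a congruence of the form
\[
\sigma(W) - e(F) \;\equiv\; \bigl(\sigma(K) + 4\arf(K)\bigr) + 2\beta(q_F) \pmod{16},
\]
in which the boundary term $\sigma(K)+4\arf(K)$ is the relevant $\mathrm{Pin}^-$-type defect of $(\Sigma_2(K),\mathrm{spin})$ --- the very quantity appearing in \Cref{lem:yasuhara}. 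Equivalently, one may close $W$ off along $\Sigma_2(K)$ by the double branched cover $W_0$ of a pushed-in Seifert surface $D$ for $K$, using $\sigma(W_0)=\sigma(K)$ and $e(D)=0$, apply the closed Guillou--Marin formula to the resulting closed pair $(X,\widehat F)$ with $\widehat F = F\cup_K D$, and extract $\arf(K)$ as the Brown invariant of the orientable summand $H_1(D;\Z/2)$. This is the step that accounts for the appearance of $\arf(K)$ in the statement.

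Finally I would feed in the combinatorics specific to a punctured Klein bottle: there $e(F)$ is even with $|e(F)|\le 2\,b_1(F)=4$ and is further constrained modulo $4$, while the admissible Brown invariants $\beta(q_F)$ of a $\Z/4$-quadratic form on $(\Z/2)^2$ form a short explicit list, linked to $e(F)$ by a further congruence (the boundary analogue of $e\equiv 2\beta$ for closed surfaces). Reducing the displayed congruence modulo $8$ and combining $\sigma(W)=\pm2$ with these constraints forces $\sigma(K)+4\arf(K)$ into a three-element set of residues modulo $8$, and the two sign choices $\sigma(W)=+2$ and $\sigma(W)=-2$ produce exactly $\{0,2,4\}$ and $\{0,4,6\}$, matching the positive- and negative-definite cases. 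The main obstacle is bookkeeping rather than conceptual: one must fix the Guillou--Marin congruence in the branched-cover-with-boundary setting together with every sign convention (the normal Euler number, the orientation of $W$, the normalization of the $\mathrm{Pin}^-$-defect), verify the identification of that defect with $\sigma(K)+4\arf(K)$, and carefully enumerate the admissible pairs $\bigl(e(F),\beta(q_F)\bigr)$ for a punctured Klein bottle; only once these are settled does the arithmetic collapse to the two stated residue sets, while the homological inputs (Betti numbers of $W$, characteristicness of the branch locus, behaviour of $\sigma$ and $e$ under capping off) are routine.
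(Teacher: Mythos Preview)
The paper does not prove this lemma at all: it is stated purely as background, with attribution to \cite[Theorem~10]{gilmerlivingston}, and no argument is given. So there is no ``paper's own proof'' to compare against.

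As for your sketch itself: the overall strategy---compute $b_2(\Sigma_2(F))=2$ so that definiteness forces $\sigma(\Sigma_2(F))=\pm 2$, then invoke a Guillou--Marin/Brown-invariant congruence linking $\sigma(\Sigma_2(F))$, the normal Euler number $e(F)$, $\sigma(K)$, and $\arf(K)$---is indeed the mechanism behind the Gilmer--Livingston result. But your write-up is a plan rather than a proof: you state a congruence ``of the form'' without pinning down the exact statement or its source, you defer all sign conventions, and you do not actually carry out the enumeration of admissible pairs $(e(F),\beta(q_F))$ for a punctured Klein bottle that is supposed to produce exactly the residue sets $\{0,2,4\}$ and $\{0,4,6\}$. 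Since the entire content of the lemma lives in that arithmetic, the proposal as written is not yet a proof; to complete it you would need to either reproduce the relevant portion of \cite{gilmerlivingston} (where the congruence and the case analysis are made precise) or simply cite that paper, as the present article does.
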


It was an open question until work of Batson \cite{batson2012nonorientable} whether $\gamma_4(K) \leq 3$ for all knots $K$ in $S^3$.
Batson found a lower bound on $\gamma_4$ using Heegaard Floer homology and used it to show that $\gamma_4\left(T_{2k,2k-1}\right) = k-1$ for all $k \geq 1$. 
His lower bound was the following. 

\begin{lem}[\cite{batson2012nonorientable}]\label{lem:batson}
For any knot $K$ in $S^3$, we have $\frac{\sigma(K)}{2}-d(S_{-1}^3(K)) \leq \gamma_4(K)$, where 
$d(S_{-1}^3(K))$ denotes the correction term by  Ozsv\'ath and Szab\'o \cite{ozsvath2003absolutely} of the $(-1)$-surgery of $S^3$ along $K$, equipped with the unique $\text{Spin}^c$ structure.
\end{lem}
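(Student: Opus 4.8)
The plan is to start from a smooth nonorientable surface $F \subset B^4$ with $\partial F = K$ and $b_1(F) = \gamma_4(K)$, and to extract a negative-definite $4$-manifold bounding $(-1)$-surgery on $K$ from the double branched cover construction, then feed this into the correction-term inequality. First I would form the double cover $\Sigma_2(B^4, F)$ of $B^4$ branched along $F$. This is a smooth, compact, oriented $4$-manifold whose boundary is the double cover $\Sigma_2(S^3, K)$ of $S^3$ branched along $K$; since $K$ is a knot, this boundary is an integral homology sphere only when $F$ is orientable — in the nonorientable case one needs to be slightly more careful, so instead I would follow Batson and work with the associated $(-1)$-surgery via a capped-off version. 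Concretely, attach a $2$-handle to $B^4$ along $K$ with framing $-1$ to build $W$ with $\partial W = S^3_{-1}(K)$; the surface $F$ together with the core of the handle (suitably tubed) caps off to a closed nonorientable surface $\widehat F$ in $W$, and one takes $X = \Sigma_2(W, \widehat F)$, a $4$-manifold with $\partial X = S^3_{-1}(K)$ (the $2$-fold cyclic branched cover; $-1$-surgery has $H_1 = \Z/1$... — more precisely the relevant branched cover of the surgery gives back $S^3_{-1}(K)$ because the branch locus is nullhomologous mod $2$ after capping, by the parity of the self-intersection).

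The key computation is then the intersection form of $X$. Using the $G$-signature theorem (or equivalently the formula of Gordon–Litherland / Viro for signatures of branched covers along nonorientable surfaces), the signature of $X$ is expressible as $\sigma(X) = 2\sigma(W) - \tfrac{1}{2}[\widehat F]^2_{\text{normal}} - (\text{defect terms})$, and after assembling the contributions one gets $\sigma(X) = -\sigma(K) + (\text{bounded error controlled by } b_1(F))$, while $b_2(X)$ is controlled by $b_1(F)$ as well; the point is that $X$ can be arranged to be negative definite with $b_2(X) \le$ (something like) $\gamma_4(K) + 1$, or more sharply one tracks $\sigma(X)$ and $b_2^+(X) = 0$ directly. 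Then the Ozsváth–Szabó inequality for a negative-definite $4$-manifold $X$ bounding a rational homology sphere $Y = S^3_{-1}(K)$ with the torsion $\mathrm{Spin}^c$ structure, namely $c_1(\mathfrak{s})^2 + b_2(X) \le 4\, d(Y)$, rearranges — using $c_1^2 = \sigma(X) - b_2(X) $ for the appropriate $\mathfrak s$, or the characteristic-vector version — into a lower bound for $d(S^3_{-1}(K))$ in terms of $\sigma(X)$, hence an upper bound for $\tfrac{\sigma(K)}{2} - d(S^3_{-1}(K))$ in terms of $b_1(F) = \gamma_4(K)$.

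The main obstacle I expect is the bookkeeping in the branched-cover signature computation: correctly identifying the branched double cover of the $-1$-framed handlebody (including checking that the branch surface is appropriately characteristic so that the cover is well-defined and has the claimed boundary), and then pinning down the precise relationship between $\sigma(\Sigma_2(W,\widehat F))$, the classical signature $\sigma(K)$, and the normal Euler number $e(\widehat F)$ of the capped-off nonorientable surface — this is where the factor of $\tfrac{1}{2}$ and the role of $b_1(F)$ versus the Euler number genuinely enter, via the Guillou–Marin or Gordon–Litherland congruence. Once the definiteness and the signature formula are in hand, the final step is a formal application of the correction-term obstruction and is routine.
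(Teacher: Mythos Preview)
The paper does not prove this lemma; it is stated with a citation to \cite{batson2012nonorientable} and used as a black box. There is therefore no proof in this paper to compare your attempt against.

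As a standalone sketch, however, there is a concrete gap in your construction of the $4$-manifold $X$. You set $W = B^4 \cup_{K}(\text{$-1$-framed $2$-handle})$, cap $F$ off to a closed $\widehat F \subset W$, and take $X = \Sigma_2(W,\widehat F)$, asserting $\partial X = S^3_{-1}(K)$. But $\widehat F$ is closed and disjoint from $\partial W$, so $\partial X$ is the \emph{unbranched} double cover of $\partial W = S^3_{-1}(K)$. Since $S^3_{-1}(K)$ is an integral homology sphere, $H^1(S^3_{-1}(K);\Z/2)=0$ and this cover is trivial: $\partial X$ consists of two disjoint copies of $S^3_{-1}(K)$, not one. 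Your parenthetical that ``the branch locus is nullhomologous mod $2$ after capping'' is also incorrect --- $\widehat F = F \cup (\text{core})$ is $\Z/2$-homologous to a capped Seifert surface, which generates $H_2(W;\Z/2)\cong\Z/2$ --- and even the existence of $\Sigma_2(W,\widehat F)$ hinges on the parity of $e(\widehat F)=e(F)-1$, which the choice of framing $-1$ does not control. So the Ozsv\'ath--Szab\'o inequality cannot be applied to $X$ as you describe.

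Batson's actual route to a definite filling with connected boundary is different: he attaches the $2$-handle with framing chosen so that $\widehat F$ has trivial normal bundle, and then uses the orientation double cover of $\widehat F$ inside its tubular neighborhood (rather than a branched cover of all of $W$) to build the $4$-manifold. The Gordon--Litherland-type signature accounting you outline is indeed the right final ingredient, but the filling must be constructed more carefully for the argument to go through.
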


Later, Ozsv\'ath, Stipsicz and Szab\'o defined an additive concordance invariant $\upsilon(K)$ for any knot $K$ in $S^3$ using the Heegaard Floer knot complex \cite{ozsvath2017unoriented} and proved the following bound.

\begin{lem}[{\cite[Theorem 1.2]{ozsvath2017unoriented}}]\label{lem:OSSbound}
For any knot $K$ in $S^3$, we have $\left \vert \upsilon(K) - \frac{\sigma(K)}{2} \right \vert \leq \gamma_4(K)$.
\end{lem}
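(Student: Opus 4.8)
The plan is to reduce the global inequality to a local estimate governing the change of $\upsilon$ and of the signature under a single band move, and then to exploit the unoriented skein exact triangle in knot Floer homology together with the classical skein behavior of $\sigma$. First I would record the topological input. If $K$ bounds a nonorientable surface $F\subset B^4$ with $b_1(F)=\gamma_4(K)$, then $F$ is abstractly a connected sum of $\gamma_4(K)$ copies of $\mathbb{RP}^2$ with an open disk removed; since a connected surface with nonempty boundary admits a handle decomposition with a single $0$-handle and no $2$-handles, an Euler characteristic count forces $F$ to have exactly $\gamma_4(K)$ bands. Reading this decomposition radially in $B^4$ exhibits $K$ as the result of $\gamma_4(K)$ successive band moves applied to the unknot $U$, passing through a sequence of knots and links. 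Since $\upsilon(U)=0=\sigma(U)$, it therefore suffices to prove the per-move estimate $\bigl|(\upsilon-\tfrac{\sigma}{2})(L)-(\upsilon-\tfrac{\sigma}{2})(L')\bigr|\le 1$ whenever $L'$ is obtained from $L$ by one band move.

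For the Floer-theoretic half of this estimate I would realize the band move $L\rightsquigarrow L'$ as the passage between the two unoriented resolutions of a crossing in some auxiliary diagram $D$. The three links $L$, $L'$ and the one $\widehat{D}$ underlying the chosen crossing then sit in Ozsv\'{a}th--Stipsicz--Szab\'{o}'s unoriented skein exact triangle for $\widehat{HFK}$, with grading shifts prescribed by the writhes of $D$. Because $\upsilon$ is extracted from the filtration on $\widehat{CFK}$ underlying $\Upsilon_K(1)$, this exact triangle --- and the associated mapping cone --- descends to inequalities bounding $\upsilon(L)-\upsilon(L')$; chasing the $\delta$-grading through the cone shows that this difference is controlled, up to a factor of two, by the same writhe data that governs the signature. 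On the classical side, $\sigma$ obeys the matching skein inequality: either by Murasugi's theorem, or transparently via the Gordon--Litherland formula $\sigma(K)=\operatorname{sig}(G_F)-\mu(F)$ expressing $\sigma$ through the Goeritz form and normal Euler number correction of a spanning surface --- both of which change in a controlled way when a band is added to or deleted from the surface --- $\sigma/2$ jumps under the band move by precisely the writhe-determined amount.

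The crux, and the main obstacle, is to show that these two jumps agree on the nose, so that the individually unbounded changes of $\upsilon$ and of $\sigma/2$ cancel and the difference $\upsilon-\sigma/2$ moves by at most $1$ at each band move. This is exactly the point at which the unoriented refinement of knot Floer homology is indispensable: the ordinary knot invariants do not detect the relevant correction, and verifying that the $\upsilon$-filtration transforms under the unoriented skein triangle with precisely the signature-governed shift is where nearly all of the work lies. Granting this per-move bound, summing over the $\gamma_4(K)$ band moves in the handle decomposition of $F$ and using $\upsilon(U)-\tfrac{\sigma(U)}{2}=0$ yields $\bigl|\upsilon(K)-\tfrac{\sigma(K)}{2}\bigr|\le\gamma_4(K)$. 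I would also mention an alternative route --- passing to the double branched cover $\Sigma_2(B^4,F)$ and comparing correction terms, in the spirit of \Cref{lem:batson} --- but this naturally produces a $d$-invariant inequality rather than one phrased via $\upsilon$, so the skein-triangle approach appears to be the more direct one.
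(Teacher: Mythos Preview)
The paper does not prove this lemma: it is stated in the background section as a citation of \cite[Theorem 1.2]{ozsvath2017unoriented} and no argument is given. So there is no ``paper's own proof'' to compare against.

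That said, your outline is broadly in the spirit of the original Ozsv\'ath--Stipsicz--Szab\'o argument: they, too, control the change in $\upsilon$ under an unoriented resolution via their unoriented skein exact triangle, and match it against the classical skein behavior of $\sigma$ (packaged there by working with a $\delta$-grading shifted by $\sigma/2$). Two cautions. First, your handle-decomposition step glosses over the fact that the intermediate stages of a generic band decomposition of $F$ may be links rather than knots; you either need $\upsilon$ for links, or you must argue (as one can, since a \emph{nonorientable} band move on a knot returns a knot) that the bands can be ordered so that every stage is a knot. Second, you yourself flag that the heart of the matter---showing the $\upsilon$-shift and the $\sigma/2$-shift coincide so that only a $\pm 1$ discrepancy survives---is ``where nearly all of the work lies,'' and your proposal does not carry that out. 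As written this is a plan rather than a proof; the substantive content is exactly the grading analysis in the unoriented skein triangle that you defer.
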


Using this lower bound, they reproved Batson's result that the nonorientable $4$-ball genus of a knot can be arbitrarily large by showing that $\gamma_4(\#_n T_{3,4} ) = n$ for all $n \geq 1$ \cite[Corollary 1.4]{ozsvath2017unoriented}. \\

There are now several lower bounds on $\gamma_4$ that look similar to that given in \Cref{lem:OSSbound}. First, Golla and Marengon \cite{gollamarengon} showed that $\gamma_4(K) \geq \frac{\sigma(K)}{2}-\min_{m \geq 0} \left\{m + 2V_m (\bar{K})\right\}$ for any knot $K$ in $S^3$, where $\bar{K}$ denotes the mirror of $K$ and $\{V_i\}_i$ are the concordance invariants explored in \cite{niwu} (and originally defined in \cite{rasmussen2004lens}). This bound agrees with the bound in \Cref{lem:OSSbound} for alternating knots and $L$-space knots, in particular for torus knots. In addition,
Daemi and Scaduto~\cite{daemi2020chern} and Ballinger~\cite{ballinger2020concordance} defined invariants $h_s$ and $t$ in gauge theory and Khovanov homology, respectively, from which similar inequalities can be obtained.
Again, for many knots, including alternating knots and torus knots, these bounds are not better than the one from \Cref{lem:OSSbound}.
Shortly after, Allen showed the following.

\begin{lem}[{\cite[Proposition 6.5]{allen2020nonorientable}}]\label{lem:allen}
Let $K$ be a knot in $S^3$ with $\sigma(K) < 2\upsilon(K)$ such that the \emph{double branched cover of $S^3$ along $K$}, denoted by $\Sigma_2(S^3,K)$, is an integer homology $3$-sphere and $\delta(K) < 0$, where $\delta(K)$ is a concordance invariant defined by Manolescu and Owens \cite{manolescu2007concordance}. Then $\upsilon(K) - \frac{\sigma(K)}{2} +1 \leq \gamma_4(K)$.
\end{lem}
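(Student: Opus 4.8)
\emph{Proof proposal.} The plan is to rule out the equality case of \Cref{lem:OSSbound} under the three hypotheses; since that lemma already gives $\gamma_4(K)\ge\upsilon(K)-\sigma(K)/2$ for free, it suffices to show that $\gamma_4(K)=\upsilon(K)-\sigma(K)/2$ cannot occur here. The only tool that can feel hypotheses phrased in terms of $\Sigma_2(K)$ and $\delta$ is the double branched cover, so I would argue by contradiction: suppose $K$ bounds a connected nonorientable surface $F\subset B^4$ with $\partial F=K$ and $b_1(F)=\gamma_4(K)=\upsilon(K)-\sigma(K)/2=:n$. Note $n\ge 1$ because $\sigma(K)<2\upsilon(K)$. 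Set $W=\Sigma_2(B^4,F)$, the double cover of $B^4$ branched along $F$. An Euler-characteristic count gives $b_2(W)=b_1(F)=n$; because $\Sigma_2(K)$ is an integer homology sphere, $\partial W=\Sigma_2(K)$ carries a unique $\mathrm{spin}^c$ structure and $b_1(W)=0$; and the Hirzebruch $G$-signature theorem expresses $\sigma(W)$ in terms of $\sigma(K)$ and the normal Euler number $e(F)$ of $F$.

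The first and main step is to deduce from the equality $b_1(F)=\gamma_4(K)=\upsilon(K)-\sigma(K)/2$ that $W$ is \emph{negative definite} (with the orientation it receives as a filling of $\Sigma_2(K)$). A priori we only know $|\sigma(W)|\le b_2(W)=n$; the content is that minimality of $b_1(F)$ pins $e(F)$ to the extremal value for which $\sigma(W)=-n$. To prove this I would reopen the proof of \Cref{lem:OSSbound}, which tracks the joint behaviour of $\upsilon$ and $\sigma$ along the sequence of band moves that assemble $F$, and show that any slack there --- equivalently, any value of $e(F)$ bounded away from the extreme --- could be used to modify $F$ and lower $b_1(F)$, contradicting $b_1(F)=\gamma_4(K)$.

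With $W$ negative definite, $b_1(W)=0$, and $\partial W=\Sigma_2(K)$ in hand, I would finish using the Ozsv\'ath--Szab\'o inequality for negative-definite fillings: for every $\mathrm{spin}^c$ structure $\mathfrak t$ on $W$,
\[
c_1(\mathfrak t)^2+b_2(W)\le 4\,d\bigl(\Sigma_2(K)\bigr).
\]
By Elkies' theorem on characteristic covectors of negative-definite unimodular lattices, some $\mathfrak t$ satisfies $c_1(\mathfrak t)^2\ge -b_2(W)$, so the left-hand side is $\ge 0$ for that $\mathfrak t$; hence $d(\Sigma_2(K))\ge 0$, i.e.\ $\delta(K)=2\,d(\Sigma_2(K))\ge 0$. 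This contradicts $\delta(K)<0$, so no such $F$ exists and $\gamma_4(K)\ge\upsilon(K)-\sigma(K)/2+1$.

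The hard part will be the negative-definiteness step: the absolute-value statement of \Cref{lem:OSSbound} records neither a sign nor any rigidity, so one must reprove it carefully enough to extract both the correct orientation of $W$ and the extremality of $e(F)$ from the equality case. The remaining hypotheses are exactly what make the endgame run cleanly --- $\sigma(K)<2\upsilon(K)$ ensures $n\ge 1$ so that $W$ has a genuinely nontrivial intersection form, and $\Sigma_2(K)$ being an integer homology sphere gives $b_1(W)=0$, a unique boundary $\mathrm{spin}^c$ structure, and the clean identity $\delta(K)=2\,d(\Sigma_2(K))$.
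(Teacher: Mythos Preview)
The paper does not prove this lemma at all: it is quoted verbatim from Allen's paper as background, so there is no ``paper's own proof'' to compare against. That said, your sketch is essentially the right architecture --- take an extremal $F$, pass to $W=\Sigma_2(B^4,F)$, show $W$ is negative definite, and finish with the Ozsv\'ath--Szab\'o $d$-invariant inequality plus Elkies to force $\delta(K)\ge 0$. The endgame is fine.

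The gap is your justification of negative definiteness. Your proposed mechanism --- ``any slack \ldots could be used to modify $F$ and lower $b_1(F)$'' --- is not how this works; there is no surgery on $F$ that eats slack in an inequality to drop $b_1$. What actually pins down $\sigma(W)$ is that the bound in \Cref{lem:OSSbound} is obtained by \emph{adding two separate inequalities}, each linear in the normal Euler number $e(F)$: one is the signature inequality $\lvert \sigma(K)+e(F)/2\rvert \le b_1(F)$ (equivalently $\lvert\sigma(W)\rvert\le b_2(W)$ via Gordon--Litherland), and the other is the Heegaard Floer inequality relating $\upsilon(K)$ to $e(F)$ and $b_1(F)$ that Ozsv\'ath--Stipsicz--Szab\'o prove en route. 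Summing the appropriate one-sided versions eliminates $e(F)$ and yields $\upsilon(K)-\sigma(K)/2\le b_1(F)$; equality therefore forces equality in \emph{both} summands, and equality in the signature inequality is exactly $\sigma(W)=-b_2(W)$, i.e.\ $W$ is negative definite. So ``reopen the proof of \Cref{lem:OSSbound}'' is the right instinct, but what you must extract is these two component inequalities and their equality cases, not a surface-modification trick. Once you state and use those, your argument goes through. (A minor point: you should also justify $b_1(W)=0$; this is standard for double branched covers of $B^4$ over connected surfaces and does not need $\partial W$ to be an integer homology sphere --- the latter hypothesis is used to make the intersection form unimodular so that Elkies applies and so that $\delta(K)=2d(\Sigma_2(K))$ unambiguously.)
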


It is well-known that the double cover of $S^3$ branched along the torus knot $T_{p,q}$ is the Brieskorn sphere $\Sigma(2,p,q)$, which is a homology sphere if and only if $p$ and $q$ are both odd. Therefore, Allen's lower bound does not apply to the infinite families of torus knots in Theorems \ref{thm:evencalculations} and \ref{thm:4q}.

Finally, given a knot $K$, one can obstruct the existence of a M{\"o}bius band bounded by $K$ (or possibly nonorientable surfaces with higher first Betti number) by computing Fr\o yshov's $h-$invariant of $\Sigma_2(S^3,K)$ and appealing to \cite[ Theorem 3]{froyshov} or by using Donaldson's Diagonalization Theorem as in \cite{jabukakelly}. Because the latter method has been used recently to help calculate the nonorientable 4-ball genus of all knots with 8 or 9 crossings in \cite{jabukakelly} and of all knots with 10 crossings in \cite{ghanbarian2020}, we will highlight this strategy.

The idea is as follows. Suppose $K$ bounds a M{\"o}bius band $M$ and that the double cover of $B^4$ branched along $M$ --- $\Sigma_2(B^4,M)$ --- is positive (resp. negative) definite. Suppose moreover that the double cover of $S^3$ branched along $K$ --- $\Sigma_2(S^3,K)$--- bounds a negative (resp. positive) definite 4-manifold $X$. Then the closed 4-manifold $X\cup_{\Sigma_2(S^3,K)}(-\Sigma_2(B^4,M))$ is negative (resp. positive) definite. By Donaldson's Diagonalization Theorem \cite{donaldson}, there exists a lattice embedding $(H_2(X),Q_X)\to\left(\mathbb{Z}^{\text{rank}(H_2(X))+1},-I\right)$. If one can show that there is no such embedding, then it follows that $K$ does not bound a M{\"o}bius band. See \cite{jabukakelly} for details. The downside to this obstruction is that it relies on finding a suitable definite 4-manifold bounded by $\Sigma_2(S^3,K)$. The more complicated the knot in question, the more nontrivial it can be to find such a definite 4-manifold and, indeed, such definite 4-manifolds need not exist.

\subsection{Obstructions for the existence of topologically locally flat nonorientable surfaces}

Given a knot $K$ in $S^3$, recall that $\gammatop(K)$ is the minimal first Betti number of locally flat nonorientable surfaces in $B^4$ bounded by $K$. Murakami and Yasuhara \cite{murakami2000four} proved the following lower bound in this category.

\begin{lem}[{\cite[Theorem 2.5]{murakami2000four}}]\label{lem:linkingpairing}
Suppose that a knot $K$ in $S^3$ bounds a locally flat null-homologous nonorientable surface of first Betti number $g$ in a compact $4$-manifold $W$ bounding $S^3$ with $H_1(W;\mathbb{Z})=0$. Then the linking form $\lambda$ on $H_1 (\Sigma_2(S^3,K);\mathbb{Z})$ splits into a direct sum $(G_1 ,\lambda_1) \oplus (G_2 ,\lambda_2)$, where the following conditions are satisfied:
\begin{itemize}
    \item there exists a square matrix $V$ with integer entries, whose size is $2b_2(W)+g$ and whose determinant is $\pm \vert G_1 \vert$, such that $\lambda_1$ is presented by $-V^{-1}$.
    \item $\lambda_2$ is metabolic.
\end{itemize}
\end{lem}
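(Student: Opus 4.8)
This is the theorem of Murakami and Yasuhara \cite{murakami2000four}; the natural proof passes through the double branched cover, and the plan is as follows. Observe first that $\Sigma_2(K)\eqdef\Sigma_2(S^3,K)$ is always a rational homology $3$-sphere, since $|H_1(\Sigma_2(K);\mathbb{Z})|=|\Delta_K(-1)|\neq 0$. The idea is to manufacture a $4$-manifold filling of $\Sigma_2(K)$ directly from $F$: after isotoping $F$ into the interior of $W$, set $X\eqdef\Sigma_2(W,F)$, the double cover of $W$ branched along $F$. This is well defined because the null-homology hypothesis on $F$, together with $H_1(W;\mathbb{Z})=0$, provides an epimorphism $\pi_1(W\setminus F)\to\mathbb{Z}/2$ carrying a meridian of $F$ to the generator; since a locally flat surface in a $4$-manifold admits a normal disc bundle, $X$ is a topological $4$-manifold, with $\partial X=\Sigma_2(S^3,K)=\Sigma_2(K)$.

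Next I would pin down the homology of $X$. The deck involution $\tau$ splits $H_*(X;\mathbb{Q})=H_*(X;\mathbb{Q})^{+}\oplus H_*(X;\mathbb{Q})^{-}$ into $(\pm1)$-eigenspaces, with invariant part $H_*(X;\mathbb{Q})^{+}\cong H_*(W;\mathbb{Q})$; the anti-invariant part is governed by the unbranched double cover of $W\setminus\nu(F)$, which one reaches through a Mayer--Vietoris sequence writing $X$ as this double cover glued to the normal disc bundle of $F$. Combining $H_1(W;\mathbb{Q})=0$, the vanishing $b_3(W)=0$ (which follows from $H_1(W;\mathbb{Z})=0$ and $\partial W=S^3$), and the Euler-characteristic count $\chi(X)=2\chi(W)-\chi(F)=2(1+b_2(W))-(1-g)$, one deduces $b_1(X;\mathbb{Q})=0$, and hence $b_2(X)=2b_2(W)+g$. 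The null-homology hypothesis on $F$ enters exactly here, forcing the anti-invariant first rational homology of $X$ to be zero.

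Since $\partial X$ is a rational homology sphere, the intersection form $Q_X$ on $H_2(X;\mathbb{Z})/\mathrm{tors}$ is nondegenerate over $\mathbb{Q}$; choose a symmetric integer matrix $V$ representing it, of size $n\eqdef b_2(X)=2b_2(W)+g$. Running the long exact sequence of the pair $(X,\partial X)$ through Poincar\'e--Lefschetz duality ($H_2(X,\partial X;\mathbb{Z})\cong H^2(X;\mathbb{Z})$), the universal coefficient theorem, and the vanishing $H_2(\partial X;\mathbb{Z})=0$, one exhibits inside $H_1(\Sigma_2(K))$ a subgroup isomorphic to $\mathrm{coker}(V\colon\mathbb{Z}^n\to\mathbb{Z}^n)$ on which the linking form is presented by $-V^{-1}$, together with a complementary contribution controlled by $\mathrm{tors}\,H_1(X)$. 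Reorganising this as an orthogonal direct sum $(G_1,\lambda_1)\oplus(G_2,\lambda_2)$ yields $\lambda_1$ presented by $-V^{-1}$ and $|\det V|=|G_1|$. Finally $\lambda_2$ is metabolic: since $b_1(X;\mathbb{Q})=0$, the half-lives-half-dies principle makes $\ker(H_1(\Sigma_2(K);\mathbb{Z})\to H_1(X;\mathbb{Z}))$ a metabolizer for $\lambda$, and its intersection with $G_2$ is then a metabolizer for $\lambda_2$.

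The main obstacle is the integral bookkeeping in the final step: one must upgrade the filtration produced by the exact sequences to a genuine orthogonal splitting $(G_1,\lambda_1)\oplus(G_2,\lambda_2)$, and pin down $\det V$ to be exactly $\pm|G_1|$ rather than merely $\pm|G_1|$ up to $\mathrm{tors}\,H_1(X)$ and $\mathrm{tors}\,H_2(X)$; this forces one to track carefully how $\mathrm{coker}(V)$, $\mathrm{tors}\,H_1(X)$, and $\mathrm{tors}\,H_2(X)$ sit together under the duality and universal-coefficient isomorphisms. A secondary technical point is to justify $b_1(X;\mathbb{Q})=0$ in the locally flat category when $F$ is nonorientable. (One could alternatively bypass the branched cover and present the linking form of $\Sigma_2(K)$ via a Gordon--Litherland-type pairing on $H_1(F)$, but the branched-cover route makes the appearance of $2b_2(W)+g$ most transparent.)
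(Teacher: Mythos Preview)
The paper does not give its own proof of this statement: it is quoted verbatim as \cite[Theorem 2.5]{murakami2000four} and used as a black box, so there is no ``paper's proof'' against which to compare your proposal.

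That said, your outline is the standard Murakami--Yasuhara argument via the branched double cover $X=\Sigma_2(W,F)$, and the shape is right: compute $b_2(X)=2b_2(W)+g$ from an Euler-characteristic count and $b_1(X;\mathbb{Q})=0$, present part of the linking form by $-V^{-1}$ where $V$ is the intersection matrix of $X$, and identify the metabolic complement using half-lives-half-dies. You are honest about the two actual difficulties, and they are the real content. First, the integral bookkeeping: the long exact sequence of $(X,\partial X)$ only gives a filtration, not an orthogonal splitting, and your claim that ``its intersection with $G_2$ is then a metabolizer for $\lambda_2$'' is not automatic---you need to arrange the splitting so that the metabolizer of $\lambda$ respects it. Murakami and Yasuhara handle this by a careful choice of bases and an explicit description of the torsion in $H_1(X)$ and $H_2(X)$; as written, your sketch asserts the conclusion rather than proving it. Second, you correctly flag that $b_1(X;\mathbb{Q})=0$ needs justification; this uses that $F$ is null-homologous (so the meridian maps nontrivially, making the anti-invariant $H_1$ vanish rationally), and works in the locally flat category because locally flat surfaces have normal disk bundles. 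These are gaps in the write-up rather than in the strategy, but they are exactly the places where the argument requires work beyond what you have indicated.
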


Using this lower bound, Gilmer and Livingston  \cite{gilmerlivingston} were able to prove that there exist knots with $\gammatop \ge 3$. Feller and Golla \cite[Proposition 1.4]{feller2020nonorientable} used it to show that if $p > 0$ with $p \equiv 5 \pmod 8$, then $T_{p,p\pm 1}$ does not bound a locally flat M{\" o}bius band in $B^4$. 
Moreover, by \cite[Proposition 5.6]{feller2020nonorientable}, for each odd prime $p$ and each choice of sign, there are infinitely many positive integers $k$ such that the knot $T_{2p, 2kp\pm 1}$ does not bound a locally flat M{\" o}bius band in $B^4$. To the best of the authors' knowledge, it is unknown whether $\gammatop$ can be arbitrarily large.

\subsection{Pinch moves and the pinch number}\label{sec:upperBounds}
Consider the torus as a square with opposite edges identified. Then the torus knot $T_{p,q}$ can be represented by the collection of parallel strands in the square with slope $\frac{p}{q}$. A \emph{pinch move} is a nonorientable band move obtained by attaching an untwisted band between two adjacent strands of $T_{p,q}$. The resulting knot still lies in the torus and is thus a torus knot, which in fact 
does not depend on the adjacent strands to which we choose to attach the band. A formula for the resulting torus knot was first provided in \cite{batson2012nonorientable} and proved in \cite{jabuka2019nonorientable}.

\begin{lem}[\cite{batson2012nonorientable},{\cite[Lemma 2.1]{jabuka2019nonorientable}}]
Let $p$ and $q$ be relatively prime positive integers and consider a diagram of the torus knot $T_{p,q}$ on the flat torus. After applying a pinch move to $T_{p,q}$, the resulting torus knot (up to orientation) is $T(|p-2t|,|q-2h|)$, where $t$ and $h$ are the integers uniquely determined by the
requirements \label{lem:pinch}
\begin{align*}
    &t\equiv -q^{-1}\pmod p \text{ and } t\in\{0,\ldots,p-1\},\\
    &h\equiv p^{-1}\pmod q \text{ and } h\in\{0,\ldots,q-1\}.
\end{align*}
\end{lem}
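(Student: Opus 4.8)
The plan is to compute everything inside the flat torus $T^2=\mathbb{R}^2/\mathbb{Z}^2$ (standardly embedded in $S^3$) via the universal covering $\pi:\mathbb{R}^2\to T^2$. Realise $T_{p,q}$ as the image $\gamma$ of a straight line representing the primitive class $v=(p,q)\in H_1(T^2)$; then $\pi^{-1}(\gamma)$ is the family of parallel lines $L_k=\mathbb{R}v+ku$, $k\in\mathbb{Z}$, where $u\in\mathbb{Z}^2$ is any vector with $\det(v\mid u)=1$. A routine check identifies the unique such $u$ whose first coordinate lies in $\{0,\dots,p-1\}$ as $u=(t,h)$ with $t,h$ as in the statement: reducing $\det(v\mid u)=ph-qt=1$ modulo $p$ and modulo $q$ gives $t\equiv -q^{-1}\pmod p$ and $h\equiv p^{-1}\pmod q$, and the range conditions select the representatives (we may assume $p,q\ge2$, the cases $p=1$ or $q=1$ being the unknot). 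Finally, two ``adjacent strands'' of the flat diagram are two strands of $\gamma$ with no strand between them, so their lifts are two of the $L_k$ with nothing between them, hence consecutive; after translating we may assume the pinch band joins the strand on $L_0$ to the strand on $L_1$.

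Next I would analyse the band surgery upstairs. The pinch band $B$ is a regular neighbourhood of an essential arc in the complementary annulus $T^2\setminus N(\gamma)$; as $B$ is a disk, $\pi^{-1}(B)$ is the $\mathbb{Z}^2$-orbit of a single lift $\widetilde{B}_0$ joining $L_0$ to $L_1$. Consequently, after the pinch, each consecutive pair $L_k,L_{k+1}$ is joined by a $\mathbb{Z}v$-periodic family of parallel, equally spaced bands, and resolving all of them turns $\bigcup_k L_k$ into a disjoint union of properly embedded ``staircase'' curves constituting $\pi^{-1}(\gamma')$. Following one of these curves through the resolutions and recording its period vector — the generator of the subgroup of $\mathbb{Z}^2$ (acting by translations) that preserves it — computes $[\gamma']\in H_1(T^2)$; carrying out this bookkeeping gives $[\gamma']=\pm(v-2u)=\pm(p-2t,\,q-2h)$.

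To conclude, note that $\det(v-2u\mid u)=\det(v\mid u)=1$, so $v-2u$ is primitive. Since $\gamma'$ is a simple closed curve lying in $T^2$ (that it is connected, i.e.\ a knot, is part of the setup recalled above) and any simple closed curve on the standard $T^2\subset S^3$ carrying a primitive homology class $(a,b)$ is the torus knot $T_{a,b}$, we get $\gamma'=T_{|p-2t|,|q-2h|}$; the degenerate case $p-2t=0$ or $q-2h=0$ forces by primitivity $|q-2h|=1$ or $|p-2t|=1$, so $\gamma'$ is then the unknot, still consistent with the formula. Independence of the chosen pair of adjacent strands comes out for free.

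The one place that needs genuine care is the middle paragraph: getting the combinatorics of the iterated band surgeries right, and especially the orientation and left/right conventions, so that the recorded drift is $v-2u$ and not $v+2u$ or $2u-v$ — a wrong sign there would contradict the statement. Small examples fix the conventions: a pinch on $T_{2,3}$, $T_{3,4}$, $T_{4,9}$ must produce $T_{0,1}$, $T_{1,2}$, $T_{2,5}$ respectively, in line with known pinch numbers. A tidier, equivalent route is to conjugate by the element of $\mathrm{SL}_2(\mathbb{Z})$ sending $v\mapsto(1,0)$ and $u\mapsto(0,1)$, reducing to a pinch on the standard $(1,0)$-curve along a band running once around the $(0,1)$-direction; the same orientation-sensitive computation is still needed, but in a normalised picture.
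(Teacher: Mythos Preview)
The paper does not give its own proof of this lemma: it is quoted from \cite{batson2012nonorientable} with the proof attributed to \cite[Lemma~2.1]{jabuka2019nonorientable}, so there is nothing in the paper to compare against directly. Your outline is the natural argument and is essentially the one carried out in Jabuka--Van~Cott: pass to the universal cover, identify adjacent strands with consecutive translates $L_k=\mathbb{R}v+ku$ of a line of slope $v=(p,q)$, and read off the homology class of the surgered curve as $\pm(v-2u)$. Your verification that the unique $u=(t,h)$ with $t\in\{0,\dots,p-1\}$, $h\in\{0,\dots,q-1\}$ and $ph-qt=1$ matches the congruence conditions in the statement is correct (for $p,q\ge 2$; note that $ph-qt=1$, not $-1$, really does hold in that range, since $1-pq$ falls outside $\{p+q-pq,\dots,pq-p-q\}$), and the primitivity check $\det(v-2u\mid u)=\det(v\mid u)=1$ is a clean way to finish.

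The only substantive gap is the one you flag yourself: the ``bookkeeping'' that the staircase curve in the cover has period $v-2u$ is asserted and checked on examples rather than written out. This is exactly the content of Jabuka--Van~Cott's argument, and it does require care with the orientation conventions of the nonorientable band. Your suggested normalisation via $\mathrm{SL}_2(\mathbb{Z})$ sending $(v,u)\mapsto((1,0),(0,1))$ is the cleanest way to make it rigorous: in those coordinates the pinch is a single vertical band on the $(1,0)$-curve, and one sees directly that the result is the $(1,-2)$-curve (or $(-1,2)$), which pulls back to $v-2u$. If you want a self-contained proof rather than a sketch, that one local computation is what remains to be written down.
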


Suppose performing a pinch move on $T_{p,q}$ yields $T_{r,s}$. This pinch move can either be \emph{positive} or \emph{negative}; it is called positive if $rq-sp>0$ and negative if $rq-sp<0$ (c.f. \cite[Lemma 2.4]{jabuka2019nonorientable}). Note that since $T_{p,q}=T_{q,p}$ and $T_{r,s}=T_{s,r}$, performing a pinch move on $T_{q,p}$ yields $T_{s,r}$. Further note that the sign of a pinch move depends on the order of $p$ and $q$; that is, the sign of the pinch move performed on $T_{p,q}$ yielding $T_{r,s}$ is opposite of the sign of the pinch move performed on $T_{q,p}$ yielding $T_{s,r}$. Finally, the \emph{pinch number} of $T_{p,q}$, which we denote by 
$\vartheta(T_{p,q})$, is the minimal number of pinch moves needed to turn $T_{p,q}$ into the unknot (which is equivalent to the first Betti number of the nonorientable surface $F_{p,q}$ constructed by Batson \cite{batson2012nonorientable} discussed in the introduction).
Using Lemma \ref{lem:pinch}, we can prove the following result, which we will use later.

\begin{lem}\label{pinchlemma} Let $p\geq 2 $ and $k\ge 1$.
Performing a pinch move on the torus knot $T_{p,kp\pm1}$ yields the torus knot $T_{p-2,k(p-2)\pm1}$\footnote{When $p=2$ and $k=1$, this forumla gives $T_{0,-1}$. However (as the proof shows), Lemma \ref{lem:pinch} yields $T_{0,1}$. Since $T_{0,1}$ and $T_{0,-1}$ are isotopic to the unknot, we find it convenient to eliminate the use of absolute values, writing $T_{p-2,k(p-2)\pm1}$ instead of $T_{|p-2|,|k(p-2)\pm1|}$}. Consequently,
\begin{align*}
\vartheta(T_{p,kp\pm1})= \begin{cases}
\frac{p-1}{2} &\mbox{if } p \mbox{ is odd,} \\
\frac{p}{2} &\mbox{if } p \mbox{ is even and } kp\pm1\neq p-1,\\
\frac{p-2}{2} &\mbox{if } p \mbox{ is even and } kp\pm1= p-1.
\end{cases}
\end{align*}
Moreover, the pinch move performed on $T_{p,kp+1}$ is negative, and the pinch move performed on $T_{p,kp-1}$ is positive if $p>2$ and negative if $p=2$. 
\label{lem:pinchmoves}
\end{lem}

\begin{proof}
We perform a pinch move on $T_{p,kp\pm1}$ with $p,k$ as in the statement of the lemma. Set $t=p-1$ and $h=k(p-1)+1$. Then 
\[
\begin{split}
    t(kp+1)=kp^2+p-kp-1 &\equiv -1\pmod{p}, \\
    hp=p(kp+1)-kp &\equiv 1\pmod{kp+1}.
\end{split}
\]
Now since $|p-2t|=p-2$ and $|kp+1-2h|=k(p-2)+1$, \Cref{lem:pinch} shows that performing a pinch move on $T_{p,kp+1}$ yields $T_{p-2,k(p-2)+1}$. Moreover, this pinch move is clearly negative.

Next let $t=1$ and $h=k$. Then 
\[
\begin{split}
    t(kp-1) &\equiv -1\pmod{p}, \\
    hp=kp &\equiv 1\pmod{kp-1}.
\end{split}
\]
Note that: $|p-2t|=p-2$ for all $p$; $|kp-1-2h|=k(p-2)-1$ for all $p\ge3$; and $|kp-1-2h|=1$ when $p=2$. Thus \Cref{lem:pinch} shows that performing a pinch move on $T_{p,kp-1}$ yields $T_{p-2,k(p-2)-1}$ when $p\ge3$ and $T_{0,1}$ when $p=2$. Moreover, this pinch move is clearly positive if $p>2$ and negative if $p=2$.

Now, if $p$ is odd, then by performing $\frac{p-1}{2}$ pinch moves to $T_{p,kp\pm1}$, we obtain the torus knot $T_{1,k\pm1}$, which bounds a disk; thus $\vartheta(T_{p,kp\pm1})=\frac{p-1}{2}$. If $p$ is even and $kp\pm1\neq p-1$, then by performing $\frac{p-2}{2}$ pinch moves to $T_{p,kp\pm1}$, we obtain the torus knot $T_{2,2k\pm1}$, which bounds a M{\"o}bius band; thus $\vartheta(T_{p,kp\pm1})=\frac{p}{2}$. If $p$ is even and $kp\pm1= p-1$, then by performing $\frac{p-2}{2}$ pinch moves to $T_{p,p-1}$, we obtain the torus knot $T_{2,1}$, which bounds a disk; thus $\vartheta(T_{p,p-2})=\frac{p-2}{2}$.
\end{proof}

It was conjectured in \cite{batson2012nonorientable} that $\gamma_4(T_{p,q})=\vartheta(T_{p,q})$. This conjecture is clearly true in some basic cases, as it is not hard to observe that $\gamma_4(T_{2,q})=1=\vartheta(T_{2,q})$ for all odd $q$ and $\gamma_4(T_{3,q})=1=\vartheta(T_{3,q})$ for all $q$ relatively prime to 3. Jabuka and van Cott \cite{jabuka2019nonorientable} studied Batson's conjecture and showed that it is true for several infinite families of torus knots, using pinch moves and the lower bound from \Cref{lem:OSSbound}. Their main result is the following.

\begin{thm}[{\cite[Theorem 1.10 and Corollary 1.11]{jabuka2019nonorientable}}]
Let $p, q > 1$ be relatively prime integers and suppose that $q$ is odd. Suppose further that there is a sequence of $n \geq 1$ pinch moves $T_{p_i,q_i} \to T_{p_{i-1},q_{i-1}}$ starting at $T_{p,q}=T_{p_n, q_n}$ and ending at the unknot $T_{p_0,1}$ with $p_0 \geq 0$ for integers $p_i, q_i \geq 1$, $i \in \{1, \dots, n\}$. For all $i \in \{1, \dots, n\}$, let $\varepsilon_i \in \{\pm 1\}$ be $+1$ if the pinch move $T_{p_i,q_i} \to T_{p_{i-1},q_{i-1}}$ is positive and $-1$ otherwise. For all $i \in \{1, \dots, n-1\}$, $m_i = \frac{p_{i+1}+\varepsilon_i \varepsilon_{i+1}p_{i-1}}{p_i}$ is an even integer.
\begin{enumerate}[(a)]
    \item Let $p$ be odd and $p > q$. 
    If $q_1 \equiv \varepsilon_1 \pmod{4}$ and $m_i \equiv 2 \pmod 4$ for all $i \in \{1, \dots, n-1\}$, then $\gamma_4\left(T_{p,q}\right) = n.$
    \item Let $p$ be even. If $\epsilon_i =1$ for all $i\in \{1, \dots,n\}$, then $\gamma_4\left(T_{p,q}\right) = n.$
\end{enumerate}
In both cases, $\gamma_4\left(T_{p,q}\right) = \vartheta\left(T_{p,q}\right) =n$.
\label{thm:jabukavanCottpinch}
\end{thm}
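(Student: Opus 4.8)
The plan is to combine the upper bound coming from pinch moves with the lower bound of \Cref{lem:OSSbound}, and show that under the stated hypotheses the two meet. Since a pinch move is a nonorientable band move, a sequence of $n$ pinch moves from $T_{p,q}$ to the unknot produces a nonorientable surface in $B^4$ built from $n$ bands and a disk, so $\gamma_4(T_{p,q}) \leq \vartheta(T_{p,q}) \leq n$. The work is therefore entirely in establishing the matching lower bound $\gamma_4(T_{p,q}) \geq n$, and for this I would plug into $|\upsilon(T_{p,q}) - \tfrac{\sigma(T_{p,q})}{2}| \leq \gamma_4(T_{p,q})$ and show that the left-hand side equals $n$ under the hypotheses in (a) and (b).

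The key is to track how $\upsilon$ and $\sigma$ change along a single pinch move $T_{p_i,q_i} \to T_{p_{i-1},q_{i-1}}$. A pinch move is a nonorientable band move, but it can also be understood via an unknotting/crossing-change-type relation on the torus, so that $\sigma$ and $\upsilon$ each change by a controlled amount depending only on the sign $\varepsilon_i$ of the move. Concretely, I would establish (or quote from \cite{jabuka2019nonorientable}) that $\tfrac{1}{2}\sigma(T_{p_i,q_i}) - \tfrac{1}{2}\sigma(T_{p_{i-1},q_{i-1}})$ and $\upsilon(T_{p_i,q_i}) - \upsilon(T_{p_{i-1},q_{i-1}})$ are each controlled so that the difference of differences, $\big(\upsilon - \tfrac{\sigma}{2}\big)(T_{p_i,q_i}) - \big(\upsilon - \tfrac{\sigma}{2}\big)(T_{p_{i-1},q_{i-1}})$, equals $\varepsilon_i$ (or, with the relevant sign conventions, $-\varepsilon_i$), provided the local congruence conditions hold. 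The hypotheses $q_1 \equiv \varepsilon_1 \pmod 4$ and $m_i \equiv 2 \pmod 4$ in case (a), and $\varepsilon_i = 1$ for all $i$ in case (b), are exactly what is needed to guarantee that these single-step changes all have the same sign, so that they add up without cancellation. Telescoping over $i = 1, \dots, n$ and using that $\upsilon - \tfrac{\sigma}{2}$ vanishes on the unknot $T_{p_0,1}$, we get $\big|\upsilon(T_{p,q}) - \tfrac{\sigma(T_{p,q})}{2}\big| = n$, hence $\gamma_4(T_{p,q}) \geq n$.

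The main obstacle is the per-step analysis: one must compute the signature jump and the $\upsilon$ jump under a pinch move precisely, not just up to $\pm 1$, and identify exactly which arithmetic condition on the continued-fraction / pinch data forces the jumps to line up. For the signature, the integers $m_i = (p_{i+1} + \varepsilon_i \varepsilon_{i+1} p_{i-1})/p_i$ being even, and congruent to $2 \bmod 4$, is what pins down $\sigma$ modulo the relevant period; this likely uses the recursive behavior of torus knot signatures under the Euclidean-type algorithm implicit in the pinch sequence. For $\upsilon$, I would use that torus knots are $L$-space knots, so $\upsilon$ is computable from the Alexander polynomial / the staircase complex, and that (as noted after \Cref{lem:OSSbound}) for $L$-space knots the Golla--Marengon refinement agrees with \Cref{lem:OSSbound}; this gives a clean formula to difference. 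Once the single-step statements are in hand, the telescoping and the case analysis of (a) versus (b) are bookkeeping. Finally, combining $\gamma_4(T_{p,q}) \geq n$ with $\gamma_4(T_{p,q}) \leq \vartheta(T_{p,q}) \leq n$ yields $\gamma_4(T_{p,q}) = \vartheta(T_{p,q}) = n$ in both cases, as claimed.
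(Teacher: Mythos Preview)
Your proposal is correct and matches the approach that the paper attributes to Jabuka and Van Cott: the paper does not reprove \Cref{thm:jabukavanCottpinch} but explicitly notes that it ``heavily relies on the lower bound $|\upsilon - \tfrac{\sigma}{2}|$'' from \Cref{lem:OSSbound}, with Jabuka and Van Cott computing this quantity in terms of the pinch data $p_i, q_i, \varepsilon_i, m_i$ (their Theorems 1.7 and 1.8). Your outline---upper bound from the pinch sequence, lower bound by telescoping the per-step change in $\upsilon - \tfrac{\sigma}{2}$ and invoking the arithmetic hypotheses to prevent cancellation---is exactly this strategy.
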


\Cref{thm:jabukavanCottpinch} heavily relies on the lower bound $|\upsilon - \frac{\sigma}{2}|$ on $\gamma_4$ provided by Ozsv\'ath, Stipsicz and Szab\'o (see \Cref{lem:OSSbound}). In fact, Jabuka and Van Cott give formulas for this lower bound for all torus knots $T_{p,q}$ in terms of the integers $p_i, q_i, \varepsilon_i$ and $m_i$ from \Cref{thm:jabukavanCottpinch}; see \cite[Theorems 1.7 and 1.8]{jabuka2019nonorientable}.
Using these formulas along with Lemma \ref{lem:pinchmoves} and Theorem \ref{thm:jabukavanCottpinch}, one can prove the following.

\begin{cor} \hfill
\begin{enumerate}[(a)]
    \item If $p$ is odd and $q\equiv p+1\text{ or }2p-1\pmod{2p}$, then $\gamma_4(T_{p,q})=\frac{p-1}{2}$.
    \item If $p$ is even and $q\equiv p-1 \text{ or }2p-1\pmod{2p}$, then $\gamma_4(T_{p,q})\in\left\{\frac{p-2}{2},\frac{p}{2}\right\}$.
\end{enumerate}
\label{cor:jabukavanCottpinch}
\end{cor}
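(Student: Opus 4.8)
The plan is to run the explicit pinch sequences of \Cref{lem:pinchmoves} through \Cref{thm:jabukavanCottpinch}, and, in the one part of case~(b) where \Cref{thm:jabukavanCottpinch} does not produce an equality, to supplement the pinch-number upper bound with the lower bound $|\upsilon-\tfrac{\sigma}{2}|\le\gamma_4$ of \Cref{lem:OSSbound}, evaluated via the Jabuka--Van Cott formula for $\upsilon-\tfrac{\sigma}{2}$ of a torus knot (\cite[Theorems 1.7 and 1.8]{jabuka2019nonorientable}). Since \Cref{thm:jabukavanCottpinch} requires the second parameter of each knot to be odd (and, in part~(a), the first parameter to exceed the second), the first step is to use $T_{a,b}=T_{b,a}$ to bring each knot into the required form, recalling that swapping the two parameters reverses the sign of every pinch move. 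We may assume $p\ge 3$ in~(a) and $p\ge 4$ in~(b), the remaining cases being trivial.

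\emph{Case (a): $p$ odd.} If $q\equiv p+1\pmod{2p}$, write $q=kp+1$ with $k$ odd; then $q$ is even, so we work with $T_{p,q}=T_{kp+1,p}$. By \Cref{lem:pinchmoves} this knot admits a pinch sequence $T_{kp+1,p}\to T_{k(p-2)+1,p-2}\to\cdots\to T_{k+1,1}$ of length $\tfrac{p-1}{2}$ ending at the unknot, and since these are the reverses of the (negative) pinch moves on $T_{p,kp+1}$, every move in it is positive. As $kp+1$ is even and $p$ is odd, \Cref{thm:jabukavanCottpinch}(b) applies and gives $\gamma_4(T_{p,q})=\tfrac{p-1}{2}=\vartheta(T_{p,q})$, the last equality again by \Cref{lem:pinchmoves}. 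If instead $q\equiv 2p-1\pmod{2p}$, write $q=kp-1$ with $k$ even; then $q$ is odd and $q>p$, so we work with $T_{p,q}=T_{kp-1,p}$ (whose first parameter $kp-1$ is odd). By \Cref{lem:pinchmoves} this knot admits a pinch sequence $T_{kp-1,p}\to\cdots\to T_{k-1,1}$ of length $\tfrac{p-1}{2}$, every move of which is negative. I would then verify the numerical hypotheses of \Cref{thm:jabukavanCottpinch}(a): indexing from the unknot one has $p_j=(2j+1)k-1$, $q_j=2j+1$, and $\varepsilon_j=-1$ for all $j$, so $q_1=3\equiv -1\equiv\varepsilon_1\pmod 4$ and $m_i=\tfrac{p_{i+1}+p_{i-1}}{p_i}=2\equiv 2\pmod 4$ for every $i$; hence \Cref{thm:jabukavanCottpinch}(a) gives $\gamma_4(T_{p,q})=\tfrac{p-1}{2}=\vartheta(T_{p,q})$.

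\emph{Case (b): $p$ even.} Here $q=kp-1$ is odd, with $k$ odd if $q\equiv p-1\pmod{2p}$ and $k$ even if $q\equiv 2p-1\pmod{2p}$; the only value with $q\le p$ is $q=p-1$ (that is, $k=1$). For $q=p-1$, the pinch sequence $T_{p,p-1}\to T_{p-2,p-3}\to\cdots\to T_{2,1}$ of \Cref{lem:pinchmoves} ends at the unknot with all moves positive, so \Cref{thm:jabukavanCottpinch}(b) (or Batson's original computation \cite{batson2012nonorientable}) gives $\gamma_4(T_{p,p-1})=\tfrac{p-2}{2}=\vartheta(T_{p,p-1})\in\{\tfrac{p-2}{2},\tfrac{p}{2}\}$. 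For $q>p$ (so $k\ge 2$), \Cref{lem:pinchmoves} gives $\vartheta(T_{p,q})=\tfrac{p}{2}$ and hence $\gamma_4(T_{p,q})\le\tfrac{p}{2}$; but now the pinch sequence reads $T_{p,kp-1}\to\cdots\to T_{2,2k-1}\to T_{0,1}$, whose final move is negative, so \Cref{thm:jabukavanCottpinch} no longer yields an equality. Instead I would apply \Cref{lem:OSSbound} together with the Jabuka--Van Cott formula for $\upsilon-\tfrac{\sigma}{2}$ evaluated on the data of this sequence (for which $p_j=2j$, $m_i=2$, $\varepsilon_1=-1$, and $\varepsilon_j=1$ for $j\ge 2$); this yields $\gamma_4(T_{p,q})\ge\tfrac{p-2}{2}$. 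Combining the two bounds gives $\gamma_4(T_{p,q})\in\{\tfrac{p-2}{2},\tfrac{p}{2}\}$, as claimed.

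The argument is essentially all bookkeeping: keeping track of which of $p$ and $q$ is odd (so that \Cref{thm:jabukavanCottpinch} is applicable), maintaining the sign conventions for pinch moves consistently under parameter-swapping, and---in case~(b)---substituting correctly into the Jabuka--Van Cott formula to confirm that the resulting lower bound is $\tfrac{p-2}{2}$. I expect this last substitution to be the most error-prone part, though it is purely mechanical and uses no ideas beyond \Cref{lem:pinchmoves}, \Cref{thm:jabukavanCottpinch}, and \Cref{lem:OSSbound}.
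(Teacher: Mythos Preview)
Your proposal is correct and follows essentially the same approach that the paper indicates: the paper does not spell out a proof of this corollary but states that it follows from \Cref{lem:pinchmoves}, \Cref{thm:jabukavanCottpinch}, and the Jabuka--Van~Cott formula for $\upsilon-\tfrac{\sigma}{2}$, which is exactly the toolkit you use. Your parameter-swapping to meet the parity hypotheses of \Cref{thm:jabukavanCottpinch}, and your fallback to the raw $|\upsilon-\tfrac{\sigma}{2}|$ formula in the one subcase of (b) where not all $\varepsilon_i$ are positive, are precisely the moves the paper has in mind.
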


As mentioned in the introduction, Lobb \cite{lobb} disproved Batson's conjecture by showing that $\gamma_4(T_{4,9})=1$, while $\vartheta(T_{4,9})=2$. This was achieved by finding a nonorientable band move from $T_{4,9}$ to the Stevedore knot, which is slice. Generalizing his approach, an infinite family of counterexamples was given by Longo in \cite{longo}.

\begin{thm}[\cite{longo}] For $n\ge2$, we have $\vartheta\left(T_{4n,(2n\pm 1)^2}\right)=2n$, while $\gamma_4\left(T_{4n,(2n\pm 1)^2}\right)\le 2n-1$. \label{thm:longo}
\end{thm}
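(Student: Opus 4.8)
There are two independent assertions in \Cref{thm:longo}: the pinch-number computation $\vartheta\left(T_{4n,(2n\pm 1)^2}\right)=2n$, and the upper bound $\gamma_4\left(T_{4n,(2n\pm 1)^2}\right)\le 2n-1$.

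I would deduce the first from \Cref{lem:pinchmoves}. Expanding squares gives $(2n\pm1)^2=4n(n\pm1)+1$, so $T_{4n,(2n\pm1)^2}=T_{p,kp+1}$ with $p=4n$ even and $k=n\pm1\ge 1$ (here $n\ge2$ is used when the sign is $-$). Since $kp+1=(2n\pm1)^2>4n-1=p-1$ for $n\ge2$, the second case of \Cref{lem:pinchmoves} applies and yields $\vartheta\left(T_{4n,(2n\pm1)^2}\right)=\frac{p}{2}=2n$.

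The substantive part is the upper bound, which I would establish by constructing an explicit nonorientable surface $F\subset B^4$ with $\partial F=T_{4n,(2n\pm1)^2}$ and $b_1(F)=2n-1$, presented as a banded movie: a finite sequence of band (saddle) moves carrying $T_{4n,(2n\pm1)^2}$ to a slice knot. The goal is a sequence containing exactly $2n-1$ incoherent bands --- one fewer than the naive length-$2n$ pinch sequence --- with any coherent bands appearing as cancelling pairs, or as fissions splitting off unknotted components, so that they do not contribute to the first Betti number. The prototype is Lobb's construction \cite{lobb}, where a single incoherent band move takes $T_{4,9}$ to the slice Stevedore knot; this is exactly the case $n=1$ (plus sign) of the bound. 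For general $n$ the plan is to combine a controlled run of pinch moves --- tracked via \Cref{lem:pinch}, which keeps the knot inside a one-parameter family of torus knots --- with one or more Lobb-type non-pinch bands chosen so the terminal knot is demonstrably slice. Granting such a movie, the rest is Euler-characteristic bookkeeping: capping the terminal slice knot with its slice disk, a cobordism built from $c$ incoherent bands and carrying no net orientable genus produces a surface with one boundary circle and Euler characteristic $1-c$, i.e.\ a nonorientable surface of genus $c$ with one boundary component, so $b_1=c$; taking $c=2n-1$ gives $\gamma_4\left(T_{4n,(2n\pm1)^2}\right)\le 2n-1$, and together with the first part this is the strict inequality $\gamma_4<\vartheta$.

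The main obstacle is the diagrammatic heart of the construction: locating the right bands on a flat-torus diagram of $T_{4n,(2n\pm1)^2}$ so that the incoherent-band count comes out to $2n-1$, and then certifying that the terminal knot is slice --- presumably by exhibiting an explicit ribbon disk or by recognizing the knot as a known slice knot (as with the Stevedore knot in the base case). A subsidiary point that genuinely needs checking, in view of \Cref{rem:difference}, is that the surface really is nonorientable: a cobordism of Euler characteristic $1-c$ between two knots could a priori be an orientable surface of positive genus, so one must verify that at least one band in the sequence is attached along an orientation-reversing arc.
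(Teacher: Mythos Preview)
This theorem is quoted from \cite{longo} and the paper does not supply its own proof; it is used as input to \Cref{prop:families} and \Cref{prop:longoextension}. So there is no in-paper argument to compare against for the substantive upper bound.

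Your treatment of the pinch-number claim is correct and is exactly how the paper handles it elsewhere: in the proof of \Cref{prop:families} the identity $(2n\pm1)^2=4n(n\pm1)+1$ is used together with \Cref{lem:pinchmoves} to compute $\vartheta$.

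For the upper bound $\gamma_4\le 2n-1$, your outline is the right shape---a sequence of $2n-1$ nonorientable band moves from $T_{4n,(2n\pm1)^2}$ to a slice knot---and you are honest that the ``diagrammatic heart'' is missing. That is precisely the content of Longo's paper: he produces an explicit sequence of $2n-2$ band moves followed by one further band move landing on a slice knot (generalizing Lobb's single band to the Stevedore knot). Neither this paper nor your proposal reproduces those bands, so both defer to \cite{longo} for the actual construction. Your proposal is therefore an accurate summary of what must be done, but not a self-contained proof; the paper is in the same position, which is why the result is stated as a citation.
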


In her masters thesis, Tairi \cite{tairi} gave another infinite family of counterexamples to Batson's conjecture.

\begin{thm}[\cite{tairi}]\hfill
\begin{enumerate}[(a)]
    \item $\vartheta(T_{4,11})=2$, while $\gamma_4(T_{4,11})=1$.
    \item Let $n\ge0$ and $m\ge 2$. Then $\vartheta(T_{4n+2m+2,10n+6m+5})=m+1$, while $\gamma_4(T_{4n+2m+2,10n+6m+5})=m$.
\end{enumerate}\label{thm:tairi}
\end{thm}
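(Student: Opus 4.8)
\medskip

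\noindent The plan is to obtain, for each family, the upper bounds on $\gamma_4$ and $\vartheta$ from explicit nonorientable cobordisms in $B^4$, and the matching lower bounds from a concordance invariant (for $\gamma_4$) together with a combinatorial analysis of pinch-move sequences (for $\vartheta$).

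For (a): the value $\vartheta(T_{4,11})=2$ follows from \Cref{lem:pinchmoves}, since $11=3\cdot 4-1$ and $11\neq 4-1$; a single pinch move carries $T_{4,11}=T_{4,3\cdot 4-1}$ to $T_{2,3\cdot 2-1}=T_{2,5}$, which is nontrivial, so $\vartheta(T_{4,11})\ge 2$, while $T_{2,5}$ bounds a M\"obius band, so $\vartheta(T_{4,11})\le 2$. For $\gamma_4(T_{4,11})=1$ it suffices, as $\gamma_4\ge 1$ always, to produce a smooth M\"obius band in $B^4$ bounded by $T_{4,11}$. Mimicking Lobb's treatment of $T_{4,9}$, I would exhibit a single nonorientable band move from a diagram of $T_{4,11}$ to a smoothly slice knot and cap the trace of that band with the slice disk; the real work here is drawing the band and recognizing the terminal knot as slice.

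For (b): write $p=4n+2m+2$, $q=10n+6m+5$, and note that $\gcd(p,q)=1$ forces $\gcd(m,2n+1)=1$. When $n=0$ one has $q=3p-1$, so \Cref{lem:pinchmoves} applies: a chain of $m-1$ pinch moves carries $T_{2m+2,6m+5}$ to $T_{4,11}$, whence $\vartheta(T_{p,q})\le(m-1)+\vartheta(T_{4,11})=m+1$ and, combining with the M\"obius band from (a), $\gamma_4(T_{p,q})\le(m-1)+1=m$. For $n\ge 1$ the exponent $q$ is no longer of the form $kp\pm 1$, and I would instead compute, using \Cref{lem:pinch}, that a single pinch move sends $T_{p,q}$ to a torus knot again of the form $T_{4n'+2(m-1)+2,\,10n'+6(m-1)+5}$ for an explicit $n'=n'(n,m)$ (the congruence for the pinch parameter $t$ simplifies, after clearing a factor $2$, to $mt\equiv-1\pmod{2n+m+1}$); iterating $m-1$ times reduces to the ``$m=1$ slice'' $T_{4N+4,10N+11}$, for which the analogue of (a) gives $\gamma_4=1$ and $\vartheta=2$, and stacking cobordisms yields $\gamma_4(T_{p,q})\le m$ and $\vartheta(T_{p,q})\le m+1$. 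For the lower bound $\gamma_4(T_{p,q})\ge m$ I would invoke \Cref{lem:OSSbound}, $\gamma_4\ge|\upsilon(T_{p,q})-\tfrac12\sigma(T_{p,q})|$, and evaluate the right-hand side---either directly, or through the recursive pinch-move formulas underlying \Cref{thm:jabukavanCottpinch}---to find that it equals $m$.

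The main obstacle is the remaining inequality $\vartheta(T_{p,q})\ge m+1$: since $\gamma_4=m$ only gives $\vartheta\ge m$, and since these knots are counterexamples to Batson's conjecture, no classical concordance bound can close the gap, so one must analyze all pinch-move sequences directly. By \Cref{lem:pinch}, each torus knot has at most two pinch-move neighbours---one for each of the diagrams $T_{p,q}$ and $T_{q,p}$---and I would check, via the same congruence computations, that from the knots in these families both diagrams produce the \emph{same} neighbour, so the descending pinch path is unique; an induction on $m$ (bookkeeping the parameter $n'(n,m)$) then identifies that unique path with the length-$(m+1)$ chain used above, proving it optimal and hence $\vartheta(T_{p,q})=m+1$. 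Carrying out this induction, together with verifying all the underlying number-theoretic identities and producing the explicit slice knots needed in the base cases, is the most laborious part of the argument.
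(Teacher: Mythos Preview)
The paper does not prove this theorem; it is quoted from Tairi's thesis, and the paper offers only a two-sentence sketch: the M\"obius band for $T_{4,11}$ comes from a single nonorientable band move to the Stevedore knot (which is slice), and for (b) one performs $m-1$ pinch moves on $T_{4n+2m+2,10n+6m+5}$ to arrive at $T_{4,11}$ and then caps with that M\"obius band.

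Your treatment of (a) matches this outline. For (b), your plan diverges from the paper's sketch in a way that opens a genuine gap. The sketch the paper attributes to Tairi asserts that the pinch sequence terminates at $T_{4,11}$ itself, so the single base case from (a) suffices. You instead propose that each pinch move decrements $m$ while sending $n$ to some $n'(n,m)$, so that after $m-1$ steps you land at $T_{4N+4,10N+11}$ with $N$ possibly positive, and then appeal to ``the analogue of (a)''. But establishing $\gamma_4(T_{4N+4,10N+11})=1$ for $N>0$ is not covered by (a); it would require, for each such $N$, a new explicit band move to a slice knot. That is precisely the hard, non-algorithmic step in this business, and you give no indication of how to carry it out. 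The paper's sketch sidesteps this by funneling everything to the single knot $T_{4,11}$.

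Your argument for the lower bound $\vartheta\ge m+1$ also rests on a misconception. You say each torus knot has ``at most two pinch-move neighbours --- one for each of the diagrams $T_{p,q}$ and $T_{q,p}$'' and propose to verify that for this family the two coincide. But as the paper notes immediately after \Cref{lem:pinch}, if a pinch on $T_{p,q}$ yields $T_{r,s}$ then a pinch on $T_{q,p}$ yields $T_{s,r}$; the pinch-move neighbour is unique for \emph{every} torus knot, not just those in this family. Your conclusion survives --- $\vartheta$ is simply the length of the unique pinch path, so computing that path and counting suffices --- but there is no family-specific verification to perform.
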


As in Lobb's proof that $T_{4,9}$ bounds a M{\" o}bius band, Tairi's proof that $T_{4,11}$ bounds a M{\" o}bius band relies on finding a nonorientable band move from $T_{4,11}$ to the Stevedore knot. To show that the torus knot $T_{4n+2m+2,10n+6m+5}$ bounds a nonorientable surface $F$ with $g(F)=m$, Tairi shows that performing $m-1$ pinch moves turns $T_{4n+2m+2,10n+6m+5}$ into $T_{4,11}$, which satisfies $\gamma_4(T_{4,11})=1$. 

Using this idea, one can start with any torus knot whose nonorientable 4-ball genus is strictly less than its pinch number and create an infinite family of such torus knots by reversing the pinch move operation. For each of Longo's torus knots in \Cref{thm:longo}, we can then produce an infinite family of such torus knots.

\begin{prop}
For all $n\ge2$ and $k\ge0$, we have\label{prop:families}
\begin{align*}
    \vartheta(T_{4n+2k,(4n+2k)(n\pm1)+1})=2n+k, \qquad \text{while } \gamma_4(T_{4n+2k,(4n+2k)(n\pm1)+1})\le 2n+k-1.
\end{align*}
\end{prop}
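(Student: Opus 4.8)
The plan is to realize each torus knot in the statement as an iterated \emph{pinch-move ancestor} of one of Longo's knots $T_{4n,(2n\pm1)^2}$, read off the pinch number directly from \Cref{lem:pinchmoves}, and then push Longo's upper bound (\Cref{thm:longo}) back up the tower of pinch moves. Since the case $k=0$ is precisely \Cref{thm:longo}, I would assume $k\ge1$ throughout.

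For the pinch-number equality, I would write $P=4n+2k$ and $M=n\pm1$, so that the knot in question is $T_{P,\,PM+1}$ with $P$ even and $M\ge1$ (here $n\ge2$ is used only to guarantee $M\ge1$). Because $PM+1>P-1$, \Cref{lem:pinchmoves} applies directly, with its ``$p$'' taken to be $P$ and its ``$k$'' taken to be $M$, and yields $\vartheta\big(T_{P,\,PM+1}\big)=P/2=2n+k$. This disposes of the first equality with no further work.

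For the genus bound, I would iterate the first assertion of \Cref{lem:pinchmoves}: one pinch move sends $T_{P',\,P'M+1}$ to $T_{P'-2,\,(P'-2)M+1}$, and every pair $\big(4n+2k-2j,\,(4n+2k-2j)M+1\big)$ arising in the chain is coprime since $(4n+2k-2j)M+1\equiv1\pmod{4n+2k-2j}$. Starting from $T_{4n+2k,\,(4n+2k)M+1}$ and performing $k$ pinch moves therefore produces a chain ending at $T_{4n,\,4nM+1}$, and the elementary identity $4nM+1=4n(n\pm1)+1=(2n\pm1)^2$ identifies this terminal knot as exactly Longo's $T_{4n,(2n\pm1)^2}$. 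Finally, since a pinch move is a nonorientable band move, I would cap $T_{4n,(2n\pm1)^2}$ in $B^4$ by a nonorientable surface with first Betti number $\gamma_4(T_{4n,(2n\pm1)^2})\le 2n-1$ (\Cref{thm:longo}) and stack the $k$ band-move cobordisms on top of it inside $S^3\times[0,1]$; a short Euler-characteristic count shows that each attached band increases the first Betti number of the resulting (connected, one-boundary-component, nonorientable) surface by exactly one, so that $\gamma_4\big(T_{4n+2k,(4n+2k)(n\pm1)+1}\big)\le k+(2n-1)=2n+k-1$.

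The argument is essentially a repackaging of \Cref{thm:longo} together with the pinch-move calculus, so I do not anticipate a genuine obstacle; the only points that need care are matching the sign conventions of \Cref{lem:pinch}/\Cref{lem:pinchmoves} to the ``$PM+1$'' family (checking that one stays in the ``$+1$'' case at every stage) and verifying the identity $4n(n\pm1)+1=(2n\pm1)^2$, along with the standard fact that a single nonorientable band move changes $\gamma_4$ by at most one.
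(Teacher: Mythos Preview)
Your proposal is correct and follows essentially the same approach as the paper: iterate \Cref{lem:pinchmoves} $k$ times to descend from $T_{4n+2k,(4n+2k)(n\pm1)+1}$ to $T_{4n,(2n\pm1)^2}$, invoke \Cref{thm:longo}, and read off both the pinch number and the $\gamma_4$ bound. The only cosmetic difference is that you extract $\vartheta=2n+k$ directly from the formula in \Cref{lem:pinchmoves}, whereas the paper obtains it as $k+\vartheta(T_{4n,(2n\pm1)^2})=k+2n$; these are of course equivalent.
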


\begin{proof}
By repeatedly applying Lemma \ref{lem:pinchmoves}, performing $k$ pinch moves on $T_{4n+2k,(4n+2k)(n\pm1)+1}$ yields the torus knot $T_{4n,4n(n\pm1)+1}=T_{4n,(2n\pm1)^2}$. By \Cref{thm:longo}, $\vartheta(T_{4n,(2n\pm1)^2})=2n$ and $\gamma_4(T_{4n,(2n\pm1)^2})\le 2n-1$. Consequently, $\vartheta(T_{4n+2k,(4n+2k)(n\pm1)+1})=2n+k$ and $\gamma_4(T_{4n+2k,(4n+2k)(n\pm1)+1})\le 2n+k-1$.
\end{proof}

In \Cref{sec:calculations}, we will see that when $n$ is even, $\gamma_4(T_{4n+2k,(4n+2k)(n\pm1)+1})= 2n+k-1$. In particular, for $k=0$, this shows that half of Longo's examples in \Cref{thm:longo} satisfy $\gamma_4(T_{4n+2k,(4n+2k)(n\pm1)+1})=\vartheta(T_{4n+2k,(4n+2k)(n\pm1)+1})-1$.

\section{Calculations: Proofs of Proposition \ref{prop:longoextension}, Theorem \ref{thm:4q}, and Theorem \ref{thm:evencalculations}}\label{sec:calculations}

In this section, we prove \Cref{thm:4q}, \Cref{prop:longoextension}, and \Cref{thm:evencalculations}. In particular, we use Lemma \ref{lem:pinchmoves} to find upper bounds on $\gamma_4$ for these families and \Cref{thm:lowerbound} to find lower bounds.

\subsection{The family $T_{4,q}$}\label{subsec:4q} By Lemma \ref{lem:pinchmoves}, $\vartheta(T_{4,q})=2$ for all $q\ge 5$. Thus we have an obvious upper bound $\gamma_4(T_{4,q})\le2$ for all $q$. Now, from the formula
\begin{align*}
t^{3n}\Delta_{T_{4,2n+1}}(t) &= \frac{\left(t^{8n+4}-1\right)\left(t-1\right)}{\left(t^4-1\right)\left(t^{2n+1}-1\right)} \\
&= \frac{\left(t^{2n+1}+1\right)\left(t^{4n+2}+1\right)}{\left(t+1\right)\left(t^{2}+1\right)} = \left( \sum_{k=0}^{2n} (-t)^k \right)  \left( \sum_{k=0}^{2n} (-t^2)^k \right), 
\end{align*}
 we see that
\begin{align*}
\Delta_{T_{4,2n+1}}(t) &= \sum_{k=0}^{6n}a_k t^{k-3n}
\end{align*}
where
\begin{align*}
a_0 &= 
1, \qquad a_{6n} = 
1, \qquad a_{3n+1} = 0,\\
    a_{3n} &= 
    \begin{cases}
(-1)^{3n/2} \frac{1+(-1)^{n}}{2} 
&\mbox{if } n\text{ is even},\\
(-1)^{(3n+1)/2} \frac{1+(-1)^{(n-1)/2}}{2} 
&\mbox{if } n \text{ is odd}, \end{cases} \qquad \\
a_{3n+2} &=
    \begin{cases}
(-1)^{(3n+2)/2} \frac{1+(-1)^{n}}{2} 
&\mbox{if } n\text{ is even},\\
(-1)^{(3n+3)/2} \frac{1+(-1)^{(n-1)/2}}{2}
&\mbox{if } n \text{ is odd}.\end{cases}
\end{align*}
Thus 
\begin{align*}
\Delta_{T_{4,2n+1}}(t) &= \begin{cases}
t^{-3n}-\cdots - 1 + t^2 + \cdots + t^{3n} &\mbox{if } n \equiv 2 \text{ or } 3 \pmod 4,\\
t^{-3n}-\cdots +1 - t^2 + \cdots + t^{3n} &\mbox{if } n \equiv 0 \text{ or } 1 \pmod 4. \end{cases}
\end{align*}

Hence, applying \Cref{thm:lowerbound} gives $\gamma_4(T_{4,q})=2$ for all $q\equiv 5,7\pmod8$.

On the other hand, Theorem \ref{thm:lowerbound} cannot be applied to $T_{4,q}$ when $q\equiv 1\text{ or }3\pmod 8$. However, the obstruction from the linking form on the first homology of the double cover of  $T_{4,q}$ from \Cref{lem:linkingpairing} does obstruct $T_{4,q}$ from bounding a M{\" o}bius band for infinitely many $q\equiv 1\text{ or }3\pmod8$. 
In fact, \Cref{asymptotic} (proved in \Cref{sec:involutive}) implies that 
\[
\lim_{n\rightarrow\infty} \frac{\sum_{k=1}^{n} \gammatop (T_{4,8k+1})}{n} = \lim_{n\rightarrow\infty} \frac{\sum_{k=1}^{n} \gammatop (T_{4,8k+3})}{n} =2.
\]
This means that Batson's conjecture is also true for torus knots of the form $T_{4,8k+1}$ and $T_{4,8k+3}$ for almost all $k \geq 0$. As mentioned in Remark \ref{rem:primefactors}, we can explicitly find values of $q\equiv1\text{ or }3\pmod8$ such that $T_{4,q}$ does not bound a locally flat M{\"o}bius band. See Example \ref{exmp:4q} for details.

It is worth noting here that \Cref{thm:lowerbound} gives a sharper lower bound than the other known lower bounds detailed in Section \ref{sec:background} when $q\equiv 5\text{ or }7\pmod8$. 
Using the recursive formulas for $\sigma(T_{p,q})$ \cite{gordonlitherlandmurasugi}, $\upsilon(T_{p,q})$ \cite{fellerkrcatovich}, and $d(S^3_{-1}(T_{-p,q}))$ \cite{batson2012nonorientable}, and the formula in Remark \ref{rem:arf}, we see that 

\begin{align*}
\sigma(T_{4,q})&=
\begin{cases}
-2(q-1) & \text{ if } q\equiv 1\pmod 4,\\
-2q & \text{ if } q\equiv 3\pmod 4,
\end{cases}\\
\upsilon(T(4,q))&=-(q-1)\text{ for all odd }q,\\
d(S^3_{-1}(T_{-4,q}))&=
\begin{cases}
q-1 & \text{ if } q\equiv 1\pmod 8,\\
q-1 & \text{ if } q\equiv 3\pmod 8,\\
q+1 & \text{ if } q\equiv 5\pmod 8,\\
q+1 & \text{ if } q\equiv 7\pmod 8,
\end{cases}\\
\arf(T_{4,q})&=
\begin{cases}
0 & \text{ if } q\equiv 1,7\pmod 8,\\
1 & \text{ if } q\equiv 3,5\pmod 8.
\end{cases}
\end{align*}

It follows that

\begin{align*}
(\sigma+4\arf)(T_{4,q})\pmod 8&=
\begin{cases}
0 & \text{ if } q\equiv 1\pmod 8,\\
2 & \text{ if } q\equiv 3\pmod 8,\\
4 & \text{ if } q\equiv 5\pmod 8,\\
6 & \text{ if } q\equiv 7\pmod 8,\\
\end{cases} \\
    \left \vert \left(\upsilon - \frac{\sigma}{2}\right)\left(T(4,q)\right)\right \vert &= \begin{cases}
0 &\mbox{if } q\equiv 1\pmod4,\\
1 &\mbox{if } q\equiv 3\pmod4, \end{cases}\\
\frac{\sigma(T_{-4,q})}{2} - d(S^3_{-1}(T_{-4,q}))&=
\begin{cases}
0 & \text{ if } q\equiv 1\pmod 8,\\
1 & \text{ if } q\equiv 3\pmod 8,\\
-2 & \text{ if } q\equiv 5\pmod 8,\\
-1 & \text{ if } q\equiv 7\pmod 8.\\
\end{cases}
\end{align*}

We can see that \Cref{lem:yasuhara} only applies when $q\equiv 5\pmod 8$ and 
the lower bound $|\frac{\sigma}{2}-\upsilon|$ from \Cref{lem:OSSbound} does not give as sharp a lower bound as \Cref{thm:lowerbound} when $q\equiv 5,7\pmod 8$. Consequently, the lower bounds due to Golla and Marengon, Ballinger, and Daemi and Scaduto mentioned in Section \ref{sec:background} are not as sharp either. 
Notice that Batson's lower bound (from \Cref{lem:batson}) shows that $\frac{\sigma(T_{-p,q})}{2}-d(S^3_{-1}(T_{-p,q}))\le \gamma_4(T_{-p,q})=\gamma_4(T_{p,q})$ (since $T_{-p,q}$ bounds a nonorientable surface of first Betti number $g$ if and only if $T_{p,q}$ bounds a nonorientable surface of first Betti number $g$). Therefore, Batson's lower bound is also not as sharp as \Cref{thm:lowerbound} when $q\equiv 5,7\pmod8$. 
Meanwhile, Allen's bound from \Cref{lem:allen} does not apply since the double cover of $S^3$ branched over $T_{4,q}$ is not an integral homology sphere. Finally, the obstruction from \Cref{lem:linkingpairing} applies to the cases $q\equiv 5\text{ or }7\pmod8$ in the same way as it applies to the cases $q\equiv 1\text{ or }3\pmod8$; however, it does not obstruct every such torus knot from bounding a M{\"o}bius band (see Example \ref{exmp:4q} for details). 

We finally mention that the techniques of \cite{jabukakelly} can be used to show that $T_{4,q}$ does not bound a M{\"o}bius band for all $q\equiv 5\text{ or }7\pmod8$, although it requires considerable more work than simply applying Theorem \ref{thm:lowerbound}. To use this obstruction, one must first locate certain definite 4-manifolds bounded by the double branched cover along $T_{4,q}$ and then obstruct the existence of certain lattice embeddings given by Donaldson's Diagonalization Theorem \cite{donaldson}. 

\subsection{A Detour: Semigroups and the Alexander polynomial} To apply Theorem \ref{thm:lowerbound}, one must compute the Alexander polynomial. Such computations are often tedious, especially when considering infinite families of knots. 
To help with these calculations we exploit a relationship between the Alexander polynomial of torus knots (or in general, algebraic knots) and certain sub-semigroups of $\mathbb{Z}_{\ge 0}$.

Recall that a knot $K$ is an \emph{$L$-space knot} if it admits a positive Dehn surgery which is an $L$-space; see \cite[Definition 1.1]{ozsvath2005knot} for the definition of an $L$-space. 
It is proven in \cite[Theorem 1.2]{ozsvath2005knot} that when $K$ is an $L$-space knot, the Alexander polynomial of $K$ takes the form 
\begin{equation}
\Delta_K(t)=(-1)^m + \sum_{i=1}^{m} (-1)^{m-i}\left(t^{n_i}+t^{-n_i}\right)
\label{eqn:alex}
\end{equation}
for some sequence of positive integers $0<n_1<\cdots <n_m =g(K)$. 

The \emph{formal semigroup} of an $L$-space knot $K$, defined in \cite[Section 2.1]{wang2018semigroups} (see also \cite{bodnarceloriagolla}), is the subset $S_K \subset \mathbb{Z}_{\ge 0}$ satisfying the following equation in $\mathbb{Z}[[t,t^{-1}]]$:
\begin{equation}
t^{\deg \Delta_{K}} \cdot \Delta_{K} (t)= (1-t)\cdot \left(\sum_{i\in S_K} t^i \right).
\label{eqn:alexsemigroup}
\end{equation}

Recall that a knot $K$ is an \emph{algebraic knot} if it arises as a knot associated to a plane curve singularity in $\mathbb{C}^2$. It is known that algebraic knots are $L$-space knots \cite{hedden2009} and that the formal semigroup $S_K$ of an algebraic knot $K$ is an actual semigroup and it agrees with the traditionally defined semigroup for algebraic knots \cite{wall2004singular}. 
Torus knots are algebraic knots because they are precisely the knots associated to the singularity $z^p -w^q =0$ for $z,w \in \mathbb{C}$. 
Therefore, each torus knot gives rise to a semigroup. Moreover, it is well-known that the semigroup for $T_{p,q}$ is $S_{T_{p,q}}=p\mathbb{Z}_{\ge0}+q\mathbb{Z}_{\ge0}$ (see, e.g., \cite{borodziklivingston2014})
and that the Alexander polynomial of $T_{p,q}$ is the Laurent polynomial given by $$\displaystyle\Delta_{T_{p,q}}(t)=t^{-\frac{(p-1)(q-1)}{2}}\cdot \frac{(t-1)(t^{pq}-1)}{(t^p -1)(t^q -1)},$$ which has degree $\frac{(p-1)(q-1)}{2}$.

\begin{exmp}
Consider the torus knot $K=T_{3,5}$. Its Alexander polynomial is given by $$\Delta_K(x) \doteq x^{-4}\cdot\frac{(x-1)(x^{15}-1)}{(x^3-1)(x^5-1)}=x^{-4}-x^{-3}+x^{-1}-1+x-x^3+x^4.$$ On the other hand, the semigroup generated by $3$ and $5$ is given by $$\langle 3,5\rangle = \{ 0,3,5,6 \}\cup \mathbb{Z}_{\ge 8}.$$ This matches the equality given by \Cref{eqn:alexsemigroup}:
\[
\begin{split}
    x^4(x^{-4}-x^{-3}+x^{-1}-1+x-x^3+x^4) &= 1-x+x^3-x^4+x^5-x^7+x^8 \\
    &= (1-x)\left(1+x^3+x^5+x^6+\sum_{i=8}^{\infty}x^i\right).
\end{split}
\]
\end{exmp}

Since algebraic knots are $L$-space knots, their Alexander polynomials are as in \Cref{eqn:alex}. Coupling this with \Cref{eqn:alexsemigroup}, we have the following lemma, which was also stated (and proven) in a slightly different way in \cite{borodzik2019involutive}.

\begin{lem}[{\cite[Proposition 3.8]{borodzik2019involutive}}]
\label{semigrouplemma}
Let $K$ be an algebraic knot whose Alexander polynomial has leading terms $t^d$ and $t^{-d}$. Then, for any integer $i$, the coefficient of the term $t^i$ of $\Delta_K(t)$ is $1$ if $d+i-1\notin S_K$ and $d+i\in S_K$, $-1$ if $d+i-1\in S_K$ and $d+i\notin S_K$, and $0$ if $d+i-1$ and $d+i$ are either both contained in $S_K$ or both not contained in $S_K$.
\end{lem}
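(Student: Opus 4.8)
\textbf{Proof proposal for \Cref{semigrouplemma}.}

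The plan is to start from the defining identity \Cref{eqn:alexsemigroup}, rewritten with the shift that makes $\Delta_K(t)$ a symmetric Laurent polynomial with leading terms $t^{\pm d}$, and extract the coefficient of $t^i$ by comparing the two sides term by term. Writing $\Delta_K(t)=\sum_i c_i t^i$, multiplying by $t^d$ gives $t^d\Delta_K(t)=\sum_i c_i t^{d+i}$, which by \Cref{eqn:alexsemigroup} equals $(1-t)\sum_{j\in S_K}t^j = \sum_{j\in S_K}t^j - \sum_{j\in S_K}t^{j+1}$. Hence the coefficient of $t^{d+i}$ on the right-hand side is $\mathbf{1}_{S_K}(d+i) - \mathbf{1}_{S_K}(d+i-1)$, where $\mathbf{1}_{S_K}$ is the indicator function of $S_K$. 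Comparing, $c_i = \mathbf{1}_{S_K}(d+i) - \mathbf{1}_{S_K}(d+i-1)$, and a two-by-two case analysis on whether $d+i$ and $d+i-1$ lie in $S_K$ produces exactly the four cases in the statement: $c_i=1$ when $d+i\in S_K$, $d+i-1\notin S_K$; $c_i=-1$ when $d+i-1\in S_K$, $d+i\notin S_K$; and $c_i=0$ when both membership statuses agree.

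The only genuine content beyond this bookkeeping is justifying that \Cref{eqn:alexsemigroup} holds with the correct normalization, \ie that the $t^d$ on the left-hand side really is the shift that produces the symmetric Laurent polynomial form of \Cref{eqn:alex}, and that the power series $\sum_{j\in S_K}t^j$ converges/manipulates formally as claimed. Both are part of the setup already recalled in the excerpt: $S_K$ is \emph{defined} by \Cref{eqn:alexsemigroup}, and for an algebraic knot $K$ the leading terms of $\Delta_K$ are $t^{\pm g(K)}$ with $\deg\Delta_K = g(K)$, so taking $d = g(K) = \deg\Delta_K$ matches the hypothesis that the leading terms are $t^{\pm d}$. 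The identity is an equality in $\mathbb{Z}[[t,t^{-1}]]$, and since $\Delta_K$ has only finitely many nonzero coefficients, the equation $(1-t)\sum_{j\in S_K}t^j = t^d\Delta_K(t)$ pins down $\sum_{j\in S_K}t^j$ as a well-defined element of $\mathbb{Z}[[t]]$ (the partial sums $1+t+t^2+\cdots$ appear because $S_K$ is cofinite), so the term-by-term coefficient extraction above is legitimate.

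I do not expect a serious obstacle here: the statement is essentially a reformulation of \Cref{eqn:alexsemigroup}, and the proof is a one-line algebraic identity followed by a case split. The only point requiring a word of care is making sure the indices line up — that the coefficient of $t^i$ in $\Delta_K$ corresponds to position $d+i$ in the semigroup indicator, not $d+i$ shifted by one in the other direction — which is fixed by the convention that $d$ is the top degree so that $c_d = 1$ forces $d+d = 2d \in S_K$ and $2d-1\notin S_K$, consistent with $S_K$ being cofinite with largest gap at $2g(K)-1 = 2d-1$ (the Frobenius-type number for the semigroup of an algebraic knot). I would include the computation $t^d\Delta_K(t) = (1-t)\sum_{j\in S_K}t^j$ explicitly, then state the four cases, and remark that this recovers \cite[Proposition 3.8]{borodzik2019involutive}.
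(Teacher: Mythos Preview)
Your proposal is correct and follows exactly the approach the paper indicates: the paper does not write out a proof but simply says the lemma follows by ``coupling'' \Cref{eqn:alex} with \Cref{eqn:alexsemigroup}, and your coefficient-extraction argument is precisely that coupling made explicit. The index check you flag (that $c_i$ corresponds to position $d+i$) is the only thing to be careful about, and you have handled it correctly.
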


\subsection{The family $T_{p,(2k+1)p-1}$, where $p$ is odd}\label{subsec:podd} We now prove the first statement of \Cref{thm:evencalculations}\ref{item:p-1} (the second statement of \Cref{thm:evencalculations}\ref{item:p-1} is due to Corollary \ref{cor:jabukavanCottpinch}). Suppose $p$ is odd and let $q\equiv p-1\pmod{2p}$. Set $q=2kp+(p-1)=(2k+1)p-1$. Notice that by Lemma \ref{lem:pinchmoves}, $\vartheta(T_{p,(2k+1)p-1})=\frac{p-1}{2}$ and so $\gamma_4(T_{p,(2k+1)p-1})\le\frac{p-1}{2}$.

Consider the semigroup $S_K$ for the torus knot $K=T_{p,(2k+1)p-1}$, which is generated by $p$ and $(2k+1)p-1$. The degree of the Alexander polynomial $\Delta_K$ of $K$ is $d=\frac{(p-1)((2k+1)p-2)}{2}.$ We first claim that $d+i\notin S_K$ for all $0\le i\le \frac{p-3}{2}$. Suppose otherwise. Then there exist non-negative integers $a$ and $b$ satisfying 
$$ap+b((2k+1)p-1)=\frac{(p-1)((2k+1)p-2)}{2}+i.$$
\noindent Solving for $a$, we find
$$a=\frac{(p-1)(2k+1)p-2p-2b(2k+1)p}{2p}+\frac{1+i+b}{p}.$$
Since $p$ is odd and $a\in\mathbb{Z},$ we necessarily have that $1+i+b=\alpha p$ for some integer $\alpha$. Since $b\ge0$, we have that $b=\alpha p-i-1\ge0$, implying that $\alpha\ge1$. 
Therefore


\begin{align*}
    a &= \frac{(p-1)(2k+1)p-2p-2(\alpha p -i -1)(2k+1)p+2\alpha p}{2p}
    \\&= \frac{-2\alpha p((2k+1)p-1)
    +(2k+1)p(p-1+2(i+1))
    -2p}{2p}
    \leq \frac{(2k+1)p(-2p+2i+p+1)
    }{2p}
    \\&\leq \frac{-2p(2k+1)}{2p}<0
\end{align*}
using $i\le \frac{p-3}{2}$ in the second-to-last inequality, which contradicts 
the assumption that $a\ge0$. Thus $d+i\notin S_K$ for all $0\le i\le \frac{p-3}{2}$. It follows from \Cref{semigrouplemma} that since the constant term of the Alexander polynomial is nonzero and $d\notin S_K$, the constant term of the Alexander polynomial is $-1$. Moreover, since $d+i\notin S_K$ for all $0\le i\le \frac{p-3}{2}$, we can further deduce by \Cref{semigrouplemma} that the first nontrivial term of positive degree in $\Delta_K$ has exponent at least $\frac{p-1}{2}$. Finally, setting $a=0$ and $b=\frac{p-1}{2}$, we have that $ap+b((2k+1)p-1)=\frac{((2k+1)p-1)(p-1)}{2}=d+\frac{p-1}{2}.$ Therefore, $d+\frac{p-1}{2}\in S_K$. By \Cref{semigrouplemma}, the first nontrivial term of positive degree in $\Delta_K$ has exponent $\frac{p-1}{2}.$ By Theorem \ref{thm:lowerbound}, $\gamma_4(T_{p,(2k+1)p-1})\ge\frac{p-3}{2}.$ Thus we have shown $\gamma_4(T_{p,(2k+1)p-1})\in\left\{\frac{p-3}{2},\frac{p-1}{2}\right\}$.

\subsection{The family $T_{p,(2k+1)p+1}$, where $p$ is even}\label{subsec:peven} Finally, we prove the middle case of \Cref{thm:evencalculations}\ref{item:c}(the other cases of \Cref{thm:evencalculations}\ref{item:c} are due to Corollary \ref{cor:jabukavanCottpinch}). Let $q\equiv p+1\pmod{2p}$. Set $q=2kp+p+1=(2k+1)p+1$. By Lemma \ref{lem:pinchmoves}, $\vartheta(T_{p,(2k+1)p+1})=\frac{p}{2}$ and thus $\gamma_4(T_{p,(2k+1)p+1})\le \frac{p}{2}$. 

As above, let $S_K$ denote the semigroup for $T_{p,(2k+1)p+1}$, which is generated by $p$ and $(2k+1)p+1$. We claim that $d+i\notin S_K$ for all $0\le i \le \frac{p-2}{2}$, where $d=\frac{(p-1)(2k+1)p}{2}$ is the degree of the Alexander polynomial. Indeed, if there exist nonnegative integers $a$ and $b$ satisfying 
$$ap+b((2k+1)p+1)=d+i$$
then 
$$a=\frac{(p-1)(2k+1)}{2}-b(2k+1)+\frac{i-b}{p}.$$
Since $p$ is even and $a\in\mathbb{Z}$, $\frac{i-b}{p/2}$ must be odd. Thus $i-b=\alpha \frac{p}{2},$ where $\alpha$ is an odd integer. Consequently, $0\le b=i-\alpha \frac{p}{2}\le -\frac{\alpha -1}{2}p-1$ (using $i \le \frac{p-2}{2}$) and so $\alpha\le -1$ (using that $\alpha$ is odd). Thus we have 
$$a=\frac{(p-1)(2k+1)}{2}-b(2k+1)+\frac{i-b}{p}
= \frac{2k+1}{2}\underbrace{\left(p-1-2\left(i-\alpha \frac{p}{2}\right)\right)}_{=p(\alpha+1)-(2i+1)}+\frac{\alpha}{2}
\le -\frac{(2i+1)(2k+1)}{2}<0,$$
contradicting the assumption that $a\ge 0.$ Moreover, setting $a=\frac{(p-1)(2k+1)+1}{2}$ and $b=0$, we see that $ap+b((2k+1)p+1)=d+\frac{p}{2}\in S_K.$ Now arguing as in \Cref{subsec:podd}, we have that 
the Alexander polynomial of $K$ has constant term $-1$ and the first nontrivial term of positive degree has exponent $\frac{p}{2}$. Thus by Theorem \ref{thm:lowerbound}, $\gamma_4(T_{p,(2k+1)p-1})\ge \frac{p-2}{2}$.

\subsection{The family $T_{4n+2k,(4n+2k)(n\pm1)+1}$}
Finally, we finish the proof of \Cref{prop:longoextension}. Notice that since $n$ is even, the torus knots in this family are contained in the set of torus knots considered in \Cref{subsec:peven} (c.f.~\Cref{rem:specialcase}). Thus we have $\gamma_4(T_{4n+2k,(4n+2k)(n\pm1)+1})\in\{2n+k-1,2n+k\}$. Now, by \Cref{prop:families}, we have $\gamma_4(T_{4n+2k,(4n+2k)(n\pm1)+1})=2n+k-1$.

\begin{rem} 
By \Cref{lem:pinchmoves}, all of the pinch moves used for the $T_{p,(2k+1)p+1}$ family in \Cref{subsec:peven} are negative; since $p$ is even, Theorem 1.8 in \cite{jabuka2019nonorientable} tells us that the lower bound given by Lemma \ref{lem:OSSbound} is 0. 
Similarly, by \Cref{lem:pinchmoves}, all of the pinch moves used for the $T_{p,(2k+1)p-1}$ family in \Cref{subsec:podd} are positive. However, since $(2k+1)p-1$ is even, to apply Theorem 1.8 in \cite{jabuka2019nonorientable}, we reverse the role of $p$ and $(2k+1)p-1$ so that all pinch moves are negative. By doing so we find that the lower bound given by Lemma \ref{lem:OSSbound} is 0.
Finally, one can see that the lower bound in \Cref{lem:batson} is, in general, not as sharp as \Cref{thm:lowerbound} for these families (e.g. for $T_{8,9}$, the lower bound given by \Cref{lem:batson} is $-4$). Thus Theorem \ref{thm:lowerbound} gives a much sharper lower bound than the existing lower bounds discussed in \Cref{sec:background}.
\end{rem}

\section{Involutive obstruction and genus bounds from the unoriented link Floer TQFT}\label{sec:involutive}
In this section, we will prove Theorems \ref{mobiusthm}, \ref{highergenusthm}, and \ref{thm:lowerbound}.
We begin by briefly recalling Fan's disoriented link Floer TQFT, as defined in \cite{fan2019unoriented}. Although Fan's TQFT accounts for unorientable cobordisms between links in $S^3$, there are a number of restrictions of which one should be aware.
\begin{defn}
A \emph{disoriented link} (in $S^3$) is a quadruple $\mathcal{L}=(L,\p,\q,\bfl)$ where $L$ is a link in $S^3$, $\p$ and $\q$ are finite subsets of $L$ satisfying $\p\cap \q=\emptyset$, such that points in $\p$ and points in $\q$ appear alternately on each component of $L$, and $L\backslash (\p\cup\q)$ consists of $2n$ arcs $\bfl=\{l_1,\cdots,l_{2n}\}$ for some integer $n$.
\end{defn}
\begin{rem}
For simplicity, we will occasionally refer to a disoriented link $\mathcal{L}=(L,\p,\q,\bfl)$ and its underlying link $L$ interchangeably.
\end{rem}

Given a disoriented link $\mathcal{L}=(L,\p,\q,\bfl)$, where $\p=\{p_1,\dots,p_n\}$ and $\q=\{q_1,\cdots,q_n\}$, choose any orientation on $L$. Viewing the points $\p$ and $\q$ on $L$ as $z$-basepoints and $w$-basepoints respectively gives the link Floer chain complex $CFL(S^3,\mathcal{L})$ a structure of a curved complex of modules over the ring $\mathbb{F}_2[U_1,\cdots,U_n,V_1,\cdots,V_n]$, as shown in \cite{zemke2019link}. The curvature of this complex is given by the formula
\[
\partial_{CFL(S^3,\mathcal{L})}^2 = \sum_{K\in \pi_0(L)} U_{p_{K,1}}V_{q_{K,1}}+V_{q_{K,1}}U_{p_{K,2}}+\cdots+V_{q_{K,t}}U_{p_{K,1}},
\]
where the points of $\p$ and $\q$ on a component $K$ are labeled as $p_{K,1},q_{K,1},\cdots,p_{K,t},q_{K,t}$ as one travels along $L$ with the chosen orientation. Consider the truncation of $CFL(S^3,\mathcal{L})$ by identifying all of the variables $U_i$ and $V_i$ with a single formal variable $U$, and denote the resulting complex by $CFL^{\prime}(S^3,\mathcal{L})$. Then the curvature vanishes and thus $\uCFL (S^3,\mathcal{L})$ is a chain complex over the ring $\mathbb{F}_2[U]$.

Note that $CFL(S^3,\mathcal{L})$ is endowed with two gradings $\mathbf{gr}_\p$ and $\mathbf{gr}_\q$. The truncation is not compatible with this bigrading, and we only get a single grading, called the (normalized) $\delta$-grading, which is given by $\delta =\frac{1}{2}(\sigma(L)+gr_\p +gr_\q )$. Note that unlike in the setting of $CFL(S^3,\mathcal{L})$, $U$ lowers the normalized $\delta$ grading by $1$. The resulting $\delta$-graded chain complex $\uCFL (S^3,\mathcal{L})$ is called the \emph{unoriented link Floer chain complex} of $\mathcal{L}$ and its homology is called the \emph{unoriented link Floer homology}. When $\mathcal{L}$ is a knot, we write $\uCFK$ instead of $\uCFL$ and similarly $CFK$ for $CFL$. 

\begin{defn}
A \emph{disoriented link cobordism} (in $S^3 \times I$) between disoriented links $\mathcal{L}_1 =(L_1,\p_1,\q_1,\bfl_1)$ and $\mathcal{L}_2 =(L_2,\p_2,\q_2,\bfl_2)$ is a pair $\mathcal{F}=(F,a)$ where $F$ is a smoothly embedded surface in $S^3 \times I$ such that $\partial F= (L_1 \times \{0\}) \cup (L_2 \times \{1\})$ and $a$ is an oriented $1$-manifold which is properly embedded in $(F,\partial F)$ such that $F\backslash a$ is orientable and $\partial a=\q_1 -\p_1 +\p_2 -\q_2$.
\end{defn}

The main theorem of \cite{fan2019unoriented} is as follows. Note that, while the theorem is originally stated on the level of homology, one can observe directly from its proof that it actually holds on chain level up to (a possibly non-unique) chain homotopy. Slightly more specifically, the map $F_S$ is constructed by decomposing the disoriented link cobordism into a composition of four types of elementary disoriented link cobordisms~\cite[Chapter 2]{fan2019unoriented} to each component of which one assigns a $\delta$-grading preserving $\mathbb{F}_2[U]$-module map. Different decompositions into elementary cobordisms are related by a collection of moves (see \cite[Section 7.1]{fan2019unoriented}), each of which preserves the filtered chain homotopy class---see Step 6 in the proof of \cite[Theorem 1.3.1]{fan2019unoriented}.

\begin{thm}[{\cite[Theorem 1.3.1]{fan2019unoriented}}]
\label{fansthm}
Let $S$ be a disoriented link cobordism between (disoriented) links $L_1$ and $L_2$. Then there is a $\delta$-grading-preserving $\mathbb{F}_2[U]$-module map 
\[
F_S \colon \uCFL (S^3,L_1) \rightarrow \uCFL (S^3,L_2),
\]
called the cobordism map induced by $S$, whose chain homotopy class depends only on the isotopy class of $S$, as a disoriented link cobordism. This map is functorial in the following sense: if $S_1$ and $S_2$ are disoriented link cobordisms between $L_1$ and $L_2$ and between $L_2$ and $L_3$, respectively, then we have functoriality up to chain homotopy: 
\[
F_{S_2 \circ S_1} \sim F_{S_2} \circ F_{S_1}.
\]
\end{thm}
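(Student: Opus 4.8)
\textbf{Proof proposal for Theorem \ref{fansthm} (the final statement).}
The plan is to carefully inspect the construction in \cite{fan2019unoriented} and verify that the cobordism map is built from elementary pieces whose definitions already live at the chain level, so that the claimed chain-homotopy statement is not a genuinely new theorem but a sharpening of what is already proved. First I would recall that any disoriented link cobordism $S$ decomposes, after an isotopy, into a sequence of elementary cobordisms: births, deaths, saddles, and cylinders carrying the disorienting arc data $a$. For each elementary piece, Fan constructs an explicit $\mathbb{F}_2[U]$-module chain map on $\uCFL$ (obtained by truncating Zemke's link cobordism maps from \cite{zemke2019link} and checking that truncation kills the curvature terms, which is exactly the computation recalled above). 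These elementary maps are defined on the chain complexes, not merely on homology, so composing them along a fixed decomposition of $S$ yields a genuine chain map $F_S\colon \uCFL(S^3,L_1)\to\uCFL(S^3,L_2)$.

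The substance of the upgrade is invariance: one must show that two different decompositions of isotopic cobordisms induce chain-homotopic maps, rather than merely maps agreeing on homology. I would handle this exactly as in the oriented link Floer TQFT of Zemke: isotopic cobordisms are related by a finite sequence of ``elementary relations'' among decompositions (handle slides, births/deaths cancelling against saddles, commuting distant elementary cobordisms, the Reidemeister-type moves for the arc $a$, etc.). For each such relation, Fan's proof already produces an equality of the induced maps on homology by exhibiting an explicit algebraic identity; the point is that in every case this identity is witnessed by an explicit chain homotopy (a concrete $\mathbb{F}_2[U]$-module map raising $\delta$-grading by one), because the underlying model computations — counting holomorphic disks/triangles in the relevant Heegaard multi-diagrams — take place on the complexes themselves. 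So the verification is: go through Fan's list of moves, and in each case read off the chain homotopy that his homology-level argument is implicitly using. Functoriality $F_{S_2\circ S_1}\sim F_{S_2}\circ F_{S_1}$ is then immediate, since concatenating a decomposition of $S_1$ with one of $S_2$ is a decomposition of $S_2\circ S_1$, and $\delta$-grading preservation is inherited from each elementary piece.

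The main obstacle I expect is purely expository rather than mathematical: Fan's paper states \Cref{fansthm} on the level of homology, so one must re-examine each step of his invariance proof and confirm that no step secretly passes to homology in an essential way (for instance, by invoking a homology-level isomorphism whose chain-level refinement is not manifestly a chain homotopy). In practice the only place this could fail is if some intermediate identification used a quasi-isomorphism that is not itself part of a deformation retract; but since all the maps in question come from triangle maps and handle-attachment maps in Heegaard Floer theory, which are known to assemble into chain-level natural transformations, this does not occur. Hence the conclusion: the cobordism map is well-defined up to (possibly non-unique) chain homotopy, $\delta$-grading preserving, and functorial up to chain homotopy, which is precisely the strengthened form of \cite[Theorem 1.3.1]{fan2019unoriented} that we need for the applications in \Cref{sec:involutive}.
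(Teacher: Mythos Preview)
Your proposal is correct and matches the paper's own treatment: the paper does not give an independent proof of \Cref{fansthm} but simply cites \cite[Theorem 1.3.1]{fan2019unoriented} and remarks that ``while the theorem is originally stated on the level of homology, one can observe directly from its proof that it actually holds on chain level up to (a possibly non-unique) chain homotopy.'' Your write-up is an accurate expansion of what that observation entails---decomposing into elementary pieces, noting each is a chain map, and checking that the invariance moves are witnessed by explicit chain homotopies coming from triangle maps---so there is nothing to correct.
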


\begin{rem}
\label{oriented-equivalent}
For oriented link cobordisms, the notion of disoriented cobordism is equivalent to the notion of decorated cobordism in \cite{juhasz2016cobordisms} and \cite{zemke2019link}. This equivalence is a little subtle. The decoration in a decorated cobordism with underlying surface $\Sigma$ is an embedded $1$-manifold in $\Sigma$ whose boundary is required to be disjoint from the basepoints on the link. On the other hand, we have that the decoration in a disoriented cobordism is required to coincide with the basepoints on the link. Nevertheless, there is a notion of compatibility between disoriented and decorated orientable link cobordisms~\cite[Remark 4.14]{gong2021non} and it is known that the cobordism maps from Zemke's oriented link Floer TQFT induce the cobordism maps from Fan's unoriented link Floer TQFT under truncation for appropriate compatible cobordisms~\cite[Remark 4.15]{gong2021non}.
\end{rem}

For later use, we will give some trivial examples of disoriented cobordisms for which Fan's cobordism maps can be easily computed.
\begin{exmp}
\label{unknottedmobius}
Consider the unknot $U$, made into a disoriented knot $\mathcal{U}$ by choosing one $\p$ point and one $\q$ point. Performing a nonorientable band move on $U$ with normal Euler number $\pm 1$ gives two nonorientable cobordisms $\mathcal{S}_{\pm}$ from $\mathcal{U}$ to $\mathcal{U}$. From the $\delta$-grading shift formula (see \cite[Proposition 7.2.5]{fan2019unoriented}), we see that the grading shifts of $F_{\mathcal{S}_+}$ and $F_{\mathcal{S}_-}$ are $0$ and $-1$, respectively. Now, from the fact that the cobordism maps $F_{\mathcal{S}_{\pm}}$ induced by $\mathcal{S}_{\pm}$ become quasi-isomorphisms after localizing by $U^{-1}$ (which follows directly from \cite[Proposition 5.4]{gong2021non}), we get
\[
F_{\mathcal{S}_+} = \mathbf{id} \text{ and } F_{\mathcal{S}_-} = U\cdot\mathbf{id}.
\]
\end{exmp}

In this section, we will use not only (disoriented) cobordisms in $S^3 \times I$, but also cobordisms in $B^4$. They can be seen as disoriented cobodisms from a nontrivial link to the empty link, or from the empty link to a nontrivial link. To define cobordism maps for such cobordisms, given a disoriented cobordism $(S,a)$ in $B^4$, we choose any $p$ in the interior of $a$ and its sufficiently small neighborhood $N(p)$ in $B^4$, so that $S\cap N(p)$ is a boundary-parallel disk in $B^4$ and $a\cap N(p)$ is a properly embedded arc on $S\cap N(p)$. We then consider $(S\backslash N(p),a\backslash N(p))$, which is now a disoriented cobordism in $S^3\times I$, from some disoriented link $L$ to the unknot. Then the map $F_{(S,a)}$ induced by $(S,a)$ can be naturally defined as the disoriented cobordism map $F_{(S\backslash N(p),a\backslash N(p))}$ induced by the punctured cobordism $(S\backslash N(p),a\backslash N(p))$ by \Cref{fansthm}. We can similarly define the opposite-direction cobordism map $F_{(-S,a)}$. Note that this definition is compatible with the birth/death maps of unknots in Zemke's oriented link Floer TQFT.

\begin{prop}
For any disoriented cobordism $(S,a)$ in $B^4$, the chain homotopy class of the map $F_{(S,a)}$ depends only the isotopy class of $(S,a)$. The same statement also holds for $(-S,a)$.
\end{prop}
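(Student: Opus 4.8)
The plan is to reduce the statement to the isotopy invariance of Fan's disoriented cobordism maps in $S^3 \times I$ recorded in \Cref{fansthm}. Recall that $F_{(S,a)}$ was defined by removing a small ball $N(p)$ around an interior point $p$ of $a$, chosen so that $(N(p), S \cap N(p), a \cap N(p))$ is a standard ball--disk--arc triple, identifying $B^4 \setminus \mathrm{int}\,N(p)$ with $S^3 \times I$, and applying \Cref{fansthm} to the resulting disoriented cobordism from the auxiliary unknot to $L$. So we need only check two things: (i) the chain homotopy class of $F_{(S,a)}$ is independent of the choices of $p$ and $N(p)$; and (ii) it is unchanged when $(S,a)$ is replaced by an ambient-isotopic pair in $B^4$ rel $\partial B^4$.

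For (i), any two admissible neighborhoods $N(p), N'(p)$ of the same point differ by an ambient isotopy of $B^4$ supported in a slightly larger ball and fixing $(S,a)$ and $\partial B^4$ setwise; restricting to complements identifies the two punctured cobordisms, so \Cref{fansthm} gives equality of the associated chain homotopy classes. To move the point, choose an embedded arc $\gamma \subset \mathrm{int}(a)$ from $p_0$ to $p_1$ together with a tubular neighborhood of $\gamma$ in $B^4$ adapted to $(S,a)$; this produces an ambient isotopy of $B^4$, supported away from $\partial B^4$ and fixing $(S,a)$ setwise, carrying $N(p_0)$ to $N(p_1)$. Passing to complements shows the two punctured cobordisms become isotopic in $S^3 \times I$ once we additionally isotope the auxiliary unknot boundary component into a fixed standard position, which changes nothing since a disoriented unknot with one $\p$ point and one $\q$ point is unique up to isotopy. \Cref{fansthm} then finishes (i).

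For (ii), suppose $\Psi_t$ is an ambient isotopy of $B^4$ with $\Psi_0 = \mathrm{id}$, $\Psi_1(S_0,a_0) = (S_1,a_1)$, and $\Psi_t|_{\partial B^4} = \mathrm{id}$ for all $t$; such a $\Psi_t$ exists by the isotopy extension theorem whenever $(S_0,a_0)$ and $(S_1,a_1)$ are isotopic rel $\partial B^4$. Fix a puncture point $p_0 \in \mathrm{int}(a_0)$ with admissible neighborhood $N_0$, and set $N_t = \Psi_t(N_0)$, an admissible neighborhood of $\Psi_t(p_0) \in \mathrm{int}(\Psi_t(a_0))$. Choosing identifications $B^4 \setminus \mathrm{int}\,N_t \cong S^3 \times I$ continuously in $t$, the family $(\Psi_t(S_0) \setminus \mathrm{int}\,N_t,\ \Psi_t(a_0) \setminus \mathrm{int}\,N_t)$ is an isotopy of disoriented cobordisms from the auxiliary unknot to $L$, so by \Cref{fansthm} its two endpoint maps are chain homotopic; by (i), the endpoint at $t=0$ computes $F_{(S_0,a_0)}$ and the endpoint at $t=1$ computes $F_{(S_1,a_1)}$. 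This proves the claim for $(S,a)$. The case of $(-S,a)$ is handled by the identical argument with the two boundary components of the punctured cobordism exchanged, equivalently by applying the above to the time-reversed cobordism, since \Cref{fansthm} applies to disoriented cobordisms read in either direction.

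The only genuine subtlety is that the identifications $B^4 \setminus \mathrm{int}\,N \cong S^3 \times I$ must be made coherently along an isotopy, and that sliding the puncture moves the auxiliary unknot within $S^3 \times \{0\}$, so one must confirm this still yields an isotopy of cobordisms in the sense demanded by \Cref{fansthm}; this is exactly where uniqueness up to isotopy of the disoriented unknot with one $\p$ and one $\q$ point is invoked to normalize the auxiliary boundary. The remaining ingredients are standard applications of the isotopy extension theorem.
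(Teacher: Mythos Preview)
Your argument is correct but takes a different route from the paper for the key step, independence of the puncture point. Rather than sliding the puncture by an ambient isotopy, the paper chooses a larger ball $B\subset B^4$ containing both puncture neighborhoods $N(p)$ and $N(q)$, arranged so that $S\cap B$ is orientable and $\partial B$ meets $S$ and $a$ transversely; the composition law in \Cref{fansthm} then factors each punctured-cobordism map as $F_{(S\setminus B,\,a\setminus B)}$ composed with the map for an orientable cobordism inside $B$, and the latter is independent of the puncture by \Cref{oriented-equivalent}, which identifies Fan's maps with truncations of Zemke's and thereby imports well-definedness in $B^4$ from the oriented theory. Your approach avoids the appeal to Zemke's TQFT and works entirely with isotopy extension plus \Cref{fansthm}, which is more self-contained; the trade-off is exactly the subtlety you flag, namely that comparing the two punctured cobordisms inside a fixed $S^3\times I$ requires choosing identifications $B^4\setminus N(p)\cong S^3\times I$ coherently, and two such identifications differ by a self-diffeomorphism of $S^3\times I$ fixing the $L$-end, so you are implicitly using invariance of Fan's maps under such diffeomorphisms (after normalizing the unknot end), a mild strengthening of the isotopy-invariance literally stated in \Cref{fansthm}. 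The paper's factoring trick sidesteps this by reducing everything to the composition law and the already-established orientable case.
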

\begin{proof}
Choose any two points $p,q$ in the interior of $a$ and choose sufficiently small neighborhoods $N(p)$ and $N(q)$ of $p$ and $q$, respectively, so that $N(p)\cap N(q)=\emptyset$ and $S\cap N(p)$. We claim that $F_{(S\backslash N(p),a\backslash N(p))}=F_{(S\backslash N(q),a\backslash N(q))}$; the desired result follows directly from this claim.
To prove the claim, we first observe that the case when $S$ is orientable is already covered by \Cref{oriented-equivalent}. In the general case when $S$ can be non-orientable, choose any smoothly embedded ball $B\subset B^4$ such that $N(p)\cup N(q)\subset B$, $S\cap B$ is orientable, and $\partial B$ intersects transversely with both $S$ and $a$. Since $(S\backslash B,a\backslash B)$ and $(S\cap (B\backslash N(p)),a\cap (B\backslash N(p)))$ are disoriented cobordisms in $S^3 \times I$, we have 
\[
F_{(S\backslash N(p),a\backslash N(p))} \simeq F_{(S\backslash B,a\backslash B)} \circ F_{(S\cap (B\backslash N(p)),a\cap (B\backslash N(p))} \simeq F_{(S\backslash B,a\backslash B)} \circ F_{(S\cap B,a\cap B)}.
\]
Similarly, we also have $F_{(S\backslash N(q),a\backslash N(q))} \simeq F_{(S\cap B,a\cap B)}$. The claim follows.
\end{proof}

We now introduce involutive knot Floer homology. Given an oriented knot $K$ in $S^3$, we can represent it as a doubly-pointed Heegaard diagram $H=\left(\Sigma,\mathbf{\alpha},\mathbf{\beta},z,w\right)$, and suppose that $K$ lies on the Heegaard surface $\Sigma$. We have a canonical chain skew-isomorphism
\[
\eta \colon CFK(H)\rightarrow CFK(\bar{H}),
\]
where $\bar{H}$ denotes the Heegaard diagram $(-\Sigma,\mathbf{\beta},\mathbf{\alpha},w,z)$.  Here, the prefix `skew' means that we are considering maps which intertwine the actions of $U$ and $V$ on the domain with the actions of $V$ and $U$ on the codomain.

We then consider the half-twist map
\[
\tau \colon CFK(\bar{H})\rightarrow CFK(\tau(\bar{H}))
\]
which corresponds to the self-diffeomorphism $(S^3,K,z,w)\rightarrow (S^3,K,w,z)$ induced by rotating a neighborhood of $K$ (in $\Sigma$) halfway around $K$ in the direction of the orientation induced by $K$. Here, $\tau(\bar{H})$ denotes the Heegaard diagram $(-\Sigma,\tau(\mathbf{\beta}),\tau(\mathbf{\alpha}),z,w)$, where $\tau(\mathbf{\beta})$ and $\tau(\mathbf{\alpha})$ are the images of $\mathbf{\beta}$ and $\mathbf{\alpha}$ under the given ``half-twist'' self-diffeomorphism. Now, the doubly-pointed Heegaard diagrams $\tau(\bar{H})$ and $H$ encode the same oriented knot, so the naturality of link Floer homology \cite{juhasz2018naturality} tells us that there is a chain homotopy equivalence
\[
n \colon CFK(\tau(\bar{H}))\rightarrow CFK(H),
\]
which is well-defined up to chain homotopy. Composing these three maps gives a homotopy self skew-equivalence
\[
\iota_K =n\circ \tau\circ \eta,
\]
whose chain homotopy class depends only on the oriented isotopy class of $K$. It is known \cite{zemke2019connected} that $\iota_K$ is not an involution in general, but rather an order $4$ map which satisfies
\[
\iota_K ^2 \sim 1+\Phi\Psi,
\]
where $\Phi$ and $\Psi$ are the basepoint actions, which can be easily computed as the formal partial derivatives of the chain differential with respect to the formal variables $U$ and $V$, respectively. Note that, since the truncation from $CFK$ to $\uCFK$ identifies $U$ and $V$ into a single variable $U$, we see that $\iota_K$ induces a $\delta$-grading-preserving $\mathbb{F}_2[U]$-equivariant chain homotopy self-equivalence
\[
\iota_K \colon \uCFK(S^3,K) \rightarrow \uCFK(S^3,K),
\]
whose chain homotopy class depends on the oriented isotopy class of $K$. It is clear that, if we choose the opposite orientation of $K$, the map $\iota_K$ is replaced by its homotopy inverse $\iota_K ^{-1}$.

Since $\iota_K$ is now a $U$-equivariant chain homotopy equivalence of $\mathbb{F}[U]$-modules rather than a skew-equivariant chain homotopy equivalence of $\mathbb{F}[U,V]$-modules (which is the case in the oriented setting), we can define new knot invariants as follows.

\begin{defn}
Given an unoriented knot $K$, choose either orientation on $K$ and define the \emph{involutive unoriented knot Floer chain complex} $\uCFKI(K)$ as the mapping cone of $1+\iota_K \colon \uCFK(S^3,K)\rightarrow \uCFK(S^3,K)$, which is a chain complex of $\mathbb{F}_{2}[U,Q]/(Q^2)$-modules. This complex is endowed with a grading which is uniquely determined from the $\delta$-grading on $\uCFK(S^3,K)$ by the rule that the grading of $Q$ is $-1$. The homology of $\uCFKI(K)$, called the \emph{involutive unoriented knot Floer homology}, is denoted $\uHFKI(K)$.
\end{defn}

We note our grading conventions for the mapping cone differ from the conventions used in Hendricks-Manolescu's definition of involutive Heegaard Floer homology~\cite{hendricks2017involutive}.

\begin{rem}
If we denote the differential on $CFK(S^3,K)$ as $\partial_K$, then the differential $\partial$ on $\uCFKI(K)$ is given by $\partial(a+Qb) = \partial_K(a)+Q(a+\iota_K(a)+\partial_K(b))$.
\end{rem}

It is easy to see that the above definition is independent of the choice of orientation on $K$.
\begin{lem}
\label{mappingcone}
Let $C,D$ be chain complexes and $g \colon C\rightarrow D$ be a chain homotopy equivalence. Then for any chain map $f \colon C\rightarrow C$, the mapping cones of $f$ and $g\circ f$ are chain homotopy equivalent.
\end{lem}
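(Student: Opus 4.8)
The plan is to write down an explicit chain homotopy equivalence between the two mapping cones together with an explicit homotopy inverse. Working over $\mathbb{F}_2$ (so that signs play no role), realize $\mathrm{Cone}(f)$ as $C\oplus C$ with differential $\left(\begin{smallmatrix}\partial_C & 0\\ f & \partial_C\end{smallmatrix}\right)$ and $\mathrm{Cone}(g\circ f)$ as $C\oplus D$ with differential $\left(\begin{smallmatrix}\partial_C & 0\\ gf & \partial_D\end{smallmatrix}\right)$, the first summand carrying the usual degree shift. The first step is to check that the block-diagonal map $\varphi=\left(\begin{smallmatrix}\mathrm{id}_C & 0\\ 0 & g\end{smallmatrix}\right)\colon \mathrm{Cone}(f)\to\mathrm{Cone}(g\circ f)$ is a chain map; the only nontrivial point is that it commutes with the two differentials, and this reduces to the identity $g\partial_C=\partial_D g$, \ie to $g$ being a chain map.

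Next I would build a homotopy inverse. Fix a homotopy inverse $h\colon D\to C$ of $g$ together with chain homotopies $K\colon C\to C$ and $L\colon D\to D$ with $\mathrm{id}_C+hg=\partial_C K+K\partial_C$ and $\mathrm{id}_D+gh=\partial_D L+L\partial_D$, and set $\psi=\left(\begin{smallmatrix}\mathrm{id}_C & 0\\ Kf & h\end{smallmatrix}\right)$. A short computation using that $f$, $g$, $h$ are chain maps shows $\psi$ is a chain map. One then verifies directly that $\psi\varphi=\left(\begin{smallmatrix}\mathrm{id}_C & 0\\ Kf & hg\end{smallmatrix}\right)$ is chain homotopic to $\mathrm{id}_{\mathrm{Cone}(f)}$ through the homotopy $\left(\begin{smallmatrix}0 & 0\\ 0 & K\end{smallmatrix}\right)$; this uses only the defining relation for $K$. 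For the reverse composite $\varphi\psi=\left(\begin{smallmatrix}\mathrm{id}_C & 0\\ gKf & gh\end{smallmatrix}\right)$, the naive guess $\left(\begin{smallmatrix}0 & 0\\ 0 & L\end{smallmatrix}\right)$ yields $\mathrm{id}_{\mathrm{Cone}(g\circ f)}$ up to a residual term $(gK+Lg)\circ f$ in the lower-left corner. The point is that $\rho:=gK+Lg$ is itself a chain map $C\to D$, that $L':=L+\rho h$ is still a valid homotopy for $\mathrm{id}_D+gh$ (it differs from $L$ by a chain map), and that $gK+L'g=\partial_D(\rho K)+(\rho K)\partial_C$ is then an honest coboundary, the last identity using $hg\simeq\mathrm{id}_C$. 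Consequently the homotopy $\left(\begin{smallmatrix}0 & 0\\ \rho Kf & L'\end{smallmatrix}\right)$ exhibits $\varphi\psi\simeq\mathrm{id}_{\mathrm{Cone}(g\circ f)}$, so $\varphi$ is a chain homotopy equivalence, as desired.

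The one step that requires thought rather than routine multiplication of $2\times 2$ block matrices is the treatment of the residual term $(gK+Lg)\circ f$: one must recognize that it does not obstruct $\varphi$ being an equivalence, only the first choice of nullhomotopy, and that it can be absorbed by passing from $L$ to $L'=L+\rho h$. An alternative would be to invoke the standard fact that the data of a chain homotopy equivalence $g$ can always be refined so that the side relation $gK=Lg$ holds (over $\mathbb{F}_2$ the residual term then vanishes identically), or to deduce the lemma from the octahedral axiom in the homotopy category of chain complexes applied to $C\xrightarrow{f}C\xrightarrow{g}D$ together with the contractibility of $\mathrm{Cone}(g)$. I would present the explicit argument, as it is elementary, self-contained, and manifestly compatible with the $\mathbb{F}_2[U]$-module and grading structure that the application requires.
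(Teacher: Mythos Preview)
Your argument is correct and follows the same strategy as the paper: write down explicit block-matrix maps between the two cones and verify they are mutually homotopy inverse. Your version is in fact more careful. The paper takes $\psi(a,b)=(a,h(b))$ with no lower-left correction and asserts ``similarly $G$ is also a chain map,'' but that map fails to commute with the differentials: the obstruction is exactly the term $(1+hg)f(a)$ that your $Kf$ entry absorbs. Likewise the paper's homotopy $(0,H)$ for $G\circ F\sim\mathrm{id}$ produces an unaccounted-for $Hf(a)$ term. Your treatment of the reverse composite, replacing $L$ by $L'=L+\rho h$ to kill the residual $(gK+Lg)f$, is precisely the extra work the paper sweeps under a second ``similarly.'' So the two proofs are morally the same, but yours fills the gaps.
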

\begin{proof}
The differentials on the mapping cones of $f$ and $g\circ f$ are given by $\partial_{M(f)}(a,b)=(\partial a,f(a)+\partial b)$ and $\partial_{M(g\circ f)}(a,b)=(\partial a,g(f(a))+\partial b)$, respectively. Consider the maps $F:M(f)\rightarrow M(g\circ f)$ and $G:M(g\circ f)\rightarrow M(f)$, defined as $F(a,b)=(a,g(b))$ and $G(a,b)=(a,h(b))$, where $h$ is a homotopy inverse of $g$. Then we have 
\[
(\partial_{M(g\circ f)}F+F\partial_{M(f)})(a,b)=(\partial a,(g\circ f)(a)+\partial g(b))+(\partial a,g(f(a)+\partial b))=0,
\]
so $F$ is a chain map, and similarly $G$ is also a chain map. Now, if we denote a null-homotopy of $h\circ g - \mathbf{id}$ as $H$, then we have 
\[
G\circ F-\mathbf{id}=(0,h\circ g-\mathbf{id})=\partial_{M(f)}\circ (0,H) - (0,H)\circ \partial_{M(f)},
\]
so $G\circ F \sim \mathbf{id}$ and similarly $F\circ G \sim \mathbf{id}$. Therefore $F$ is a chain homotopy equivalence.
\end{proof}

\begin{thm}
\label{welldef}
Given a knot $K$, the chain homotopy equivalence class of $\uCFKI(K)$ as a chain complex of $\mathbb{F}_2[U,Q]/(Q^2)$-modules depends only on the isotopy class of $K$.
\end{thm}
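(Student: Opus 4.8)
The plan is to reduce \Cref{welldef} to a purely algebraic statement about mapping cones of self-maps, and then feed it the naturality of $\uCFK$ together with the construction of $\iota_K$. A doubly-pointed admissible Heegaard diagram $H$ for $(S^3,K)$, together with a chosen orientation on $K$, determines the $\delta$-graded $\mathbb{F}_2[U]$-complex $\uCFK(S^3,K)$ and the chain homotopy self-equivalence $\iota_{K}=\iota_{K,H}$, and any two such diagrams $H_1,H_2$ are connected by an $\mathbb{F}_2[U]$-equivariant chain homotopy equivalence $\Theta$ between the complexes $\uCFK(S^3,K)$ they compute, canonical up to chain homotopy by naturality of link Floer homology \cite{juhasz2018naturality}. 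Unwinding the definition $\iota_{K}=n\circ\tau\circ\eta$ and using that the conjugation map $\eta$, the half-twist map $\tau$, and the naturality map $n$ are each natural under changes of Heegaard diagram, one checks that $\Theta$ intertwines the two involutions up to chain homotopy:
\[
\Theta\circ\iota_{K,H_1}\;\simeq\;\iota_{K,H_2}\circ\Theta .
\]
(This is the unoriented-truncated analogue of the corresponding compatibility in involutive Heegaard Floer homology \cite{hendricks2017involutive}; it is really part of the content of the statement that the homotopy class of $\iota_K$ is a knot invariant.) Hence \Cref{welldef} follows once we establish the following algebraic fact: if $\phi\colon C\to C'$ is an $\mathbb{F}_2[U]$-equivariant chain homotopy equivalence and $\iota\colon C\to C$, $\iota'\colon C'\to C'$ are $\mathbb{F}_2[U]$-equivariant chain maps with $\phi\circ\iota\simeq\iota'\circ\phi$, then the mapping cones of $1+\iota$ and $1+\iota'$ are chain homotopy equivalent as complexes of $\mathbb{F}_2[U,Q]/(Q^2)$-modules.

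To prove this, note first that $\phi\circ(1+\iota)\simeq(1+\iota')\circ\phi$, and fix an $\mathbb{F}_2[U]$-equivariant chain homotopy $h$ realizing this relation. Recalling from the remark after the definition of $\uCFKI$ that $\mathrm{Cone}(1+\iota)$ has differential $\partial(a+Qb)=\partial a+Q\big((1+\iota)(a)+\partial b\big)$ and $Q$-action $Q\cdot(a+Qb)=Qa$, define $\Phi\colon\mathrm{Cone}(1+\iota)\to\mathrm{Cone}(1+\iota')$ by $\Phi(a+Qb)=\phi(a)+Q\big(h(a)+\phi(b)\big)$. By inspection $\Phi$ is $\mathbb{F}_2[U,Q]/(Q^2)$-equivariant, and a direct computation using that $\phi$ is a chain map and that $h$ is a homotopy between $\phi\circ(1+\iota)$ and $(1+\iota')\circ\phi$ shows that $\Phi$ is a chain map. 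Choosing a homotopy inverse $\psi$ of $\phi$, one has $\psi\circ(1+\iota')\simeq(1+\iota)\circ\psi$ as well, so there is an analogous $\mathbb{F}_2[U,Q]/(Q^2)$-equivariant chain map $\Psi$ in the other direction, and it remains to check $\Psi\circ\Phi\simeq\mathrm{id}$ and $\Phi\circ\Psi\simeq\mathrm{id}$ through $\mathbb{F}_2[U,Q]/(Q^2)$-equivariant chain homotopies. As in the proof of \Cref{mappingcone}, these are produced from a chain homotopy $\psi\circ\phi\simeq\mathrm{id}_C$ on the $C$-summand together with an explicit correction term on the $QC$-summand that absorbs the remaining $Q$-part; this computation is routine. (Alternatively, one can first invoke \Cref{mappingcone} and the homotopy-invariance of mapping cones under homotopic maps to obtain an equivalence of the underlying $\mathbb{F}_2[U]$-complexes, and then upgrade it to a $Q$-equivariant one by this same bookkeeping.)

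Putting the two paragraphs together proves that $\uCFKI(K)$ is, up to $\mathbb{F}_2[U,Q]/(Q^2)$-equivariant chain homotopy equivalence, independent of the Heegaard diagram used in its definition. Independence of the chosen orientation on $K$ was already observed above: reversing the orientation replaces $\iota_K$ by a homotopy inverse $\iota_K^{-1}$, which on $\uCFK(S^3,K)$ is chain homotopic to $\iota_K$ (so $1+\iota_K^{-1}\simeq 1+\iota_K$), and the $\phi=\mathrm{id}$ case of the algebraic fact then gives $\mathrm{Cone}(1+\iota_K)\simeq\mathrm{Cone}(1+\iota_K^{-1})$ over $\mathbb{F}_2[U,Q]/(Q^2)$. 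The main technical point — and the only place genuine geometric input is needed — is the verification of the intertwining relation $\Theta\circ\iota_{K,H_1}\simeq\iota_{K,H_2}\circ\Theta$; everything after that is a matter of carefully tracking $\mathbb{F}_2[U,Q]/(Q^2)$-linearity, since the naive mapping-cone manipulations (such as \Cref{mappingcone} applied on its own) only respect the underlying $\mathbb{F}_2[U]$-complex structure.
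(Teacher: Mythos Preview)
Your treatment of the Heegaard-diagram independence is more explicit than the paper's (the paper simply imports the fact that the chain homotopy class of $\iota_K$ on $\uCFK$ depends only on the oriented isotopy class of $K$ and passes directly to the mapping cone), but the algebraic lemma you isolate is correct and your argument there is fine.

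The gap is in your handling of orientation independence. You assert that on $\uCFK(S^3,K)$ the map $\iota_K^{-1}$ is chain homotopic to $\iota_K$, and hence $1+\iota_K^{-1}\simeq 1+\iota_K$. This is false in general: already for the figure-eight knot, using the explicit description of $\iota_K$ given in the paper one computes $\iota_K^2(a)=a+d$ while $\iota_K^2$ fixes $b,c,d,x$, so $\iota_K^2-\mathbf{id}$ is the nonzero map $a\mapsto d$. Since all generators sit in $\delta$-grading $0$ and $U$ has grading $-1$, there are no elements of grading $+1$ available to build a homotopy, and $\iota_K^2\not\sim\mathbf{id}$ on $\uCFK$. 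Thus $\iota_K\not\sim\iota_K^{-1}$, and your $\phi=\mathrm{id}$ special case does not apply.

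The paper circumvents this by a different algebraic trick: it writes $1+\iota_K^{-1}=\iota_K^{-1}\circ(1+\iota_K)$ and invokes \Cref{mappingcone}, which says that for a chain homotopy equivalence $g$ the cones of $f$ and $g\circ f$ are chain homotopy equivalent. The point is that one does not need $\iota_K\simeq\iota_K^{-1}$; it suffices that $\iota_K^{-1}$ is a homotopy equivalence. You could repair your argument by replacing the false claim with this factorization (and then, if you wish, running your more careful $Q$-equivariant bookkeeping on the resulting cone comparison).
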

\begin{proof}
Choose an orientation on $K$, which then defines $\iota_K$ on $\uCFKI(K)$. We have to prove that the mapping cone of $1+\iota_K$ is chain homotopy equivalent to the mapping cone of $1+\iota_K^{-1}$. But since
\[
1+\iota_K^{-1}=\iota_K^{-1}\circ (1+\iota_K)
\]
and $\iota_K$ is a chain homotopy self-equivalence of $\uCFKI(K)$, \Cref{mappingcone} implies that the two mapping cones are indeed chain homotopy equivalent.
\end{proof}

The following lemma tells us about the structure of $\uHFKI$. For simplicity, we denote the localization by introducing a formal inverse of $U$ as $U^{-1}$.
\begin{lem}
\label{structurelem}
We have $U^{-1}\uHFKI(K)\simeq \mathbb{F}_2[U^{\pm 1},Q]/(Q^2)$.
\end{lem}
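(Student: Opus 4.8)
The plan is to compute $U^{-1}\uHFKI(K)$ by first computing $U^{-1}\uHFK(S^3,K)$ and then analyzing the effect of the map $1+\iota_K$ on this localization. The key point is that $\uHFK$ arises by truncating the full knot Floer complex $CFK(S^3,K)$ by identifying all the $U_i,V_i$ with a single variable $U$ and then taking the $\delta$-graded homology; after inverting $U$, the truncated complex is quasi-isomorphic to the ``large surgery''-type localization, and one knows from \cite{fan2019unoriented} (and from the standard fact that $U^{-1}HFK^-$ of any knot is a single tower) that $U^{-1}\uHFK(S^3,K)\simeq \mathbb{F}_2[U^{\pm1}]$, a free rank-one module over $\mathbb{F}_2[U^{\pm1}]$, concentrated in the appropriate $\delta$-grading. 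I would isolate this as the first step, citing the structure of the localized complex; alternatively one can see it directly from \Cref{unknottedmobius}-style reasoning, since $\uHFK$ of the unknot is $\mathbb{F}_2[U]$ and inverting $U$ of any knot's complex reduces to that of the unknot by a sequence of (skein/cobordism) maps that become isomorphisms after localization.

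Next I would study $\iota_K$ on $U^{-1}\uHFK(S^3,K)$. Since this localized homology is a free rank-one $\mathbb{F}_2[U^{\pm1}]$-module and $\iota_K$ is a $\delta$-grading-preserving $\mathbb{F}_2[U]$-equivariant chain homotopy self-equivalence, after localizing it induces an $\mathbb{F}_2[U^{\pm1}]$-module automorphism of $\mathbb{F}_2[U^{\pm1}]$ preserving the grading, hence multiplication by a unit of the correct degree, which forces it to be the identity. Therefore $1+\iota_K$ localizes to the zero map on homology. It follows that the mapping cone $\uCFKI(K)=\mathrm{Cone}(1+\iota_K)$, after inverting $U$, has homology fitting in a short exact sequence coming from the mapping cone, which splits because everything in sight is free over $\mathbb{F}_2[U^{\pm1}]$: we get $U^{-1}\uHFKI(K)\cong \mathbb{F}_2[U^{\pm1}]\oplus Q\cdot\mathbb{F}_2[U^{\pm1}]\cong \mathbb{F}_2[U^{\pm1},Q]/(Q^2)$ as $\mathbb{F}_2[U^{\pm1},Q]/(Q^2)$-modules.

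To make the mapping-cone step precise I would use the long exact sequence of the mapping cone of $1+\iota_K\colon U^{-1}\uCFK(S^3,K)\to U^{-1}\uCFK(S^3,K)$: since the connecting map is (up to homotopy) induced by $1+\iota_K$, which we showed is zero on homology, the long exact sequence breaks into short exact sequences $0\to U^{-1}\uHFK(S^3,K)\to U^{-1}\uHFKI(K)\to Q\cdot U^{-1}\uHFK(S^3,K)\to 0$, and freeness over $\mathbb{F}_2[U^{\pm1}]$ gives the splitting. One then checks that the resulting $\mathbb{F}_2[U^{\pm1},Q]/(Q^2)$-module structure is the free one, which is immediate since the $Q$-action carries the first summand isomorphically onto the second.

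The main obstacle I expect is the first step: rigorously identifying $U^{-1}\uHFK(S^3,K)$ with $\mathbb{F}_2[U^{\pm1}]$. One has to be careful that the truncation identifying all $U_i$ and $V_i$ with one variable $U$ interacts well with localization, and that the $\delta$-grading (rather than the Alexander/Maslov bigrading, which the truncation destroys) is still enough to pin down the answer up to grading shift. Modulo that input, which should follow from the results of \cite{fan2019unoriented} together with the standard computation of $U^{-1}HFK^-$, the remaining argument is the formal mapping-cone manipulation above, which is routine.
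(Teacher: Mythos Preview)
Your proposal is correct and follows essentially the same approach as the paper: identify $U^{-1}\uCFK(S^3,K)\simeq \mathbb{F}_2[U^{\pm1}]$ via truncation of the localized full knot Floer complex, deduce that $\iota_K$ becomes the identity after localization, and conclude that the mapping cone of $1+\iota_K$ localizes to $\mathbb{F}_2[U^{\pm1},Q]/(Q^2)$. The only cosmetic difference is that the paper works at the chain level (noting $U^{-1}\iota_K\sim\mathbf{id}$ and that localization commutes with taking mapping cones), whereas you pass to homology and invoke the long exact sequence of the cone; both routes are straightforward once the rank-one identification is in hand.
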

\begin{proof}
Since $U^{-1}\uCFK(S^3,K)$ is the truncation of $(U,V)^{-1}CFK(S^3,K)$ by the identification $U=V$ and $(U,V)^{-1}CFK(S^3,K)\simeq (U,V)^{-1}\mathbb{F}_{2}[U,V]$, we know that $U^{-1}\uCFK(S^3,K)\simeq U^{-1}\mathbb{F}_{2}[U]\simeq \mathbb{F}_{2}[U^{\pm 1}]$. Thus $U^{-1}\iota_K$, the localization of $\iota_K$ at $U$, is chain homotopic to the identity map. Now the lemma follows from the fact that a localization of a mapping cone is chain homotopy equivalent to the mapping cone of a localized map.
\end{proof}

The above lemma, combined with \Cref{welldef}, allows us to make the following definition.

\begin{defn}
Given a knot $K$, we define two knot invariants $\bar{\upsilon}(K)$ and $\ubar{\upsilon}(K)$ as follows:
\[
\begin{split}
    \ubar{\upsilon}(K) &= \max \left\{ r \,\vert\, \exists x\in \uHFKI(K),\,\mathbf{gr}_{\delta}(x)=r,\,\forall n,\,U^n x\notin \text{Im}(Q) \right\}, \\
    \bar{\upsilon}(K) &= \max \left\{ r \,\vert\, \exists x \in \uHFKI(K),\,\mathbf{gr}_{\delta}(x)=r,\,\forall n,\,U^n x\ne 0;\exists m\ge 0 \text{ s.t. } U^m x\in \text{Im}(Q)\right\} +1,
\end{split}
\]
where $\mathbf{gr}_{\delta}(x)$ denotes the (normalized) $\delta$-grading of a homogeneous element $x$.
\end{defn}

We note that the grading shifts in the above definition differ from the corresponding grading shifts in the definitions of $\underline{d}$ and $\overline{d}$ given by Hendricks-Manolescu~\cite[Section 5]{hendricks2017involutive} on account of the different grading convention for mapping cones we used in our definition of involutive unoriented knot Floer homology.

As mentioned in the introduction, the invariants $\ubar{\upsilon}$ and $\bar{\upsilon}$ coincide with the values at $t=1$ of the involutive Upsilon invariants $\ubar{\Upsilon}_t$ and $\bar{\Upsilon}_t$ defined by Hogancamp and Livingston. It thus follows from \cite[Proposition 11 and Theorem 12]{hogancamp2017involutive} that $\ubar{\upsilon}$ and $\bar{\upsilon}$ are (smooth) concordance invariants and $\bar{\upsilon}(K) \ge \upsilon(K) \ge \ubar{\upsilon}(K)$.

\begin{exmp}\label{ex:figeight}
Consider the figure-eight knot $K=4_1$. Its unoriented knot Floer complex has generators $a,b,c,d,x$, whose gradings are all $0$, and where the differential is given by 
\[
    \partial a = U(b+c), \,
    \partial b = \partial c = Ud, \,
    \partial x = \partial d = 0.
\]
The involution $\iota_K$, which was computed in \cite[Section 8.2]{hendricks2017involutive}, is given by 
\[
    \iota_K(a) = a+x, \,
    \iota_K(b) = c, \,
    \iota_K(c) = b, \,
    \iota_K(d) = d, \,
    \iota_K(x) = x+d.
\]
We will calculate $\bar{\upsilon}(K)$ and $\ubar{\upsilon}(K)$ by explicitly computing the cycles of $\uCFKI(K)$. Recall that the differential of $\uCFKI(K)$ is given by $\partial (s+Qt)=\partial s+Q(s+\iota_K(s)+\partial t)$. Hence an element $a+Qb\in \uCFKI(K)$ is a cycle if and only if $a$ is a cycle in $\uCFK(S^3,K)$ and $(1+\iota_K)(a)=\partial b$. 

The space of cycles in $\uCFK(S^3,K)$ is generated over $\mathbb{F}_{2}[U]$ by $x$, $b+c$, and $d$, whose images under the action of $1+\iota_K$ are given by $d$, $0$, and $0$, respectively. Hence the space of cycles in $\uCFKI(K)$ is generated over $\mathbb{F}_{2}[U,Q]/(Q^2)$ by the following cycles:
\[
Ux+Qb,b+c,\,d,\, Qx.
\]
Let $\partial'$ denote the differential on $\uCFK(K)$. Since $ \partial'(b)=Ud+Q(b+c)$, $Ud$ is in the image of $Q$ in $\uHFKI(K)$. Likewise $\partial'(a+Qx)=U(b+c)+Q(x)$, so that $U(b+c)$ is in the image of $Q$ in $\uHFKI(K)$.

On the other hand, if $Ux+Qb$ contained in the image of $Q$  in $\uHFKI(K)$ for some $n\ge 0$, then we should have 
\[
U^{n}(Ux+Qb) = Qw+\partial_1(y+Qz)
\]
for elements $w,y,z\in \uCFK(S^3,K)$. It follows that $\partial y=U^{n+1}(x)$, a contradiction. Hence the homology class of $U^n(Ux+Qb)$ is not contained in the image of $Q$ for any $n\ge 0$. Finally observe that if $U^n(b+c)=0$ in homology then there exist elements $z,y\in\uCFK(K)$, $n\geq 0$ such that $\partial z=U^n(b+c)$, $(1+\iota_K)(x)+\partial y=0$. It follows that $z=U^{n-1}a$, whence $U^{n-1}a+\partial y=0$, a contradiction.

Now, using the definition of $\bar{\upsilon}$ and $\ubar{\upsilon}$, we have
\[
\begin{split}
    \bar{\upsilon}(K) &= \mathbf{gr}_{\delta}(b+c)+1 = 1 \qquad \text{and} \\
    \ubar{\upsilon}(K) &= \mathbf{gr}_{\delta}(Ux+Qb) = -1. 
\end{split}
\]
Together with \Cref{mobiusthm}, which will be proven in this section, we recover the fact that the figure-eight knot does not bound a smooth Möbius band in $B^4$, which was proven in the topological category in \cite{viro}. Note that, since the figure-eight knot is rationally slice, we have $\upsilon(K)=\sigma(K)=0$.

In general, via a straightforward computation using model complexes as given in \cite[Section 8.3]{hendricks2017involutive}, it is easy to see that the involutive upsilon invariants of a thin knot $K$ (which includes all quasialternating knots) are given by 
\begin{align*}
    \bar{\upsilon}(K)=\upsilon(K)=\ubar{\upsilon}(K)=0 &\text{ if } \sigma(K)+4\arf(K) \equiv 0 \mod 8, \\
    \bar{\upsilon}(K)=1,\,\upsilon(K)=0,\,\ubar{\upsilon}(K)=-1 &\text{ if } \sigma(K)+4\arf(K) \equiv 4 \mod 8.
\end{align*}
We thus see that a thin knot $K$ does not bound a smooth M\"{o}bius band in $B^4$ if $\sigma(K)+4\arf(K) \equiv 4 \mod 8$, which is the same as Yasuhara's obstruction from \Cref{lem:yasuhara}.
\end{exmp}

Now we move on to developing involutive obstructions and bounds on the first Betti number for nonorientable slice-surfaces bounding a given knot. Recall that, given a knot $K$ with two prescribed points $p,q\in K$, the map $\iota_K$ on the unoriented knot Floer complex is defined by composing the canonical conjugation map
\[
\eta_K  \colon \uCFK(K,p,q)\rightarrow \uCFK(K,q,p)
\]
together with the ``half twist'' map (which depends on an orientation of $K$)
\[
\tau_K \colon \uCFK(K,q,p)\rightarrow \uCFK(K,p,q).
\]
Note that the half twist map can be seen as a cobordism map induced by the ``half twist cobordism'', as shown in Figure \ref{fig:halftwistcobordism}. In particular, it is not a skew-isomorphism anymore; it is a genuine isomorphism, as we are now working with $\uCFK$, where the actions of $U$ and $V$ coincide.

\begin{figure}[htbp]
    \centering
    \includegraphics[width=0.3\textwidth]{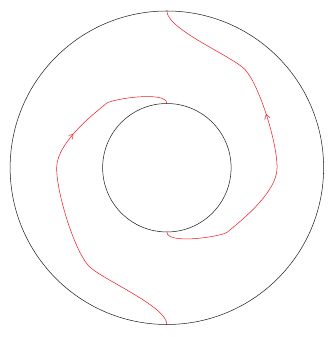}
    \caption{The cobordism which induces the half twist map $\tau_K$. Note that the underlying surface is $K\times I$.}
\label{fig:halftwistcobordism}
\end{figure}

For more generality, we extend the definition of canonical conjugation map to disoriented links. Given any disoriented link $\mathcal{L}=(L,\p,\q,\bfl)$, we also have a canonically defined conjugation map
\[
\eta_{\mathcal{L}} \colon \uCFL(\mathcal{L})\rightarrow \uCFL(\bar{\mathcal{L}}),
\]
where $\bar{\mathcal{L}}=(L,\q,\p,\bfl)$. 

\begin{lem}
\label{conjugationcomm}
Let $\mathcal{S}=(S,a)$ be a disoriented cobordism between disoriented links $\mathcal{L}_1$ and $\mathcal{L}_2$. Then we have
\[
F_{(S,-a)} \circ \eta_{\mathcal{L}_1} \sim \eta_{\mathcal{L}_2} \circ F_{(S,a)},
\]
where $-a$ denotes $a$ endowed with the reverse orientation.
\end{lem}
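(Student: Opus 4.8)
\emph{Proof proposal.} The plan is to decompose the disoriented cobordism $(S,a)$ into elementary pieces, verify the commutation relation on each piece, and reassemble via functoriality. First note that $(S,-a)$ is indeed a disoriented cobordism from $\overline{\mathcal{L}_1}$ to $\overline{\mathcal{L}_2}$, since reversing the orientation of $a$ negates $\partial a=\q_1-\p_1+\p_2-\q_2$, which swaps the roles of the $\p$- and $\q$-basepoints at both ends. After choosing a generic Morse function on $S$ together with a compatible handle decomposition, we may write $(S,a)=(S_N,a_N)\circ\cdots\circ(S_1,a_1)$, a composition of elementary disoriented cobordisms, where each $(S_i,a_i)$ is one of: (i) a product cobordism realizing an isotopy of the underlying link, with $a_i$ a disjoint union of vertical arcs having no critical points; (ii) a birth or death of a small split unknot; (iii) an \emph{oriented} saddle, i.e.~a band move whose band is attached compatibly with the local orientation recorded by $a_i$; or (iv) a \emph{half-twist cobordism}, whose underlying surface is $K\times I$ and along which $a_i$ has a single critical point (the cobordism of \Cref{fig:halftwistcobordism}). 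Any nonorientable band move is a composition of pieces of types (iii) and (iv), so these four types suffice. Since reversing the orientation of $a$ reverses it on each $(S_i,a_i)$, \Cref{fansthm} yields $F_{(S,a)}\sim F_{(S_N,a_N)}\circ\cdots\circ F_{(S_1,a_1)}$ and $F_{(S,-a)}\sim F_{(S_N,-a_N)}\circ\cdots\circ F_{(S_1,-a_1)}$; as $\eta_{\mathcal{L}}$ is a chain isomorphism for each intermediate disoriented link $\mathcal{L}$, it suffices to prove $F_{(S_i,-a_i)}\circ\eta\sim\eta\circ F_{(S_i,a_i)}$ for each elementary piece and paste the resulting squares together.

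For pieces of types (i)--(iii) the underlying surface is orientable, so by \Cref{oriented-equivalent} the map $F_{(S_i,a_i)}$ is the truncation of Zemke's decorated link cobordism map, and the identity we want is a form of the conjugation symmetry of Zemke's oriented link Floer TQFT, once one isolates the half-twist contribution from the conjugation map $\eta$ (which is possible here precisely because $U$ and $V$ have been identified). Alternatively one can argue directly on (multi-pointed) Heegaard diagrams: $\eta$ acts by reversing the orientation of the Heegaard surface, interchanging $\mathbf{\alpha}$ and $\mathbf{\beta}$, and swapping the $\p$- and $\q$-basepoints, and this operation commutes with the holomorphic triangle counts, the (de)stabilization maps, and the basepoint-moving maps out of which $F$ is built, since reversing $\Sigma$ together with the curve swap is a diffeomorphism-induced symmetry of the relevant moduli spaces; for births and deaths the commutation is immediate. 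For the half-twist cobordism (type (iv)), the commutation is essentially built into the definition $\iota_K=n\circ\tau_K\circ\eta_K$: the composite attached to the half-twist piece for the strand with reversed orientation is the homotopy inverse of the one for the original strand (cf.~the observation that reversing the orientation of $K$ replaces $\iota_K$ by $\iota_K^{-1}$), so the square with $\eta$ reduces to the facts that in the truncated theory a half-twist and its reverse are mutually inverse \emph{isomorphisms} of $\uCFK$ and that $\eta$ is equivariant under the half-twist diffeomorphism, the latter being checked on the local model of \Cref{fig:halftwistcobordism}.

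I expect the main obstacle to be the nonorientable bookkeeping: the commutation squares for the half-twist piece and for the disorientation of a saddle, where one must track precisely how reversing the orientation of $a$ interchanges the $\p$- and $\q$-basepoints near a critical point of $a$ and then match this against the effect of $\eta$. Secondary technical points are that the elementary decomposition can be arranged with all critical points of the Morse function on $S$ and of $a$ at distinct levels, and that the cobordism maps are only well-defined up to chain homotopy --- but since $\eta$ is a chain isomorphism and each elementary square commutes up to homotopy, pasting the squares presents no difficulty. The orientable cases should be routine given Zemke's conjugation symmetry together with \Cref{oriented-equivalent}.
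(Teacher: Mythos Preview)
Your overall strategy is the paper's: decompose $(S,a)$ into elementary pieces, cite Zemke's conjugation symmetry \cite[Theorem~1.3]{zemke2019connected} (via \Cref{oriented-equivalent}) for the oriented pieces, and check the remaining nonorientable piece by hand. The paper's proof is two sentences and differs from yours only in how that last piece is handled. Rather than factoring a nonorientable band move as an oriented saddle composed with a half-twist, the paper keeps Fan's nonorientable saddle as an elementary piece and simply notes that in Fan's construction this map is a holomorphic triangle count on a Heegaard triple-diagram; since $\eta$ is the diagram-level symmetry ``reverse $\Sigma$, swap $\alpha/\beta$, swap basepoints'', it acts on the triple-diagram and intertwines the triangle map tautologically, exactly as in the oriented case. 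That is the whole argument. Your route --- replace the nonorientable saddle by a half-twist plus oriented saddle --- is valid by Fan's functoriality, but it buys nothing: you have traded one elementary verification for an extra decomposition step plus a different elementary verification.

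The weak point is your treatment of the half-twist piece. Invoking $\iota_K=n\circ\tau_K\circ\eta_K$ and the fact that reversing the orientation of $K$ replaces $\iota_K$ by $\iota_K^{-1}$ is a red herring: that identity packages $n$, $\tau_K$, and $\eta$ together and does not unpack to the square $F_{(S,-a)}\circ\eta\sim\eta\circ F_{(S,a)}$ you need. What you actually use (and say only in the final clause) is that $\tau_K$ is induced by a diffeomorphism of the basepointed pair $(S^3,K,p,q)$, and diffeomorphism-induced maps intertwine $\eta$ at the level of Heegaard diagrams --- once stated this way the check is immediate and independent of $\iota_K$. You should also verify explicitly that $(S,-a)$ for the half-twist induces the appropriate conjugate half-twist map so that the square closes; this is clear from the local model but you have not said it. None of this is wrong, just more circuitous and less cleanly justified than the paper's one-line observation about triangle maps.
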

\begin{proof}
Recall that the cobordism maps in Fan's unoriented link Floer TQFT are computed by decomposing a given cobordism into elementary disoriented link cobordisms and composing the maps corresponding to each of these pieces. Among all possible pieces, we only have to consider nonorientable saddle moves, since this Lemma is verified in all other cases in the proof of \cite[Theorem 1.3]{zemke2019connected} (for compatible decorated link cobordisms). Since the cobordism map for a saddle move is defined via a count of pseudoholomorphic triangles on certain simple Heegaard triple-diagrams -- see the proof of \cite[Theorem 5.1.4]{fan2019unoriented} -- they homotopy-commute with conjugation maps.
\end{proof}

We now have to check that the half twist map $\tau_K$ also homotopy-commutes with (disoriented) cobordism maps. When the given cobordism is a M{\" o}bius band, then the commutativity is very easy to observe.
\begin{lem}
\label{Mobiuscomm}
Let $\mathcal{S}=(S,a)$ be a disoriented cobordism from a disoriented knot $K=(K,\p,\q,\bfl)$ to the empty link, where $\p$ and $\q$ both consist of a single point. Suppose that $S$ is a smoothly embedded M{\" o}bius band and $a$ is a simple arc. Then, for any choice of orientation on $K$, we have $F_{(S,-a)} \circ \tau_K \sim F_{(S,a)}$, where $-a$ denotes $a$ endowed with the reverse orientation. Moreover, if $\bar{S}$ denotes the cobordism from the empty link to $K$ given by flipping $S$ upside-down, then we have $\tau_K \circ F_{(\bar{S},a)} \sim F_{(\bar{S},-a)}$.
\end{lem}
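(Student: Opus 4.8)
The plan is to realize the half twist map as a cobordism map and then reduce the statement to an isotopy of disoriented cobordisms, via the functoriality of Fan's TQFT (\Cref{fansthm}). Recall that $\tau_K$ is, by construction, the cobordism map $F_{\mathcal{T}_K}$ induced by the \emph{half twist cobordism} $\mathcal{T}_K=(K\times I,a_\tau)$ pictured in \Cref{fig:halftwistcobordism}: its underlying surface is the product $K\times I$, and $a_\tau$ is the decorating arc that records the half-turn of a tubular neighborhood of $K$ in the Heegaard surface. Stacking $\mathcal{T}_K$ below $(S,-a)$ and applying functoriality gives
\[
F_{(S,-a)}\circ\tau_K \;=\; F_{(S,-a)}\circ F_{\mathcal{T}_K}\;\sim\; F_{(S,-a)\circ\mathcal{T}_K},
\]
so it suffices to show that the composite disoriented cobordism $(S,-a)\circ\mathcal{T}_K$ is isotopic, as a disoriented cobordism, to $(S,a)$.

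I would then analyze this composite geometrically. Its underlying surface is obtained from $S$ by gluing a copy of $K\times I$ along $K$; since the half-turn of the normal neighborhood is a rigid rotation, it leaves the surface framing unchanged, so the resulting surface is ambiently isotopic to $S$ and is again a M{\"o}bius band. Thus $(S,-a)\circ\mathcal{T}_K$ is the M{\"o}bius band $S$ equipped with a simple decorating arc $a'$, obtained from $-a$ by dragging its endpoints through the half-twist collar. The crucial point --- and this is exactly where the hypothesis that $S$ is a M{\"o}bius band enters --- is that $a'$ is isotopic to $a$ on $S$: after the puncturing used to define cobordism maps into $B^4$, the surface $S\setminus a$ is a disk, so on a once-punctured M{\"o}bius band the isotopy class of a simple decorating arc is governed by its endpoint data together with how it winds around the core circle, and the half twist contributes precisely the winding needed to carry $(S,-a)$ to $(S,a)$; equivalently, one slides the half-twist region once around the core of $S$ and absorbs it. This identifies $(S,-a)\circ\mathcal{T}_K$ with $(S,a)$ and proves the first assertion.

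The second assertion follows from the same argument read upside down: stacking $\mathcal{T}_K$ above $(\bar S,a)$ and using functoriality gives $\tau_K\circ F_{(\bar S,a)}\sim F_{\mathcal{T}_K\circ(\bar S,a)}$, and the mirror of the isotopy above identifies $\mathcal{T}_K\circ(\bar S,a)$ with $(\bar S,-a)$; alternatively one can deduce it by dualizing the first statement. I expect the main obstacle to be making the isotopy in the second paragraph fully rigorous --- in particular, tracking the decorating arc and the two basepoints through the half-twist collar and confirming that the required isotopy genuinely exists on a M{\"o}bius band. Tellingly, this isotopy fails once the first Betti number of the surface exceeds $1$, which is the geometric source of the discrepancy recorded in \Cref{rem:difference}.
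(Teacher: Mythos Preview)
Your proposal is correct and follows essentially the same approach as the paper. The paper's entire proof is a reference to \Cref{fig:mobiuscomm}, which depicts exactly the isotopy you describe: the half-twist collar glued to the M{\"o}bius band $(S,-a)$ (or $(\bar S,-a)$) is isotopic to $(S,a)$ (respectively $(\bar S,a)$) by sliding the extra half-twist once around the core, so your verbal account is a faithful elaboration of the picture.
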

\begin{proof}
This follows directly from Figure \ref{fig:mobiuscomm}.
\end{proof}

\begin{figure}[htbp]
    \centering
    \includegraphics[width=0.7\textwidth]{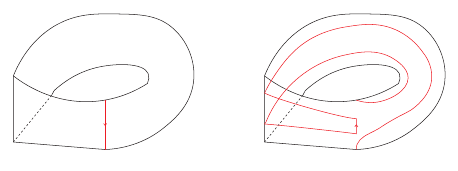}
    \caption{Left, the cobordism $\mathcal{S}$. Right, the cobordism representing $\tau_K \circ (\bar{S},-a)$. Note that the cobordism on the right is isotopic to the one on the left.}
\label{fig:mobiuscomm}
\end{figure}

With \Cref{Mobiuscomm} at hand, we can now prove \Cref{mobiusthm}.
\begin{proof}[Proof of \Cref{mobiusthm}]
Let $M$ be a smooth M{\" o}bius band in $B^4$ bounding $K$. Choose a properly embedded simple arc $a\subset M$ such that $M\backslash a$ is a rectangle. Then $(M,a)$ is a disoriented cobordism from $K$ to the empty link and $(-M,a)$ is a disoriented cobordism from the empty link to $K$. By choosing a interior point $p\in a\cap \text{int}(M)$ and its small neighborhood $N(p)\subset B^4$, we can consider two disoriented cobordisms $(M\backslash N(p),a\backslash N(p))$ and $(-M\backslash N(p),a\backslash N(p))$, which are cobordism from $K$ to unknot and unknot to $K$, respectively; note that $F_{(M\backslash N(p),a\backslash N(p))}=F_{(M,a)}$ and $F_{(-M\backslash N(p),a\backslash N(p))}=F_{(-M,a)}$. Then, using \cite[Proposition 5.4]{gong2021non} on the punctured cobordism $(M\backslash N(p),a\backslash N(p))$, we see that there exists an orientable disoriented cobordism $T$ with first Betti number $1$ from $K$ to $K$, shown in Figure \ref{fig:cobordismT}, such that 
\[
U^c \cdot F_T \circ F_{(-M,a)} \circ F_{(M,a)} \sim U^{b-m+1} \cdot \mathbf{id},
\]
 where $m,b,c$ are the numbers of local minima, saddles, and local maxima of $M$; note that $b-m-c=1$ since $M$ is a Mobius band. Since the underlying space of $T$ is an orientable cobordism, $F_T$ is homotopic to the $U=V$ truncation of the cobordism map induced by $T$ (viewed as a compatible decorated link cobordism) with some orientation in Zemke's oriented link Floer TQFT. Theorem C of \cite{zemke2019link} implies that $U^{-1}F_T \sim U\cdot \mathbf{id}$ -- where here $U^{-1}F_T$ is the localization of $F_T$ at $U$ -- so we see that the localized map $U^{-1}(F_{(-M,a)} \circ F_{(M,a)})$ is a quasi-isomorphism. Since $U^{-1}\uHFK(S^3,K)\simeq \mathbb{F}_2[U^{\pm 1}]$, we see that the localized maps $U^{-1}F_{(-M,a)}$ and $U^{-1}F_{(M,a)}$ are both chain homotopic to the identity. Also, since $U^{-1}F_T \sim U\cdot \mathbf{id}$, we know that $F_T$ shifts the $\delta$-grading by $-1$. Hence, if we denote the $\delta$-grading shifts of $F_{(-M,a)}$ and $F_{(M,a)}$ by $\mathbf{gr}_\delta (F_{(-M,a)})$ and $\mathbf{gr}_\delta (F_{(M,a)})$, then we have 
\[
\mathbf{gr}_\delta (F_{(-M,a)})+\mathbf{gr}_\delta (F_{(M,a)})=-1.
\]

\begin{figure}[htbp]
    \centering
    \includegraphics[width=0.3\textwidth]{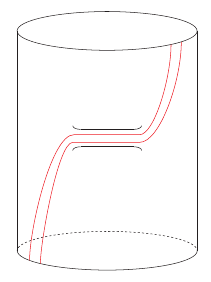}
    \caption{The orientable cobordism $T$ of first Betti number $1$.}
\label{fig:cobordismT}
\end{figure}

Choose either orientation on $K$. \Cref{conjugationcomm} and \Cref{Mobiuscomm} imply that  $F_{(M,a)}$ and $F_{(-M,a)}$ satisfy the following conditions:
\[
F_{(M,a)} \circ \iota_K  \sim F_{(M,a)}, \,\iota_K\circ F_{(-M,a)}\sim F_{(-M,a)}.
\]
Hence, by choosing relevant chain homotopies, we get maps between involutive homology:
\[
\begin{split}
F \colon & \uHFKI(K)\rightarrow \mathbb{F}_2[U,Q]/(Q^2), \\
G \colon & \mathbb{F}_2[U,Q]/(Q^2) \rightarrow \uHFKI(K),
\end{split}
\]
where the grading shifts of $F$ and $G$ are given by $\mathbf{gr}_\delta (F_{(M,a)})$ and $\mathbf{gr}_\delta (F_{(-M,a)})$, respectively. Note that $F$ and $G$ become quasi-isomorphisms when localized by $U^{-1}$.

Now, from the definition of $\bar{\upsilon}$, we know that there exists an element $x\in \uHFKI(K)$ such that the following conditions are satisfied.
\begin{itemize}
    \item The $\delta$-grading of $x$ is $\bar{\upsilon}(K)-1$.
    \item For any $n\ge 0$, we have $U^n x\ne 0$.
    \item $U^m x = Qy$ for some $m\ge 0$ and $y\in \uHFKI(K)$.
\end{itemize}
Consider the element $F(x)\in \mathbb{F}_2[U,Q]/(Q^2)$. Since the localized map $U^{-1}F$ is chain homotopic to $U$ times the identity map and the homology class of $x$ generates the tower given by the image of $Q$ inside the ($U$-)localized homology, we know that $F(x)\ne 0$. Also, we have $U^i F(x)=QF(y)$, so $F(x)=QU^i$ for some $i\ge 0$. Hence the $\delta$-grading of $F(x)$ is at most $-1$, so we get 
\[
\bar{\upsilon}(K)\le -\mathbf{gr}_{\delta}(F_{(M,a)}).
\]
We now consider the element $1\in \uCFKI(\text{unknot})=\mathbb{F}_2[U,Q]/(Q^2)$. Then $G(1)$ is a cycle in $\uCFKI(K)$. Suppose that there exists some $m\ge 0$ such that the homology class of $U^m G(1)$ is a multiple of $Q$. Then there exist a cycle $d\in \uCFK(S^3,K)$ and chains $e,f\in \uCFK(S^3,K)$ such that $$G(U^m )=U^m G(1)=Qd+\partial (e+Qf)=\partial e+Q(d+e+\iota_K(e)+\partial f).$$ Since $G(U^m )$ is of the form $F_{(-M,a)}(U^m )+Qc$ for some $c\in \uCFK(S^3,K)$, we must have $F_{(-M,a)}(U^m )=\partial e$, \ie the homology class $[F_{(-M,a)}(U^m )]\in \uHFK(K)$ vanishes. But this contradicts the fact that the localized map $U^{-1}F_{(-M,a)}$ is chain homotopic to identity. Hence we deduce that $U^m G(1)\notin \text{Im}(Q)$ for any $m\ge 0$, which implies that 
\[
\ubar{\upsilon}(K)\ge \mathbf{gr}_{\delta}(F_{(-M,a)}).
\]
Therefore we deduce that
\[
\bar{\upsilon}(K)-\ubar{\upsilon}(K)\le -(\mathbf{gr}_\delta (F_{(-M,a)})+\mathbf{gr}_\delta (F_{(M,a)}))=1.
\]
\end{proof}

We move on to prove an involutive bound on the nonorientable genus. For this we require the following lemmas.
\begin{lem}[{\cite[Proposition 6.6.2]{fan2019unoriented}}]
Let $S_i =(\Sigma,a_i)$, $i=1,2,3$, be disoriented link cobordisms on the same underlying unoriented surface $\Sigma$. Suppose that $S_1,S_2,S_3$ are related by a \emph{bypass move}, which is defined as in Figure \ref{fig:bypassrelation}. Then we have $F_{S_1}+ F_{S_2}+F_{S_3} \sim 0$.
\end{lem}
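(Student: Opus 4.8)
The plan is to reduce the claimed relation to a purely local identity and then to recognize that identity as the chain-level bypass exact triangle in link Floer homology. Since $S_1$, $S_2$, $S_3$ share the same underlying surface $\Sigma$ and differ only in their disorientation arcs, which in turn differ only inside a ball $B\subset S^3\times I$ where they realize the three cyclic configurations of Figure~\ref{fig:bypassrelation}, I would first split each cobordism as $S_i=S_{\mathrm{out}}\cup S_{\mathrm{in}}^{(i)}$ along $\partial B$. Functoriality of Fan's TQFT (\Cref{fansthm}) then gives $F_{S_i}\sim F_{S_{\mathrm{out}}}\circ F_{S_{\mathrm{in}}^{(i)}}$ with $F_{S_{\mathrm{out}}}$ independent of $i$, so it suffices to prove
\[
F_{S_{\mathrm{in}}^{(1)}}+F_{S_{\mathrm{in}}^{(2)}}+F_{S_{\mathrm{in}}^{(3)}}\sim 0
\]
as $\mathbb{F}_2[U]$-module maps between the unoriented link Floer complexes of the two tangles cut out by $\partial B$ (or, after gluing on a fixed disoriented cobordism, between honest links in $S^3$).

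To establish the local identity I would work on a Heegaard multi-diagram $(\Sigma,\alpha,\beta_1,\beta_2,\beta_3)$ adapted to the bypass region, chosen so that $\beta_j$ and $\beta_k$ differ by the small Hamiltonian isotopy or handleslide interpolating between the $j$-th and $k$-th arc configuration, and so that each local map $F_{S_{\mathrm{in}}^{(i)}}$ is represented on the chain level by a holomorphic triangle map $x\mapsto \mathbf{f}(x\otimes\Theta_{jk})$, where $\Theta_{jk}$ is the top-graded generator of the Floer homology of the relevant pair of $\beta$-curves. The identity $\mathbf{f}_{12}+\mathbf{f}_{23}+\mathbf{f}_{31}\sim 0$ is then the chain-level exact-triangle relation: it follows by counting the ends of the one-dimensional moduli spaces of holomorphic quadrilaterals for $(\Sigma,\beta_1,\beta_2,\beta_3,\beta_1)$, whose broken configurations contribute exactly the three triangle maps together with a null-homotopy — the standard degeneration argument of Ozsv\'ath and Szab\'o, transposed to the multi-pointed setting. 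Since the canonical conjugation map, the basepoints $\p,\q$, the arc system $\bfl$, and the $\delta$-grading are all unaffected by this local move, the $U=V$ truncation preserves the identity, and combining with the previous paragraph yields $F_{S_1}+F_{S_2}+F_{S_3}\sim F_{S_{\mathrm{out}}}\circ(F_{S_{\mathrm{in}}^{(1)}}+F_{S_{\mathrm{in}}^{(2)}}+F_{S_{\mathrm{in}}^{(3)}})\sim 0$.

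The hard part will be this local identity: one must check that the three bypass configurations genuinely correspond to three $\beta$-curves sitting in the handleslide position needed to run the polygon-counting argument, and that the top generators $\Theta_{jk}$ can be chosen coherently so that no stray terms survive when counting quadrilateral ends. A subtler but real point of bookkeeping is verifying that the $\delta$-grading shifts of $F_{S_1}$, $F_{S_2}$, $F_{S_3}$ all coincide — so that their sum is a homogeneous map and the asserted homotopy is meaningful — and that the arcs $a_1,a_2,a_3$ really do agree outside $B$, which is what legitimizes the splitting in the first step.
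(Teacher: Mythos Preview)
The paper does not prove this lemma at all: it is stated with the citation \cite[Proposition 6.6.2]{fan2019unoriented} and used as a black box. So there is no ``paper's own proof'' to compare against; you have written a plausible outline of what Fan's argument looks like, but the authors of the present paper simply quote the result.

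That said, your sketch is broadly in the right spirit for how bypass relations are established in link Floer theory (localize to a ball, factor through a common outside map, and prove a local identity via polygon counts), and you correctly flag the genuine technical content --- coherent choice of top generators, matching $\delta$-grading shifts, and the handleslide configuration of the $\beta$-curves. One point to be careful about: the three bypass configurations in Fan's setting are not related to one another by a single handleslide in the naive sense, and the local maps are not all ordinary triangle maps on a single Heegaard triple-diagram; Fan's argument (and the analogous oriented argument of Zemke) passes through quasi-stabilization and basepoint-moving maps rather than a direct $(\alpha,\beta_1,\beta_2,\beta_3)$ quadrilateral count. Your second paragraph oversimplifies this, though the degeneration-of-quadrilaterals philosophy is morally correct. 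If you actually need to supply a proof rather than a citation, you should consult Fan's Section 6.6 (or the oriented precursor in \cite{zemke2019link}) for the precise local model.
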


\begin{figure}[htbp]
    \centering
    \includegraphics[width=0.7\textwidth]{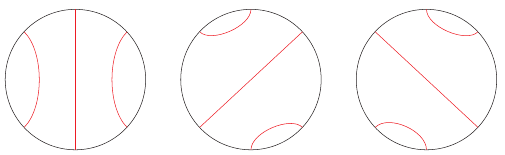}
    \caption{We say that three disoriented cobordisms are related by a \emph{bypass move} if they differ only inside a disk as drawn. See \cite[Definition 7.1.5]{fan2019unoriented} for more details.}
\label{fig:bypassrelation}
\end{figure}

\begin{lem}
\label{closedcomp}
Let $S=(S,a)$ be a disoriented link cobordism from a disoriented link $L$ to the empty link. Suppose that the one manifold $a$ contains a closed component $c$ which is \emph{two-sided}, \ie admits an orientable neighborhood in $S$. Then $U\cdot F_S \sim 0$ and $U\cdot F_{\bar{S}}\sim 0$, where $\bar{S}$ is given by flipping $S$ upside-down.
\end{lem}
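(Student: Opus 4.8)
The plan is to isolate the closed two-sided component $c\subset a$ and factor the cobordism map through a trivial cobordism whose associated map is multiplication by $U$ times a map that is forced to be zero. First I would observe that since $c$ is two-sided, a small annular neighborhood $A\subset S$ of $c$ is orientable; choosing a slightly smaller concentric closed curve $c'\subset A$ disjoint from $c$, we can view $S$ as the composition of two disoriented cobordisms glued along the closed curve $c'$ pushed off to a level: the bottom piece $S_{\mathrm{low}}$ from $L$ to (unknot) $\sqcup$ ..., no --- more precisely, I would slice $S$ along a middle level so that $c$ lies in a product sub-cobordism $(c\times I, c\times I)$ sitting between two copies of the same link diagram, with the dividing curve wrapping the product annulus once in the ``longitudinal'' direction.

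The key step is then to compute the cobordism map of this small product piece $\Sigma_0=(c\times I, c\subset c\times I)$, a disoriented cobordism from a split unknot component $U_0$ (with one $\p$ and one $\q$ point) to itself. I would decompose this annular piece as a pair of nonorientable band moves (a ``finger move'' creating a small kink and then undoing it), or equivalently recognize it directly: the closed curve $c$ on the product annulus records that this is the composition of two opposite nonorientable saddles on the unknot, which by \Cref{unknottedmobius} have associated maps $\mathbf{id}$ and $U\cdot\mathbf{id}$. Hence the composite map on this piece is $U\cdot\mathbf{id}$, but it can also be realized in a second way --- with the two saddles in the other order, or by an isotopy sliding the dividing curve off --- giving a map homotopic to $U$ times the \emph{death-then-birth} map on the unknot component, which is $U$ times a map factoring through the zero complex $\uCFK(\text{split unknot component removed})$... more carefully: the relevant statement is that $U$ times the identity on this piece is also homotopic, via the bypass relation from \Cref{fig:bypassrelation} applied inside the annulus, to a map that kills $U$, forcing $U\cdot F_{\Sigma_0}\sim 0$. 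Then by functoriality (\Cref{fansthm}), $U\cdot F_S\sim F_{S_{\mathrm{top}}}\circ (U\cdot F_{\Sigma_0})\circ F_{S_{\mathrm{bot}}}\sim 0$, and the same argument run upside down gives $U\cdot F_{\bar S}\sim 0$.

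The cleanest route actually uses the bypass relation directly: inside an orientable disk neighborhood of $c$, perform a bypass move so that $S=(S,a)$ together with two other cobordisms $S_2,S_3$ (differing from $S$ only inside that disk) satisfies $F_S+F_{S_2}+F_{S_3}\sim 0$; I would choose the bypass so that in $S_2$ and $S_3$ the curve $c$ is no longer closed but instead connected to a nearby $\p$-$\q$ pair, or is split off as a contractible circle --- in either case the $\delta$-grading shift formula \cite[Proposition 7.2.5]{fan2019unoriented} and the computations of \Cref{unknottedmobius} force $F_{S_2}\sim F_{S_3}$ up to a unit, while the presence of the essential closed two-sided curve forces the map to vanish after multiplication by $U$ (because localizing inverts $U$ and on the localized complex $U^{-1}\uHFL$ the closed two-sided component behaves like a disconnecting sphere, killing the localized map). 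Packaging this: after inverting $U$ the cobordism map is zero, so $U^N F_S\sim 0$ for some $N$; the grading-shift bookkeeping together with \Cref{unknottedmobius} then pins down $N=1$.

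The hard part will be making the ``local model'' argument for the annular piece rigorous --- precisely identifying which disoriented cobordism the two-sided closed curve $c$ contributes and verifying the claimed chain homotopy, since Fan's TQFT is only functorial up to (possibly non-unique) homotopy and the grading-shift formula must be invoked carefully to rule out the map being a nonzero multiple of $U^0$. Once the annular piece is handled, gluing via \Cref{fansthm} and flipping upside down are routine.
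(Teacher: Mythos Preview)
Your proposal never converges on a working argument, and the core idea the paper uses is absent. You try to isolate $c$ by taking an annular neighborhood $A\subset S$ and then viewing some piece of the cobordism as a product $c\times I$; but $c$ is a closed curve sitting inside the surface $S\subset B^4$, not at a fixed time level, so there is no canonical way to realize a neighborhood of $c$ as a sub-cobordism between two links by slicing horizontally. Your subsequent attempts (bypass relation, localizing by $U^{-1}$ and invoking a ``disconnecting sphere'' heuristic) remain at the level of analogy and you yourself flag the local-model step as the hard part without resolving it.

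The paper's proof uses a different and much simpler geometric input: since $c$ is a circle in the interior of $B^4$, it bounds a smoothly embedded disk $D\subset B^4$ (with $\partial D=c$), transverse to $S$ in its interior. Taking a small $4$-ball neighborhood $N_D$ of $D$ splits $S$ into $S_1=S\setminus N_D$ and $S_2=S\cap N_D$ with $F_S\sim F_{S_2}\circ F_{S_1}$. The point is that $S_2$ is \emph{orientable}: near $\partial N_D$ it is an annulus (here the two-sidedness of $c$ is used), and the transverse intersections with $\mathrm{int}(D)$ contribute only finitely many small disks. Now $S_2$ is an orientable decorated cobordism carrying a closed component of the dividing set, so by \Cref{oriented-equivalent} and \cite[Lemma~5.2]{juhasz2020concordance} one has $U\cdot F_{S_2}\sim 0$, whence $U\cdot F_S\sim 0$ by functoriality; the same argument applies to $\bar S$. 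The step you are missing is precisely this passage through an ambient disk $D\subset B^4$ to reduce to the orientable case.
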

\begin{proof}
Choose a disk $D$, smoothly embedded in the interior of $B^4$, which intersects $S$ transversely in its interior and satisfies $\partial D=c$. Choose a small neighborhood $N_D$ of $D$. By perturbing $a$, we may assume that $S_1 = (S\backslash N_D,a\backslash N_D)$ and $S_2 = (S\cap N_D,a\cap N_D)$ are disoriented cobordisms and $c\subset N_D$. Note that, since $S=S_2 \circ S_1$, we have $F_S = F_{S_2} \circ F_{S_1}$.

Since $c$ is two-sided and the interior of $D$ intersects $S$ in finitely many points, $S\cap N_D$ is orientable. Thus, once again replacing the disoriented cobordism structure on $S_1$ and $S_2$ with the structure of compatible decorated link cobordisms, by Lemma 5.2 of \cite{juhasz2020concordance} and Remark \ref{oriented-equivalent}, we deduce that $U\cdot F_{S_2}\sim 0$. Therefore $U\cdot F_S = (U\cdot F_{S_2})\circ F_{S_1} \sim 0$. The proof for $\bar{S}$ is the same.
\end{proof}

Now, we can prove that the disoriented cobordism maps induced by surfaces of higher first Betti number homotopy-commute with $\tau_K$ up to multiplication by $U$.
\begin{lem}
\label{highergenuscomm}
Let $\mathcal{S}=(S,a)$ be a disoriented cobordism from a disoriented knot $\mathcal{K}=(K,\p,\q,\bfl)$ to the empty link, where $\p$ and $\q$ consist of a single point. Suppose that $S$ is a smoothly embedded nonorientable surface and $a$ is a simple arc. Then, for any choice of orientation on $K$, we have $U\cdot (F_{(S,a)} \circ \tau_K) \sim U \cdot F_{(S,-a)}$, where $-a$ denotes $a$ endowed with the reverse orientation. Also, if $\bar{S}$ denotes the cobordism from the empty link to $K$ given by flipping $S$ upside-down, then we have $U\cdot (\tau_K \circ F_{(\bar{S},a)}) \sim U\cdot F_{(\bar{S},-a)}$.
\end{lem}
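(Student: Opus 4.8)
The plan is to reduce the statement to the M\"obius band case, which is already settled by \Cref{Mobiuscomm}, by isolating a single crosscap inside the nonorientable surface $S$ and absorbing the half twist into it; the price of doing so is a modification of the dividing set, and \Cref{closedcomp} is exactly the tool that shows this price costs only a factor of $U$.

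Concretely, I would first use that $\tau_K$ is realized as the cobordism map of the half twist cobordism $(K\times I,h)$ depicted in Figure \ref{fig:halftwistcobordism}. By functoriality of Fan's unoriented link Floer TQFT (\Cref{fansthm}), $F_{(S,a)}\circ\tau_K$ is then chain homotopic to $F_{(S,a')}$, where $(S,a')$ has the same underlying surface $S$ as $(S,a)$ but a dividing set $a'$ obtained from $a$ by modifying it within a collar of $\partial S=K$ according to the dividing set $h$. So the task becomes showing $U\cdot F_{(S,a')}\sim U\cdot F_{(S,-a)}$; here $(S,a')$ and $(S,-a)$ are disoriented cobordisms on the \emph{same} surface differing only near the boundary, and when $b_1(S)=1$ (so $S$ is a M\"obius band) this is precisely \Cref{Mobiuscomm}, with no factor of $U$ needed. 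In general, since $S$ is nonorientable, I would isotope $S$ in $B^4$ so that a collar of $\partial S$ contains an embedded crosscap, i.e. a simple closed curve $\gamma\subset S$ with M\"obius band neighborhood $N(\gamma)$, whose boundary $c=\partial N(\gamma)$ is two-sided in $S$. Sliding $a'$ across $N(\gamma)$ turns it into a dividing set isotopic rel endpoints to $-a$, the local model for this being exactly \Cref{Mobiuscomm} applied inside $N(\gamma)$. This crosscap slide is realized by a sequence of bypass moves (Figure \ref{fig:bypassrelation}); in each bypass triple the third disoriented cobordism carries a closed dividing component isotopic to $c$, hence two-sided, so by \Cref{closedcomp} $U$ times its cobordism map is null-homotopic. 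The bypass relation $F_{S_1}+F_{S_2}+F_{S_3}\sim 0$ then forces $U\cdot F_{(S,a')}\sim U\cdot F_{(S,-a)}$, which gives $U\cdot(F_{(S,a)}\circ\tau_K)\sim U\cdot F_{(S,-a)}$. The statement for $\bar S$ follows by flipping all cobordisms upside down, using that \Cref{Mobiuscomm} and \Cref{closedcomp} are both recorded for the upside-down cobordisms as well.

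The hard part will be the geometric bookkeeping in the middle step: checking that the modification of the dividing set near $\partial S$ coming from $h$ is genuinely undone, up to isotopy, by one crosscap slide across $N(\gamma)$, and that this slide decomposes into honest bypass moves whose third terms are of exactly the form to which \Cref{closedcomp} applies, in particular that the newly created closed dividing curve is the two-sided curve $c$ rather than a one-sided curve. I would also have to track the $\p$- and $\q$-basepoints as the arc passes over $\partial N(\gamma)$, and confirm compatibility with the $U^{-1}$-localization already used in the proof of \Cref{Mobiuscomm}. If pushing the crosscap into a collar of $\partial S$ turns out to be unwieldy, an alternative is to induct on the nonorientable genus of $S$: write $S$ as a M\"obius band capping off a small unknot composed with a nonorientable cobordism $S_1$ of one smaller nonorientable genus, apply \Cref{Mobiuscomm} to the M\"obius band, invoke a suitably generalized inductive hypothesis for $S_1$, and collect the accumulated factors of $U$, again absorbing error terms via \Cref{closedcomp}.
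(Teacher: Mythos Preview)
Your proposal correctly identifies the main tools (Fan's bypass relation and \Cref{closedcomp}) and the right first step (realize $\tau_K$ as the half-twist cobordism, reducing the problem to comparing two dividing arcs on the same underlying surface $S$). But the geometric route you outline diverges from the paper's in a way that creates a genuine gap.

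The paper does not push the half-twist through a crosscap. Instead, for $b_1(S)$ odd it writes $S$ abstractly as a M\"obius band connect-summed with a closed orientable surface and places $a$ in a standard position (\Cref{fig:highergenus}); after composing with the half-twist, the new arc $b$ and the reversed arc $-a$ agree near the crosscap and differ only inside an \emph{orientable} subsurface $S_o\subset S$. The interpolation from $-a=a_0$ to $b=a_n$ is then carried out entirely in $S_o$ via a sequence of arcs (\Cref{fig:arcsinS0}), adapting \cite[Lemma~5.3 and Proposition~5.5]{juhasz2020concordance}. Because every bypass disk sits in $S_o$, any closed dividing curve that arises lies in $S_o$ and is automatically two-sided, so \Cref{closedcomp} applies without further argument. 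The even-$b_1(S)$ case is then reduced to the odd case by connect-summing with a standard $\mathbb{RP}^2$ using \Cref{unknottedmobius}; your sketch does not address this parity issue.

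Your route instead places the bypass moves near the crosscap, and this is where it breaks. First, a collar of $\partial S$ is an annulus and so cannot literally contain a one-sided curve; what you presumably intend is the same connect-sum picture the paper uses. Second, and this is the real gap, you assert that the closed component in each bypass triple is the two-sided curve $c=\partial N(\gamma)$, but a bypass performed across the crosscap can just as well close up into a curve passing through the crosscap once, hence one-sided, and \Cref{closedcomp} says nothing in that case. Confining all bypass moves to the orientable subsurface $S_o$, as the paper does, is precisely what makes this a non-issue. Your fallback induction would also require a stronger inductive statement (for cobordisms between two nonempty links rather than to the empty link), so it does not obviously simplify matters.
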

\begin{proof}
We will only provide a proof for $(S,a)$; the proof for $(\bar{S},a)$ is the same. 

We first assume that the first Betti number of $S$ is odd. Then we can consider $S$, as an abstract surface with boundary, as a connected sum of a M\"{o}bius band and a closed orientable surface, and $a$ as an arc on $S$ which is shown on the left in \Cref{fig:highergenus}. Choose an orientation on $K$ and denote by $C$ the disoriented cobordism from $\mathcal{K}$ to $(K,\q,\p,\bfl)$ given by a half-twist along the given orientation on $K$; note that the cobordism map induced by $C$ is $\tau_K$. Then $\mathcal{S} \circ C$ is isotopic to the disoriented cobordism $\mathcal{S}^{\prime}=(S,b)$, where $b$ is the arc shown on the right in \Cref{fig:highergenus}. 

\begin{figure}[htbp]
    \centering
    \includegraphics[width=0.9\textwidth]{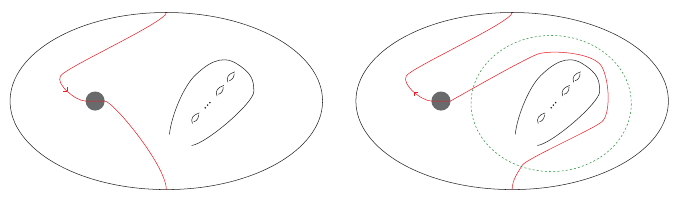}
    \caption{Left, the cobordism $\mathcal{S}$. Right, the cobordism $\mathcal{S}^{\prime}$. The region enclosed by the green dashed curve is $S_o$. The grey circles denote crosscaps, \ie M\"{o}bius band attached along its boundary.}
\label{fig:highergenus}
\end{figure}

The oriented arcs $-a$ and $b$ differ in an oriented subsurface $S_o$ of $S$. Denote the first Betti number of $S_o$ by $n$. Then one can consider a sequence of properly embedded (oriented) arcs $-a=a_0,a_1,\cdots,a_n=b$, as drawn in \Cref{fig:arcsinS0}, such that the following conditions are satisfied:
\begin{itemize}
    \item $a_0,\cdots,a_n$ differ only in $S_o$,
    \item For each $i$, $S_o \backslash a_i$ consists of a surface $A_i$ of genus $i$ and a surface $B_i$ of genus $n-i$, where $A_i$ lies on the right of $a_i$,
    \item $A_0 \subset A_1 \subset \cdots \subset A_n$.
\end{itemize}
Note that, since $a_i$ is oriented and divides $S_o$ into two components, we have a well-defined notion of a component lying on the left side and the right side of $a_i$. Since we have \Cref{closedcomp} and a bypass relation for Fan's TQFT, we can directly apply the arguments used in the proof of \cite[Lemma 5.3 and Proposition 5.5]{juhasz2020concordance} to show that $U \cdot F_{(S,a_{i-1})} \sim U \cdot F_{(S,a_{i})}$ for all $i$. Hence we get $U\cdot (F_{\mathcal{S}} \circ \tau_K) \sim U \cdot F_{(S,-a)}$.

We now consider the remaining case when the first Betti number of $S$ is even. Then, by \Cref{unknottedmobius}, we see that taking a connected sum of $\mathcal{S}$ with a unknotted $\mathbb{RP}^2$ with normal Euler number 2 gives a disoriented cobordism of odd first Betti number which has the same induced cobordism map as $\mathcal{S}$. This reduces the problem to the case of odd first Betti number, so we get $U\cdot (F_{\mathcal{S}} \circ \tau_K) \sim U \cdot F_{(S,-a)}$, as desired.
\end{proof}

\begin{figure}[htbp]
    \centering
    \includegraphics[width=0.5\textwidth]{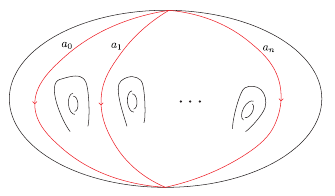}
    \caption{The arcs $a_0,a_1,\cdots,a_n$ on the orientable surface $S_o$.}
\label{fig:arcsinS0}
\end{figure}

We are now able to prove \Cref{highergenusthm}. Our proof is similar to that of Equation 18 in~\cite[Theorem 1.7]{juhasz2020concordance}.
\begin{proof}[Proof of \Cref{highergenusthm}]
Let $S$ be a smoothly embedded nonorientable surface of first Betti number $g$ in $B^4$ which bounds $K$. Suppose that $S$ consists of $M$ local maxima, $b$ (possibly nonorientable) saddles, and $m+1$ local minima. Then, by applying \cite[Proposition 5.5]{gong2021non} as in the proof of \Cref{mobiusthm}, there exists an orientable disoriented cobordism $T$ of first Betti number $3$, given as shown in Figure \ref{fig:cobordismT}, and a properly embedded oriented arc $a\subset S$, such that $S\backslash a$ is orientable and 
\[
U^M \cdot (F_T \circ F_{(-S,a)} \circ F_{(S,a)}) \sim U^{b-m+1} \cdot \mathbf{id}.
\]
Following the proof of \Cref{mobiusthm} tells us that the localized maps $U^{-1}F_{(-S,a)}$ and $U^{-1}F_{(S,a)}$ are chain homotopic to the identity map, and if we denote the $\delta$-grading shifts of $F_{(-S,a)}$ and $F_{(S,a)}$ by $\mathbf{gr}_\delta (F_{(-S,a)})$ and $\mathbf{gr}_\delta (F_{(S,a)})$, then we have 
\[
\mathbf{gr}_\delta (F_{(-S,a)})+\mathbf{gr}_\delta (F_{(S,a)})=M+m-b=g.
\]
However, since $F_{(S,a)}$ and $F_{(-S,a)}$ may not homotopy-commute with $\iota_K$, they do not induce maps between $\uHFKI(K)$ and $\uHFKI(\text{unknot})=\mathbb{F}_2[U,Q]/(Q^2)$. Fortunately, from \Cref{conjugationcomm} and \Cref{highergenuscomm}, we see that 
\[
\begin{split}
    U\cdot (\iota_K \circ F_{(S,a)}) &\sim U\cdot F_{(S,a)}, \\
    U\cdot (\iota_K \circ F_{(-S,a)}) &\sim U\cdot F_{(-S,a)},
\end{split}
\]
so $U\cdot F_{(S,a)}$ and $U\cdot F_{(-S,a)}$ induce maps between involutive homology
\[
\begin{split}
F\colon & \uHFKI(K)\rightarrow \mathbb{F}_2[U,Q]/(Q^2), \\
G\colon & \mathbb{F}_2[U,Q]/(Q^2) \rightarrow \uHFKI(K),
\end{split}
\]
where the grading shifts of $F$ and $G$ are denoted by $\mathbf{gr}_\delta (F_{(S,a)})-1$ and $\mathbf{gr}_\delta (F_{(-S,a)})-1$, respectively.

Now, following the proof of \Cref{mobiusthm} gives
\[
\begin{split}
    \bar{\upsilon}(K) &\le 1-\mathbf{gr}_{\delta}(F_{(S,a)}), \\
    \ubar{\upsilon}(K) &\ge \mathbf{gr}_{\delta}(F_{(-S,a)}) -1.
\end{split}
\]
Furthermore, using the same argument for $\upsilon(K)$ implies
\[
\mathbf{gr}_{\delta}(F_{(-S,a)}) \le \upsilon(K) \le -\mathbf{gr}_{\delta}(F_{(S,a)}).
\]
The theorem then follows directly from the above inequalities.
\end{proof}

We now focus on the case of $L$-space knots. The knot Floer homology of $L$-space knots is very simple: if $K$ is an $L$-space knot, then $CFK(S^3,K)$ is a staircase complex which is completely determined by the Alexander polynomial of $K$, as shown in \cite[Theorem 1.2]{ozsvath2005knot}. Recall that the Alexander polynomial of $K$ always takes the form 
\[
\Delta_K(t)=(-1)^m + \sum_{i=1}^{m} (-1)^{m-i}\left(t^{n_i}+t^{-n_i}\right)
\]
for some sequence of positive integers $0<n_1<\cdots <n_m =g(K)$. The chain complex $CFK(S^3,K)$ is chain homotopy equivalent to a free chain complex generated by elements $x_0,x^{1}_1,x^{2}_1,\cdots,x^{1}_m,x^{2}_m$. Here, $x_0$ lies on the bigrading $(0,0)$ and either:

\begin{enumerate}

\item the bigradings of $x^{1}_{m-i}$ and $x^{1}_{m-i-1}$ differ by $(n_{m-i}-n_{m-i-1},0)$ if $i$ is even and $(0,n_{m-i}-n_{m-i-1})$ if $i$ is odd or;
\item the bigradings of $x^{1}_{m-i}$ and $x^{1}_{m-i-1}$ differ by $(n_{m-i}-n_{m-i-1},0)$ if $i$ is odd and $(0,n_{m-i}-n_{m-i-1})$ if $i$ is even.

\end{enumerate} Moreover, if $x^{1}_s$ lies on the bigrading $(i,j)$, then $x^{2}_s$ lies on the bigrading $(j,i)$. The differential on the staircase complex is given as in \cite[Figure 12]{hendricks2017involutive}.

The action of $\iota_K$ on such a staircase complex is very easy to describe: we have $\iota_K(x_0)=x_0$, $\iota_K(x^{1}_s)=x^{2}_s$, and $\iota_K(x^{2}_s)=x^{1}_s$. In particular, we have $\iota_K^2 \sim \mathbf{id}$. See \cite[Section 7]{hendricks2017involutive} for more details.

\begin{lem}
\label{Lspaceknots}
Let $K$ be an $L$-space knot. If the Alexander polynomial of $K$ is of the form $\Delta_K(t)= -1+\sum_{i=1}^{m} (-1)^{m-i}(t^{n_i}+t^{-n_i})$ for some sequence of positive integers $0<n_1<\cdots <n_m$, then 
\begin{align*}
    \bar{\upsilon}(K)=\upsilon(K) \ge \ubar{\upsilon}(K)+n_1.
\end{align*}
\end{lem}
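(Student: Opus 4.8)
The plan is to compute $\uHFKI(K)$ directly from the staircase model of $\uCFK(S^3,K)$ and then read off $\bar\upsilon$, $\upsilon$, and $\ubar{\upsilon}$. Since the constant term of $\Delta_K(t)$ equals $(-1)^m=-1$, the integer $m$ is odd; hence in the staircase the central generator $x_0$ is a non-cycle, the generators $x_s^1,x_s^2$ with $s$ odd are cycles, and the differential satisfies $\partial_K x_0=U^{n_1}(x_1^1+x_1^2)$ (the exponent being the length of the innermost step), while $\iota_K$ fixes $x_0$ and swaps $x_s^1\leftrightarrow x_s^2$. First I would change basis, replacing each $x_s^2$ by $y_s:=x_s^1+x_s^2$: then $1+\iota_K$ becomes the map sending $x_0\mapsto0$, $x_s^1\mapsto y_s$, $y_s\mapsto0$, and $\uCFK(S^3,K)$ splits as a direct sum of complexes $X\oplus\bigl(\mathbb{F}_2[U]\langle x_0\rangle\oplus Y\bigr)$, where $X$ is the ``half-staircase'' on the $x_s^1$, the complex $Y$ is the half-staircase on the $y_s$, and $\partial_K x_0=U^{n_1}y_1$ glues $x_0$ to $Y$; the assignment $x_s^1\mapsto y_s$ gives an isomorphism $X\cong Y$. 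I record two facts about $\uHFK(S^3,K)=H_*(X)\oplus H_*\bigl(\mathbb{F}_2[U]\langle x_0\rangle\oplus Y\bigr)$: its free $\mathbb{F}_2[U]$-tower is carried by $H_*(X)$ and is generated by $[x_1^1]$, which therefore sits in $\delta$-grading $\upsilon(K)$ (the usual description of $\upsilon$ for $L$-space knots); and $[y_1]=[x_1^1+x_1^2]$ is $U$-torsion of order exactly $n_1$. The latter holds because $U^{n_1}y_1=\partial_K x_0$ is a boundary, while $U^{n_1-1}y_1$ is not: in any purported bounding chain the coefficients of the outermost staircase generators are forced to vanish, and peeling inward one is left only with a multiple of $x_0$, which cannot produce $U^{n_1-1}y_1$.

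Next I would analyze $\uCFKI(K)=\operatorname{Cone}(1+\iota_K)$, with differential $\partial(a+Qb)=\partial_K a+Q\bigl((1+\iota_K)a+\partial_K b\bigr)$. In the new basis this cone contains the subcomplex $\mathcal C:=Y\oplus Q\cdot\uCFK(S^3,K)$, with quotient complex $X\oplus\mathbb{F}_2[U]\langle x_0\rangle$; let $\beta$ be the connecting homomorphism of the resulting long exact sequence. Computing on lifts, $\beta$ sends $[x_0]\mapsto U^{n_1}\cdot\mathfrak t_Y$ (with $\mathfrak t_Y$ the free generator of $H_*(Y)\cong H_*(X)$) and $[x_1^1]\mapsto Q[y_1]$, and more generally it carries all of $H_*(X)$ into $Q\cdot\bigl(\text{torsion of }\uHFK(S^3,K)\bigr)$. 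Since $\mathfrak t_Y$ is a free generator, $\beta$ is injective on $\mathbb{F}_2[U]\langle x_0\rangle$, so that summand contributes nothing to $\uHFKI(K)$; and since $[y_1]$ has $U$-order exactly $n_1$, we get $\beta(U^j x_1^1)=Q\,U^j[y_1]=0$ precisely for $j\ge n_1$. Assembling this: the classes in $\uHFKI(K)$ that are nonzero under every power of $U$ come either from the $\beta$-cokernel inside $Q\cdot\uHFK(S^3,K)$ --- whose part not killed by $\beta$ is $Q$ times the free tower of $\uHFK(S^3,K)$, so its top grading is $\upsilon(K)-1$ --- or from the $\beta$-kernel inside $H_*(X)$, which is $U^{n_1}\mathbb{F}_2[U]\langle[x_1^1]\rangle$ and is realized by the cycle $\zeta:=U^{n_1}x_1^1+Q x_0$ (a cycle since $(1+\iota_K)(U^{n_1}x_1^1)=U^{n_1}y_1=\partial_K x_0$) at top grading $\upsilon(K)-n_1$.

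Finally I would read off the invariants. A class witnessing $\bar\upsilon(K)$ --- nonzero under all powers of $U$, and a multiple of $Q$ after some power of $U$ --- must map in $U^{-1}\uHFKI(K)\cong\mathbb{F}_2[U^{\pm1},Q]/(Q^2)$ into the $Q$-part, so it lies, up to $U$-torsion, in the $\beta$-cokernel inside $Q\cdot\uHFK(S^3,K)$ and hence in grading at most $\upsilon(K)-1$; the class of $Q$ times the tower generator attains this, so $\bar\upsilon(K)=(\upsilon(K)-1)+1=\upsilon(K)$ (the opposite inequality $\bar\upsilon\ge\upsilon$ being general). A class witnessing $\ubar{\upsilon}(K)$ --- no power of $U$ lying in $\operatorname{Im}(Q)$ --- must have nonzero non-$Q$ part in $U^{-1}\uHFKI(K)$, hence lies, up to $U$-torsion, in the $\beta$-kernel $U^{n_1}\mathbb{F}_2[U]\langle[x_1^1]\rangle$, of top grading $\upsilon(K)-n_1$, realized by $[\zeta]$; so $\ubar{\upsilon}(K)=\upsilon(K)-n_1$. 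Therefore $\bar\upsilon(K)=\upsilon(K)=\ubar{\upsilon}(K)+n_1$, which is the assertion. The step I expect to be the main obstacle is making the description of $\uHFKI(K)$ airtight --- in particular the order-exactly-$n_1$ statement for $[x_1^1+x_1^2]$, and ruling out unexpected high-grading survivors such as ``free generator plus torsion'' classes in $\ker\beta$ --- and this rests entirely on the triangular (telescoping) form of the staircase differential together with the fact that a torsion cycle of $X$ cannot represent its free generator.
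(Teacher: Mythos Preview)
Your approach is correct and genuinely different from the paper's. The paper recognises $(\uCFK(S^3,K),\iota_K)$ as a \emph{symmetric graded root} and invokes the computations of \cite{dai2019involutive} (specifically, that $\bar\upsilon=\upsilon$ equals the grading of the highest leaf while $\ubar\upsilon$ equals the grading of the highest $\iota_K$-invariant node), after which identifying the relevant node with the cycle $U^{n_1}x_1^1+Qx_0$ is immediate from $\partial_K x_0=U^{n_1}(x_1^1+x_1^2)$. You instead diagonalise $1+\iota_K$ via the basis change $x_s^2\mapsto y_s=x_s^1+x_s^2$, split $\uCFK$ as $X\oplus(\langle x_0\rangle\oplus Y)$, and analyse $\uCFKI$ through the long exact sequence of the pair $(\mathcal C,\uCFKI)$; this is more self-contained and even yields the sharper equality $\ubar\upsilon(K)=\upsilon(K)-n_1$. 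The trade-off is exactly the one you flag at the end: the inequality the lemma actually needs is $\ubar\upsilon\le\upsilon-n_1$, i.e.\ an \emph{upper} bound on the grading of any $\ubar\upsilon$-witness, and in your framework this amounts to checking that no combination ``$U^j[x_1^1]+\text{torsion}$'' with $j<n_1$ lies in $\ker((1+\iota_K)_*)$. This does follow from the staircase structure (the image of the $H_*(X)$-torsion under $(1+\iota_K)_*$ cannot hit $U^j[y_1]$ for $j<n_1$ by a grading/telescoping argument), but it is precisely the content that the graded-root machinery packages for free. So both proofs are valid; the paper's is shorter by outsourcing this step, while yours is more explicit and independent of \cite{dai2019involutive}.
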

\begin{proof}
The unoriented knot Floer chain complex of $L$-space knots can be written as a graded root, as described in \cite[Section 7]{gong2021non}. The involution $\iota_K$ then acts on the graded root as a reflection along the central axis, so we can consider $(\uCFK(S^3,K),\iota_K)$ as a symmetric graded root, which is defined in \cite[Definition 2.11]{dai2019involutive}. See Figure \ref{fig:symmetricgradedroot} for a pictorial description of a symmetric graded root induced by the action of $\iota_K$ on $\uCFK(S^3,K)$.

\begin{figure}[htbp]
    \centering
    \includegraphics[width=0.5\textwidth]{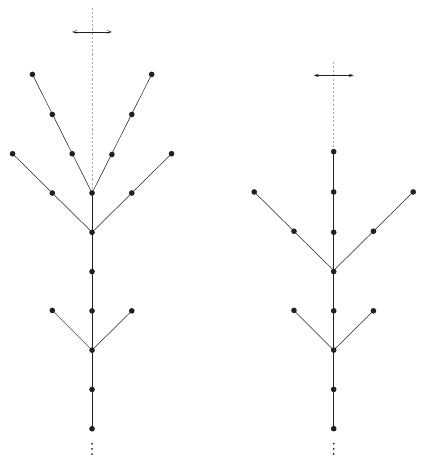}
    \caption{Left, the symmetric graded root representing $\uCFK(S^3 ,T_{6,7})$. Right, the symmetric graded root representing $\uCFK(S^3,T_{5,6})$. Note that the action of $\iota_K$ is given by reflection along the central axis, given by the gray dashed line. Note moreover that $\Delta_K(0)\neq -1$ for these two knots, so Lemma~\ref{Lspaceknots} does not apply.}
\label{fig:symmetricgradedroot}
\end{figure}

It is shown in \cite[Section 6]{dai2019involutive} that the invariants $d$ and $\overline{d}$ can be read off from a symmetric graded root. As $\upsilon$ and $\overline{\upsilon}$ are defined analogously to $f$ and $\overline{d}$, Dai and Manolescu's arguments likewise show that $\upsilon(K)$ can be read off from the given symmetric graded root as the grading of the highest leaf, and $\bar{\upsilon}(K)=\upsilon(K)$. Furthermore, $\ubar{\upsilon}(K)$ is the grading of the highest node which is left invariant by the action of $\iota_K$. Since the constant term of the Alexander polynomial of $K$ is $-1$, we have $\partial x_0 = U^{n_1}(x^1_1 + x^2_1)$ in $\uCFK(K)$, so $\ubar{\upsilon}(K)$ is represented by the cycle $U^{n_1} x^1_1 + Qx_0$ in $\uCFKI(K)$. Also, since $Qx_1$ is a cycle which satisfies 
\[
U^{n_1}\cdot Qx_1 = Q\cdot (U^{n_1} x^1_1 + Qx_0),
\]
we see that $U^{n_1}$ times the homology class of $Qx_1$ is contained in the image of $Q$ and cannot be annihilated by any powers of $U$. Therefore we get
\[
\bar{\upsilon}(K)-\ubar{\upsilon}(K) \ge (\mathbf{gr}(Qx_1)+1)-\mathbf{gr}(U^{n_1} x^1_1 + Qx_0) = n_1.\qedhere
\]
\end{proof}

We are finally ready to prove \Cref{thm:lowerbound}.

\begin{proof}[Proof of \Cref{thm:lowerbound}]
Since torus knots admit lens space surgeries \cite{moser}, and lens spaces are $L$-spaces \cite{ozsvath2005knot}, torus knots are $L$-space knots. The theorem follows directly from \Cref{mobiusthm}, \Cref{highergenusthm}, and \Cref{Lspaceknots}.
\end{proof}

\section{Asymptotically obstructing locally flat Möbius bands}\label{sec:asymp}
In this section we prove \Cref{asymptotic} by studying the linking forms of double branched covers of torus knots. It is well-known that double branched cover over $T_{p,q}$ is the Brieskorn sphere $\Sigma_2(S^3,T_{p,q})=\Sigma(2,p,q)$. Some number theory and a result of \cite{Yasuhara} allow us to show that the linking form obstructs certain torus knots $T_{p,q}$ --- subject to restrictions on $p$ and $q$ --- from bounding locally flat M\"obius bands. A subsequent argument shows that the restrictions on $p$ and $q$ are appropriately generic.

\subsection{The linking form of $\Sigma(2,p,q)$, where $p$ is even and $q$ is odd}
We begin by computing the linking forms of the Brieskorn sphere $\Sigma(2,p,q)$, for any even $p$ and odd $q$ such that $\gcd(p,q)=1$. Recall that a linking form of a rational homology 3-sphere $M$ is a $\mathbb{Q}/\mathbb{Z}$-valued bilinear form: 
\[
\lambda_M \colon H_1 (M;\mathbb{Z}) \times H_1 (M;\mathbb{Z}) \rightarrow \mathbb{Q}/\mathbb{Z}.
\]
To compute the linking form $\lambda_{\Sigma(2,p,q)}$ on $\Sigma(2,p,q)$, we start with a Goeritz matrix of $T_{p,q}$; recall that, in \cite[Section 3]{gordonlitherland}, it is shown that the linking form of the double cover of $S^3$ branched over a knot $K$ is presented by $-G^{-1} \text{mod } 1$, where $G$ is a Goeritz matrix of $K$. 

Consider the checkerboard surface $F$ for $T_{p,q}$, where $p$ is even, given in Figure \ref{fig:toruscheckerboard}. Note that $F$ is built from disks (0-handles) and bands (1-handles). Following \cite{gordonlitherland}, the Goeritz matrix $G$ of $T_{p,q}$ given by $F$ is the $\left(\frac{pq-2q+2}{2}\times\frac{pq-2q+2}{2}\right)$-matrix
\[
G=\begin{pmatrix}
q & -\mathbf{1}^h_q & 0 & 0 & \cdots & 0 & 0\\
-\mathbf{1}^v_q & A & -I_q & 0 & \cdots & 0 & 0\\
0 & -I_q & A & -I_q & \ddots & 0 & 0\\
0 & 0 & -I_q & A & \ddots & 0 & 0\\
\vdots & \vdots & \ddots & \ddots & \ddots & \vdots & \vdots \\
0 & 0 & 0 & 0 & \cdots & A & -I_q \\
0 & 0 & 0 & 0 & \cdots & -I_q & A
\end{pmatrix},
\]

where $I_q$ is the $q\times q$ identity matrix, $\mathbf{1}^h_q$ and $\mathbf{1}^v_q$ are the horizontal and vertical vectors of size $q$ whose entries are all $1$, respectively, and $A$ is the $(q\times q)$-matrix

\[
A=\begin{pmatrix}
0 & 1 & 0 & \cdots & 0 & 0 & 1 \\
1 & 0 & 1 & \cdots & 0 & 0 & 0 \\
0 & 1 & 0 & \cdots & 0 & 0 & 0 \\
\vdots  & \vdots  & \vdots & \ddots &\vdots & \vdots & \vdots\\
0 & 0 & 0 & \cdots & 0 & 1 & 0 \\
0 & 0 & 0 & \cdots & 1 & 0 & 1 \\
1 & 0 & 0 & \cdots & 0 & 1 & 0
\end{pmatrix}.
\]

\begin{figure}[htbp]
    \centering
    \includegraphics[width=0.7\textwidth]{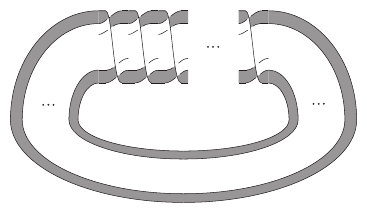}
    \caption{A checkerboard surface $F$ for the torus knot $T_{p,q}$, where $p$ is even.}
\label{fig:toruscheckerboard}
\end{figure}

Topologically, $G$ can be viewed as a matrix for the intersection form of the double cover of $B^4$ branched over $F$, where we consider $F$ as a properly embedded surface by pushing its interior into $B^4$. Denote this 4-manifold by $X$. Then $\partial X =\Sigma(2,p,q)$ and $X$ is simply connected. 
Instead of computing the full inverse of $G$, we will focus on two entries of $G^{-1}$, namely the $\left(1,\frac{pq-2q+2}{2}\right)$-entry and $\left(\frac{pq-2q+2}{2},\frac{pq-2q+2}{2}\right)$-entry. It will turn out that these two entries are enough to recover the linking form. Note that $\det(G)=\det(T_{p,q})=\vert \Delta_{T_{p,q}}(-1)\vert =q$ if $p$ is even and $q$ is odd. Since all entries of $G$ are integers, we thus see that all entries of $G^{-1}$ are integer multiples of $\frac{1}{q}$.

\begin{lem}
\label{entry1}
The $\left(1,\frac{pq-2q+2}{2}\right)$-entry of $G^{-1}$ is $\frac{1}{q}$.
\end{lem}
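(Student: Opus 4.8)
The plan is to compute the $\left(1,\frac{pq-2q+2}{2}\right)$-entry of $G^{-1}$ directly by solving the linear system $G\mathbf{x}=\mathbf{e}_{\frac{pq-2q+2}{2}}$, where $\mathbf{e}_{\frac{pq-2q+2}{2}}$ is the last standard basis vector; the desired entry is the first coordinate $x_1$ of the solution $\mathbf{x}$. Because of the tridiagonal block structure of $G$ (with blocks $q$, $-\mathbf{1}_q^h$, $-\mathbf{1}_q^v$, $A$, and $-I_q$), I would decompose $\mathbf{x}$ into one scalar component $x_1$ (corresponding to the first $1\times 1$ block) and then $\frac{p-2}{2}$ vector blocks $\mathbf{v}_1,\dots,\mathbf{v}_{(p-2)/2}$ of length $q$. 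Writing out the block rows of $G\mathbf{x}=\mathbf{e}$ gives the scalar equation $q\,x_1-\mathbf{1}_q^h\mathbf{v}_1=0$, the interior recursions $-\mathbf{1}_q^v x_1 + A\mathbf{v}_1 - \mathbf{v}_2 = 0$ and $-\mathbf{v}_{j-1}+A\mathbf{v}_j-\mathbf{v}_{j+1}=0$ for the middle indices, and the final equation $-\mathbf{v}_{(p-4)/2}+A\mathbf{v}_{(p-2)/2}=\mathbf{e}_q^{\,(q)}$ (the last length-$q$ basis vector).

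The key observation I would exploit is that $A$ is (up to sign conventions) the adjacency matrix of a $q$-cycle, so $A=C+C^{-1}$ where $C$ is the cyclic shift on $\mathbb{Z}^q$; equivalently, $A$ is a circulant matrix, diagonalized by the discrete Fourier transform with eigenvalues $2\cos(2\pi k/q)$. The real engine, though, is the guess that the solution is ``constant along the blocks'' in a suitable sense: since $T_{p,q}=T_{q,p}$ and the Goeritz matrix really is encoding $\Sigma(2,p,q)$, one expects the answer to be symmetric and simple. Concretely, I would try the ansatz $x_1 = \tfrac1q$ and $\mathbf{v}_j = c_j \mathbf{1}_q + (\text{correction supported near one coordinate})$, then verify that the scalar equation forces $\mathbf{1}_q^h \mathbf{v}_1 = 1$, propagate the recursion $\mathbf{v}_{j+1}=A\mathbf{v}_j-\mathbf{v}_{j-1}$ (with $\mathbf{v}_0 := x_1\mathbf{1}_q$), and check the terminal condition. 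Since $A\mathbf{1}_q = 2\cdot\mathbf{1}_q$, the all-ones part evolves by the Chebyshev-like recursion $c_{j+1}=2c_j-c_{j-1}$, i.e.\ $c_j$ is affine in $j$; pinning down the two constants from the two boundary conditions and checking consistency of the non-constant part against the single basis vector $\mathbf{e}_q^{(q)}$ on the right-hand side should yield $x_1=\tfrac1q$ cleanly.

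Alternatively—and this may be the cleaner route to write up—I would use Cramer's rule: the $\left(1,\frac{pq-2q+2}{2}\right)$-entry of $G^{-1}$ equals $(-1)^{1+\frac{pq-2q+2}{2}}\det(M)/\det(G)$, where $M$ is the minor of $G$ obtained by deleting row $\frac{pq-2q+2}{2}$ and column $1$, and $\det(G)=q$. Deleting the first column and last row leaves a block-lower-triangular-ish matrix whose determinant telescopes: one is left with products of $\det$'s of matrices built from $A$ and $-I_q$, together with the off-diagonal $-\mathbf{1}_q^v$ contribution. The determinant of the relevant near-triangular block structure should reduce, after expanding along the $-I_q$ superdiagonals, to $\pm 1$ (an entirely combinatorial computation involving only how the all-ones vector interacts with powers of $A$), giving the entry $\pm\tfrac1q$; a sign/positivity check (e.g.\ comparing with a small case such as $p=4$) fixes it to $+\tfrac1q$.

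The main obstacle I anticipate is the bookkeeping in either approach: correctly tracking the block indices, the boundary blocks (the atypical first row/column involving the scalar $q$ and the vectors $\mathbf{1}_q^{h},\mathbf{1}_q^{v}$, and the terminal block with no $-I_q$ below it), and the signs coming from the cofactor expansion. There is also a genuine subtlety in that the non-constant (Fourier) components of $\mathbf{v}_j$ must be shown to not pollute the first coordinate sum $\mathbf{1}_q^h\mathbf{v}_1$—this works precisely because $\mathbf{1}_q$ is the top eigenvector of the circulant $A$ and is orthogonal to all other Fourier modes, so only the all-ones part contributes to $x_1$. Once that orthogonality is invoked, the computation collapses to the scalar Chebyshev recursion and the claim $x_1=\tfrac1q$ falls out; I would present the Fourier/eigenvector argument as the conceptual core and relegate the index bookkeeping to a short explicit verification, ideally cross-checked against the $T_{4,q}$ case where $G$ is small enough to invert by hand.
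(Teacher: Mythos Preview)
Your first approach is sound and would yield a correct proof: projecting the block recursion onto the all-ones eigenvector of the circulant $A$ isolates the means $c_j$, which satisfy $c_{j+1}=2c_j-c_{j-1}$ with the two boundary conditions forcing $c_j\equiv\tfrac1q$ and hence $x_1=\tfrac1q$. The orthogonality point you flag (that non-constant Fourier modes of the $\mathbf v_j$ do not contribute to $\mathbf 1_q^h\mathbf v_1$) is exactly what makes this work. Your Cramer's rule alternative is vaguer and would likely be messier to execute.

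The paper's proof takes a shorter and more elementary path. Instead of solving $G\mathbf x=\mathbf e_{\text{last}}$ for the last \emph{column} of $G^{-1}$, it writes down the first \emph{row} of $qG^{-1}$ explicitly, namely
\[
v=\Bigl(\tfrac{p}{2},\ \underbrace{\tfrac{p}{2}-1,\ldots,\tfrac{p}{2}-1}_{q},\ \underbrace{\tfrac{p}{2}-2,\ldots,\tfrac{p}{2}-2}_{q},\ \ldots,\ \underbrace{1,\ldots,1}_{q}\Bigr),
\]
and verifies in three lines that $vG=(q,0,\ldots,0)$; the last entry of $v$ is $1$, giving the claim. The point is that with right-hand side $\mathbf e_1$ (rather than $\mathbf e_{\text{last}}$) the solution is \emph{purely} constant on each $q$-block---no correction terms are needed at all---so one can simply guess and check. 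Your Fourier argument explains \emph{why} such a simple guess exists (only the constant mode matters for the entry in question), but the paper sidesteps the machinery by picking the row whose entire solution, not just its projection, lives in the constant mode.
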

\begin{proof}
Consider the horizontal vector 
\[
v= \left( \frac{p}{2}, \hspace{.2cm} \underbrace{\frac{p}{2}-1, \cdots, \frac{p}{2}-1}_{q}, \hspace{.2cm} \underbrace{\frac{p}{2}-2, \cdots, \frac{p}{2}-2}_{q}, \hspace{.2cm} \cdots\cdots \hspace{.2cm} \underbrace{2, \cdots, 2}_{q}, \hspace{.2cm} \underbrace{1, \cdots, 1}_{q} \right) 
\]
of length $\frac{pq-2q+2}{2}$.
We claim that $vG=(q,0,\cdots,0)$. Denote the $k$th entry of $vG$ by $(vG)_k$. When $k=1$, we have 
\[
(vG)_1 = \frac{p}{2}\cdot q + q\cdot \left(\frac{p}{2}-1\right)\cdot (-1) = q.
\]
When $2\le k\le q+1$, we have
\[
(vG)_k = \frac{p}{2}\cdot (-1) + 2\cdot \left(\frac{p}{2}-1\right)\cdot 1 + \left(\frac{p}{2}-2\right)\cdot (-1)= 0.
\]
For $k\ge q+2$, it is easy to check that we also have $(vG)_k=0$. Thus $vG=(q,0,\ldots,0)$ and so $v$ is the top row of $qG^{-1}$. Therefore the $\left(1,\frac{pq-2q+2}{2}\right)$-entry of $G^{-1}$ is given by $\frac{1}{q}\cdot 1=\frac{1}{q}$.
\end{proof}

\begin{lem}
\label{entry2}
The $\left(\frac{pq-2q+2}{2},\frac{pq-2q+2}{2}\right)$-entry of $G^{-1}$ is of the form $\frac{m}{q}$, where $m$ is an integer satisfying $\frac{mp}{2} \equiv 1$ mod $q$.
\end{lem}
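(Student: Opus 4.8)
The plan is to read off the $(n,n)$-entry of $G^{-1}$ (with $n=\frac{pq-2q+2}{2}$) by solving a single linear system rather than inverting $G$. Since $\det G=\det(T_{p,q})=q$, the matrix $qG^{-1}$ is integral, so if $w$ is the unique integer vector with $Gw=q\,e_n$, where $e_n$ is the last standard basis vector, then $(G^{-1})_{n,n}=\frac{w_n}{q}$; hence $m=w_n$, and it suffices to prove $\frac{p}{2}\,w_n\equiv 1\pmod q$.

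First I would unpack the block structure of $G$. Put $s=\frac p2$, so that $G$ consists of the $1\times 1$ block $q$ followed by $s-1$ diagonal blocks of size $q$; write $w=(w_0;w^{(1)};\dots;w^{(s-1)})$ with $w_0\in\Z$, $w^{(j)}\in\Z^{q}$, and let $\mathbf 1=(1,\dots,1)^{T}$ and $\epsilon=(0,\dots,0,1)^{T}$ in $\Z^{q}$. Then $Gw=q\,e_n$ reads $qw_0=\mathbf 1^{T}w^{(1)}$, $Aw^{(1)}=w_0\mathbf 1+w^{(2)}$, $Aw^{(j)}=w^{(j-1)}+w^{(j+1)}$ for $2\le j\le s-2$, and $Aw^{(s-1)}=w^{(s-2)}+q\,\epsilon$, where $A$ is the adjacency matrix of the $q$-cycle (with the evident conventions when $p=4$, in which case the second and last equations coincide). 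The key structural fact is $A\mathbf 1=2\mathbf 1$. Exactly as in the proof of \Cref{entry1}, I would solve this recursion explicitly: with $p_0=0$, $p_1=1$, $p_{j+1}(x)=xp_j(x)-p_{j-1}(x)$ (so $p_j(2)=j$), an easy induction gives $w^{(j)}=p_j(A)w^{(1)}-(j-1)w_0\mathbf 1$; substituting into the first and last equations and separating $w^{(1)}$ into its $\mathbf 1$-component and its component in $\mathbf 1^{\perp}$ (using $p_s(A)\mathbf 1=s\mathbf 1$) forces $w_0=1$ and $p_s(A)w^{(1)}_{\perp}=q\epsilon-\mathbf 1$, whence $w^{(s-1)}=\mathbf 1+p_{s-1}(A)p_s(A)^{-1}(q\epsilon-\mathbf 1)$. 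Reading off the last coordinate gives
\[ m=w_n=\tfrac1s+q\,E,\qquad E:=\epsilon^{T}p_{s-1}(A)p_s(A)^{-1}\epsilon,\qquad\text{so}\qquad q\,E=\tfrac{sm-1}{s}. \]

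The arithmetic core is the identity $\det\!\bigl(p_s(A)\bigr)=\pm s=\pm\tfrac p2$. I would prove it by an eigenvalue computation: the eigenvalues of $A$ are $2\cos\tfrac{2\pi k}{q}$ for $k=0,\dots,q-1$, and $p_j(2\cos\theta)=\tfrac{\sin(j\theta)}{\sin\theta}$, so $\det(p_s(A))=s\prod_{k=1}^{q-1}\tfrac{\sin(\pi pk/q)}{\sin(2\pi k/q)}$; since $\gcd(p,q)=1$ and $q$ is odd, $k\mapsto pk$ and $k\mapsto 2k$ both permute $\Z/q\Z$, so the numerator and denominator products agree up to sign. (Alternatively one can extract $\det(p_s(A))=s$ from the known value $\det G=q$ by a Schur-complement computation, using that the top-left $q\times q$ block of the inverse of the block-tridiagonal part of $G$ is $p_{s-1}(A)p_s(A)^{-1}$, which has eigenvalue $\tfrac{s-1}{s}$ on $\mathbf 1$.) Granting the identity, Cramer's rule writes $p_s(A)^{-1}=\tfrac{1}{\det p_s(A)}\operatorname{adj}\!\bigl(p_s(A)\bigr)$ with $\operatorname{adj}(p_s(A))$ integral, so $sE\in\Z$; combined with $q\,E=\tfrac{sm-1}{s}$ this gives $sm-1=q\cdot sE\in q\Z$, i.e. $\tfrac{p}{2}\,m\equiv 1\pmod q$, as claimed.

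I expect the main obstacle to be the explicit solution of the block recursion — keeping careful track of the $w_0\mathbf 1$ corrections and cleanly separating the $\mathbf 1$- and $\mathbf 1^{\perp}$-parts — together with the determinant identity $\det(p_s(A))=\pm\tfrac p2$ and the simultaneously established invertibility of $p_s(A)$, which ultimately rests on $\gcd(p,q)=1$ preventing $\sin(\pi pk/q)$ from vanishing for $0<k<q$. Once these two ingredients are in place, the passage to the congruence is purely formal, and a small case such as $p=4$, $q=3$ (where $\det(p_2(A))=\det A=2=\tfrac p2$ and $m=-1$ with $2\cdot(-1)\equiv 1\pmod 3$) can be used as a sanity check.
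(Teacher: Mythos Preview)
Your proposal is correct and takes a genuinely different route from the paper's proof.

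The paper works entirely modulo $q$: it shows, by a somewhat intricate two-stage induction, that within each $q$-block of a row of $qG^{-1}$ all entries are congruent mod $q$, and then that the common residues in successive blocks satisfy a linear recursion forcing $v_k^r \equiv (\tfrac{p}{2}-k)\,v_{p/2-1}^q \pmod q$. Setting $k=0$ relates the top entry to the bottom-right entry, and the paper then \emph{invokes} \Cref{entry1} (the corner entry is $\tfrac{1}{q}$) together with the symmetry of $G^{-1}$ to conclude $\tfrac{p}{2}\cdot(qG^{-1})_{n,n}\equiv 1\pmod q$.

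Your approach instead solves the block-tridiagonal system $Gw=q\,e_n$ exactly, expressing $w^{(j)}$ as a Chebyshev-type polynomial $p_j(A)$ in the cycle adjacency matrix applied to $w^{(1)}$, and then reduces the congruence to the determinant identity $\det(p_s(A))=\pm\tfrac{p}{2}$, proved via the eigenvalues $2\cos\tfrac{2\pi k}{q}$ of $A$ and the Chebyshev relation $p_s(2\cos\theta)=\sin(s\theta)/\sin\theta$. This is more structural: it yields exact (not merely mod-$q$) formulas, recovers $w_0=1$ (and hence \Cref{entry1}) as a byproduct rather than using it as input, and isolates $\gcd(p,q)=1$ as the single arithmetic hypothesis making $p_s(A)$ invertible. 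The paper's argument, by contrast, is more elementary---no eigenvalues, no polynomial functional calculus---at the cost of a longer combinatorial induction and a dependence on the separate computation in \Cref{entry1}.
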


\begin{proof}
Let $v=\left(v_0,v_1^1,\cdots,v_1^q,v_2^1,\cdots,v_2^q,\cdots,v_{\frac{p}{2}-1}^1,\cdots,v_{\frac{p}{2}-1}^q\right)$ be the $i$th row of the matrix $q G^{-1}$. For simplicity, our notation of $v_k^r$ here is chosen to be cyclic on $r$ mod $q$, \ie $v_k^{r+q}=v_k^r$. Furthermore, we define $v_0^r=v_0$ for any $r$.

We will show that for any fixed $k\in \{1, \dots, \frac{p}{2}-1\}$, the integers $v_k^1,\cdots,v_k^q$ are congruent mod $q$. Since $v$ is the $i$th row of $qG^{-1}$, $vG$ is of the form $qe_{i}$, where $e_i$ is the $i$th standard basis vector. Thus every entry of $vG$ is either $0$ or $q$. Hence the following conditions hold:

\begin{itemize}
    \item For any indices $k$ and $r$ such that $0\le k \le \frac{p}{2}-3$, we have $v_{k+1}^{r}+v_{k+1}^{r+2}\equiv v_{k}^{r+1}+v_{k+2}^{r+1}$ mod $q$.
    \item For any index $r$, we have $v_{\frac{p}{2}-2}^{r+1} \equiv v_{\frac{p}{2}-1}^{r}+v_{\frac{p}{2}-1}^{r+2}$ mod $q$.
\end{itemize}

We claim that for any $r$ and $k$ with $0\le k\le \frac{p}{2}-1$, we have
\[
v_{k}^{r}+v_{k}^{r+2}+\cdots+v_{k}^{r+p-2k} =v_{k-1}^{r+1}+v_{k-1}^{r+3}+\cdots+v_{k-1}^{r+p-2k-1} \mod q.
\]
Using the second condition above as the base case, inductively assume that
\[
v_{k+1}^{r}+v_{k+1}^{r+2}+\cdots+v_{k+1}^{r+p-2k-2} \equiv v_{k}^{r+1}+v_{k}^{r+3}+\cdots+v_{k}^{r+p-2k-3} \mod q.
\]
If $\frac{p-2k}{2}$ is even, then by the first condition, the quantity 
\[
 v_{k}^{r}+v_{k}^{r+2}+v_{k}^{r+4}+v_{k}^{r+6}+\cdots+v_{k}^{r+p-2k-2}+v_{k}^{r+p-2k}
 \]
is equivalent modulo $q$ to 
\[
v_{k-1}^{r+1}+v_{k-1}^{r+5}+\cdots+v_{k-1}^{r+p+2k-7}+v_{k-1}^{r+p+2k-1} + v_{k+1}^{r+1}+v_{k+1}^{r+5}+\cdots+v_{k+1}^{r+p+2k-7}+v_{k+1}^{r+p+2k-1}.
\]
Applying the inductive hypothesis to the latter half of this expression, we see that this is equivalent modulo $q$ to 
\[
v_{k-1}^{r+1}+v_{k-1}^{r+5}+\cdots+v_{k-1}^{r+p+2k-7}+v_{k-1}^{r+p+2k-1}+v_k^{r+2}+v_k^{r+4}+\cdots +v_k^{r+p-2k-2}-v_{k+1}^{r+3}-v_{k+1}^{r+7}-\cdots-v_{k+1}^{r+p+2k-7}-v_{k+1}^{r+p+2k-1}
\]
Finally, applying the first condition to the terms $v_k^{r+2}+v_k^{r+4}+\cdots +v_k^{r+p-2k-2}$, the above expression is equivalent modulo $q$ to
\[
v_{k-1}^{r+1}+v_{k-1}^{r+3}+\cdots +v_{k-1}^{r+p-2k-1},
\]
as desired.

On the other hand, if $\frac{p-2k}{2}$ is odd, then by the first condition, the quantity 
\[
 v_{k}^{r}+v_{k}^{r+2}+v_{k}^{r+4}+v_{k}^{r+6}+\cdots+v_{k}^{r+p-2k-4}+v_{k}^{r+p-2k-4}+v_{k}^{r+p-2k}
 \]
is equivalent modulo $q$ to 
\[
 v_{k-1}^{r+1}+v_{k-1}^{r+5}+\cdots+v_{k-1}^{r+p+2k-7}+v_{k-1}^{r+p+2k-3} + v_{k+1}^{r+1}+v_{k+1}^{r+5}+\cdots+v_{k+1}^{r+p+2k-7}+v_{k+1}^{r+p+2k-3}+v_{k}^{r+p-2k}.
\]
Applying the inductive hypothesis to the latter half of the portion of the expression preceding $v_{k}^{r+p-2k}$, we see that this is equivalent modulo $q$ to 
\[
v_{k-1}^{r+1}+v_{k-1}^{r+5}+\cdots+v_{k-1}^{r+p+2k-7}+v_{k-1}^{r+p+2k-3}+v_k^{r+2}+v_k^{r+4}+\cdots +v_k^{r+p-2k}-v_{k+1}^{r+3}-v_{k+1}^{r+7}-\cdots-v_{k+1}^{r+p+2k-3}
\]
Finally, applying the first condition to the terms $v_k^{r+2}+v_k^{r+4}+\cdots +v_k^{r+p-2k}$, the above expression is equivalent modulo $q$ to
\[
v_{k-1}^{r+1}+v_{k-1}^{r+3}+\cdots +v_{k-1}^{r+p-2k-1},
\]
as desired.

Taking $k=1$ gives:
\[
v_{1}^{r}+v_{1}^{r+2}+\cdots+v_{1}^{r+p-2} = v_{0}^{r+1}+v_{0}^{r+3}+\cdots+v_{0}^{r+p-3} = \left(\frac{p}{2}-1\right) v_0\mod q.
\]
So, for any $r$, we have $v_{1}^{r}-v_{1}^{r+p}=(v_{1}^{r}+v_{1}^{r+2}+\cdots+v_{1}^{r+p-2})-(v_{1}^{r+2}+v_{1}^{r+4}+\cdots+v_{1}^{r+p})=0$ mod $q$, \ie $v_{1}^{r}=v_{1}^{r+p}$ mod $q$. But since $q$ is relatively prime to $p$, there exists an integer $\alpha$ such that $\alpha p=1$ mod $q$, so we get $v_{1}^{r}=v_{1}^{r+\alpha p}=v_{1}^{r+1}$ mod $q$ for all $r$. Therefore we deduce that $v_{1}^{1}=\cdots=v_{1}^{q}$ mod $q$. It now follows by the first condition and induction on $k$, that for a fixed $k$, the integers $v_k^1,\ldots,v_k^q$ are congruent modulo $q$.

We next claim that for all $k$ and $r$, $v_{k}^{r} \equiv \left(\frac{p}{2}-k\right)v_{\frac{p}{2}-1}^{q}$ mod $q$. We will prove this via induction and appealing to the fact that for all $k$, $v_k^1,\ldots,v_k^q$ are equivalent mod $q$. The claim is clearly true when $k=\frac{p}{2}-1$ and $k=\frac{p}{2}-2$. Assume inductively that the claim is true for all $j\ge k+1$. Then 
$$v_{k}^{r} \equiv v_{k+1}^{r-1}+v_{k+1}^{r+1}-v_{k+2}^r \equiv 2v_{k+1}^r-v_{k+2}^r\equiv  2\left(\frac{p}{2}-k-1\right)v_{\frac{p}{2}-1}^{q}-\left(\frac{p}{2}-k-2\right)v_{\frac{p}{2}-1}^{q}\equiv\left(\frac{p}{2}-k\right)v_{\frac{p}{2}-1}^{q}\mod q.$$ 

In particular, when $k=0$, we have that $v_0\equiv \frac{p}{2}v_{\frac{p}{2}-1}^q$ mod $q$. Thus, letting $v$ be the last row of $qG^{-1}$, we have $(qG^{-1})_{\frac{pq-2q+2}{2},1}\equiv\frac{p}{2}\cdot (qG^{-1})_{\frac{pq-2q+2}{2},\frac{pq-2q+2}{2}}$ mod $q$.
Since $G^{-1}$ is symmetric, we have that $(qG^{-1})_{1,\frac{pq-2q+2}{2}}\equiv\frac{p}{2}\cdot (qG^{-1})_{\frac{pq-2q+2}{2},\frac{pq-2q+2}{2}}$ mod $q$.
By \Cref{entry1}, the $(1,\frac{pq-2q+2}{2})$-entry of $G^{-1}$ is $\frac{1}{q}$. Therefore, $(qG^{-1})_{\frac{pq-2q+2}{2},1}=(qG^{-1})_{1,\frac{pq-2q+2}{2}}=1$ and so $\frac{p}{2}\cdot (qG^{-1})_{\frac{pq-2q+2}{2},\frac{pq-2q+2}{2}}\equiv 1$ mod $q$. Thus we now see that $(qG^{-1})_{\frac{pq-2q+2}{2},\frac{pq-2q+2}{2}}$ is congruent to the modular inverse of $\frac{p}{2}$ mod $q$, as desired.
\end{proof}
To calculate $\lambda_{\Sigma(2,p,q)}$ explicitly, we need to understand the domain on which the pairing is defined, namely $H_1 (\Sigma_2(S^3,T_{p,q});\mathbb{Z})$. It turns out that this group is always cyclic, so that $\lambda_{\Sigma(2,p,q)}$ is determined by a single rational number.
\begin{lem}
$H_1 (\Sigma_2 (S^3,T_{p,q});\mathbb{Z})\cong \mathbb{Z}/q\mathbb{Z}$.
\label{lem:H1}
\end{lem}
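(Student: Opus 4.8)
\emph{Proof proposal.}
The plan is to work, just as in the computation of the linking form above, with the Goeritz matrix $G$ of $T_{p,q}$ coming from the surface $F$ of Figure~\ref{fig:toruscheckerboard}: by \cite[Section 3]{gordonlitherland} the matrix $G$ presents $H_1(\Sigma_2(T_{p,q});\mathbb{Z})$, so that $H_1(\Sigma_2(T_{p,q});\mathbb{Z})\cong\operatorname{coker}(G)=\mathbb{Z}^N/G\mathbb{Z}^N$ with $N=\tfrac{pq-2q+2}{2}$. Since $\det G=\pm\det(T_{p,q})=\pm q$, this group has order $q$, and hence it suffices to show that $\operatorname{coker}(G)$ is cyclic. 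For $p=2$ we have $G=(q)$ and there is nothing to prove, so we may assume $p\ge 4$.

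Write $e_0$ for the first basis vector of $\mathbb{Z}^N$ and $e_k^r$, for $1\le k\le\tfrac{p}{2}-1$ and $r\in\mathbb{Z}/q\mathbb{Z}$, for the basis vector in position $r$ of the $k$-th block, and set $e_0^r:=e_0$. Reading off the columns of $G$, the group $\operatorname{coker}(G)$ is presented by the relations $\sum_r e_1^r=q\,e_0$ (from column $0$), $e_{k+1}^s=e_k^{s-1}+e_k^{s+1}-e_{k-1}^s$ for $1\le k\le\tfrac{p}{2}-2$, and $e_{\frac{p}{2}-2}^s=e_{\frac{p}{2}-1}^{s-1}+e_{\frac{p}{2}-1}^{s+1}$ (from the last block). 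These are precisely the congruences appearing in the proof of Lemma~\ref{entry2}, now read as exact identities in $\operatorname{coker}(G)$; running the same induction verbatim gives $e_1^r=e_1^{r+p}$ for all $r$, whence $e_1^1=\dots=e_1^q$ because $\gcd(p,q)=1$. Substituting back into the recursion, each $e_k^r$ becomes independent of $r$; if $y_k$ denotes the common value then $y_0=e_0$, $y_1=:x$ and $y_{k+1}=2y_k-y_{k-1}$, so $y_k=e_0+k(x-e_0)$.

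It remains to feed in the two surviving relations. Putting $d:=x-e_0$, the last-block relation becomes $y_{\frac{p}{2}-2}=2y_{\frac{p}{2}-1}$, which simplifies to $e_0=-\tfrac{p}{2}\,d$; hence $e_0$, $x=e_0+d$ and every $e_k^r=y_k$ are integer multiples of $d$, so $\operatorname{coker}(G)$ is cyclic, generated by $d$. Together with $|\operatorname{coker}(G)|=q$ this yields $H_1(\Sigma_2(T_{p,q});\mathbb{Z})\cong\mathbb{Z}/q\mathbb{Z}$. (The column-$0$ relation $q\,x=q\,e_0$ gives directly $q\,d=0$, which reproves $|\operatorname{coker}(G)|\le q$ without invoking $\det G$.)

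The only place needing care is the first step: one must check that the mod-$q$ congruences of Lemma~\ref{entry2} may be replaced by exact equalities in $\operatorname{coker}(G)$, and treat the degenerate case $p=4$, where there are no interior block relations and the periodicity $e_1^r=e_1^{r+4}$ has to be extracted directly from the family of last-block relations $e_1^{s-1}+e_1^{s+1}=e_0$ (again using that $q$ is odd). Everything else is the bookkeeping already carried out in Lemma~\ref{entry2}. Alternatively, one could bypass the Goeritz computation entirely by invoking the Seifert-fibered plumbing description of $\Sigma(2,p,q)$, from which cyclicity of $H_1$ is also standard; but the matrix argument above is self-contained and reuses work already done.
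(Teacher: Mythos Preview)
Your argument is correct, but it takes a different route from the paper's. The paper's proof is essentially a one-liner via the Alexander module: since the Alexander module of $T_{p,q}$ is $\mathbb{Z}[t,t^{-1}]/(\Delta_{T_{p,q}}(t))$ (a cyclic module, which follows from Fox calculus), and $H_1(\Sigma_2(K);\mathbb{Z})$ is obtained from the Alexander module by setting $t=-1$, one immediately gets $H_1(\Sigma_2(T_{p,q});\mathbb{Z})\cong \mathbb{Z}/\Delta_{T_{p,q}}(-1)\mathbb{Z}=\mathbb{Z}/q\mathbb{Z}$.

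Your approach instead stays entirely within the Goeritz framework already set up in the paper: you show directly that $\operatorname{coker}(G)$ is cyclic by rerunning the telescoping induction of Lemma~\ref{entry2} as exact identities in the cokernel (rather than congruences mod $q$), deducing that all $e_k^r$ are integer multiples of $d=x-e_0$. This is longer but has the virtue of being self-contained and of recycling combinatorics the reader has just seen, whereas the paper's proof imports two external facts (cyclicity of the Alexander module of torus knots, and the identification of $H_1$ of the branched double cover with the Alexander module mod $t+1$). The paper's argument is more conceptual and generalizes immediately to any knot with cyclic Alexander module; yours is tailored to the specific presentation $G$ but avoids any appeal outside the section. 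Both are valid.
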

\begin{proof}
The Alexander module $A_{T_{p,q}}$ of $T_{p,q}$ is given by $\mathbb{Z}[t,t^{-1}]/(\Delta(t))$, where $\Delta(t)$ denotes the Alexander polynomial of $T_{p,q}$; this can be shown easily using Fox calculus. It is known that taking its quotient by the ideal generated by $t+1$ gives the first homology group $H_1$ of $\Sigma_2(S^3,T_{p,q})$; see \cite[Chapter 4]{stevens1996homology} for a general formula for $H_1$ of branched cyclic covers over knots. Therefore we get
\[
H_1 (\Sigma_2(S^3,T_{p,q});\mathbb{Z})\simeq A_{T_{p,q}} /(t+1) \simeq \mathbb{Z}[t,t^{-1}]/(t+1,\Delta_{T_{p,q}}(t)) \simeq \mathbb{Z}/\Delta_{T_{p,q}}(-1)\mathbb{Z} \simeq \mathbb{Z}/\det(T_{p,q})\mathbb{Z}.
\]
Since $\det(T_{p,q})=q$, we deduce that $H_1 (\Sigma_2(S^3,T_{p,q});\mathbb{Z})\simeq \mathbb{Z}/q\mathbb{Z}$.
\end{proof}

Now we are ready to compute $\lambda_{\Sigma(2,p,q)}$.

\begin{prop}
\label{linkingform}
For any relatively prime positive integers $p$ and $q$ with $p$ even, we have $\lambda_{\Sigma(2,p,q)}(x,x) \equiv - \frac{p}{2q}$ mod $1$ for some generator $x$ of $H_1(\Sigma(2,p,q);\mathbb{Z})$.
\end{prop}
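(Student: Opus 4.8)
The plan is to read off $\lambda_{\Sigma(2,p,q)}$ from the Gordon--Litherland presentation of the linking form by $-G^{-1}$, using the two entries of $G^{-1}$ computed in \Cref{entry1} and \Cref{entry2}, together with the fact (\Cref{lem:H1}) that $H_1(\Sigma(2,p,q);\mathbb{Z})\cong\mathbb{Z}/q\mathbb{Z}$. Write $N=\frac{pq-2q+2}{2}$ for the size of the Goeritz matrix $G$, and abbreviate $\lambda=\lambda_{\Sigma(2,p,q)}$. By \cite[Section 3]{gordonlitherland}, under the identification $H_1(\Sigma(2,p,q);\mathbb{Z})\cong\mathbb{Z}^N/G\mathbb{Z}^N$ the linking form is given on the standard basis $e_1,\dots,e_N$ of $\mathbb{Z}^N$ by $\lambda([e_i],[e_j])\equiv-(G^{-1})_{ij}\bmod 1$.

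First I would check that $[e_N]$ generates $H_1(\Sigma(2,p,q);\mathbb{Z})\cong\mathbb{Z}/q\mathbb{Z}$. Let $d$ be the order of $[e_N]$, so $d\mid q$ by Lagrange. By $\mathbb{Z}$-bilinearity of $\lambda$ and \Cref{entry1},
\[
0 = \lambda([e_1], d[e_N]) = d\cdot\lambda([e_1],[e_N]) \equiv -\frac{d}{q} \pmod 1,
\]
so $q\mid d$; hence $d=q$ and $[e_N]$ is a generator.

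By \Cref{entry2}, $\lambda([e_N],[e_N])\equiv-\frac{m}{q}\bmod 1$ for an integer $m$ with $\frac{mp}{2}\equiv 1\pmod q$; note that $\gcd(\frac{p}{2},q)=1$ since $q$ is odd and $\gcd(p,q)=1$, so $m$ is the multiplicative inverse of $\frac{p}{2}$ modulo $q$. To finish I would rescale: set $x=\frac{p}{2}[e_N]$, which is again a generator of $\mathbb{Z}/q\mathbb{Z}$ because $\gcd(\frac{p}{2},q)=1$. Then by bilinearity
\[
\lambda(x,x) = \left(\frac{p}{2}\right)^2 \lambda([e_N],[e_N]) \equiv -\frac{(p/2)^2 m}{q} \equiv -\frac{p/2}{q} \equiv -\frac{p}{2q} \pmod 1,
\]
where the third congruence uses $(p/2)m\equiv 1\pmod q$. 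This is the assertion of the proposition.

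The only substantive point beyond bookkeeping is this rescaling step: \Cref{entry2} records the self-linking of $[e_N]$ in terms of $(\frac{p}{2})^{-1}$ rather than $\frac{p}{2}$ itself, and one repairs this by passing to the generator $\frac{p}{2}[e_N]$ and using that multiplying a generator of a cyclic linking form by a unit $u$ scales its self-linking by $u^2$. I do not expect any real obstacle here, as the genuine computations have already been carried out in \Cref{entry1}, \Cref{entry2} and \Cref{lem:H1}.
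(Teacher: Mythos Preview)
Your proof is correct and follows essentially the same approach as the paper: both arguments use \Cref{entry1} to verify that $[e_N]$ generates $H_1(\Sigma(2,p,q);\mathbb{Z})$, use \Cref{entry2} to read off $\lambda([e_N],[e_N])\equiv -m/q$, and then rescale by $\tfrac{p}{2}$ to obtain the generator $x$ with $\lambda(x,x)\equiv -\tfrac{p}{2q}$. The only cosmetic difference is that the paper verifies $[e_N]$ has order $q$ by checking integrality of the first coordinate of $G^{-1}(0,\dots,0,k)^T$, whereas you phrase the same computation via bilinearity of $\lambda$ using $\lambda([e_1],[e_N])\equiv -\tfrac{1}{q}$; these are the same argument in different clothing.
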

\begin{proof}
As above, let $X$ denote the double cover of $B^4$ branched over the surface $F$ after it is pushed into $B^4$. Then $\partial X=\Sigma(2,p,q)$ and $X$ is simply connected. Consequently, we have the following long exact sequence 
$$0\to H_2(X)\xrightarrow{G} H_2(X,\Sigma(2,p,q))\xrightarrow{f} H_1(\Sigma(2,p,q))\to 0.$$
Given $y\in H_1(\Sigma(2,p,q))$, to compute $\lambda_{\Sigma(2,p,q)}(y,y)$ using $G$, we must first lift $y$ to $H_2(X,\Sigma(2,p,q))$ via the map $f$.
Note that we can view $H_1(\Sigma(2,p,q))$ as the cokernel of $G$ and $f$ as the quotient map. 

Since the $\left(1,\frac{pq-2q+2}{2}\right)$-entry of $G^{-1}$ is $\frac{1}{q}$, the element $(0,\dots,0,1)\in H_2 (X,\Sigma(2,p,q))$ is a lift of $1\in H_1 (\Sigma(2,p,q))\cong \mathbb{Z}/q\mathbb{Z}$. To see this, let $k$ be any integer satisfying $f(0,\dots,0,k)=0$. Then we have $(0,\dots,0,k)^T-Gv=0$ for some vector $v=(v_1,\dots,v_{\frac{pq-2q+2}{2}})^T$, where all $v_i$ are integers. But then $v=G^{-1} (0,\dots,0,k)^T$, so by comparing the first entries of both sides, we get $v_1=\frac{k}{q}$. Since $v_1\in\mathbb{Z}$, $k$ must be a multiple of $q$. Thus we see that $f(0,\dots,0,1)$ generates $H_1(\Sigma(2,p,q))$, as desired.

Now, recall that the linking form $\lambda_{\Sigma(2,p,q)}$ is presented by $-G^{-1}$ mod $1$. Thus the value of  $-\lambda_{\Sigma(2,p,q)}(1,1)=(0,\dots,0,1)G^{-1}(0,\dots,0,1)^T$ is given by the $\left(\frac{pq-2q+2}{2},\frac{pq-2q+2}{2}\right)$-entry of $G^{-1}$, which by \Cref{entry2} is given by $\frac{m}{q}$ where $m$ is some integer satisfying $\frac{mp}{2} \equiv 1$ mod $q$. So we get $\lambda_{\Sigma(2,p,q)}\left(\frac{p}{2},\frac{p}{2}\right)=-\frac{mp^2}{4q} \equiv -\frac{p}{2q}$ mod $1$. Since $q$ is relatively prime to $p$, $\frac{p}{2}\in H_1(\Sigma(2,p,q))$ is also a generator of $H_1(\Sigma(2,p,q))$. Therefore taking $x=\frac{p}{2}$ proves the lemma.
\end{proof}

\begin{rem}
Feller and Golla proved in \cite{feller2020nonorientable} that the linking form of the Brieskorn sphere $\Sigma(2,2p,2pk\pm 1)$ represents $\mp\frac{p}{2pk+1}$ as a square. This is coherent with our general computation in \Cref{linkingform}.
\end{rem}

\subsection{Number Theory Interlude}
Before we can prove \Cref{asymptotic}, we need some number theoretic results.

\begin{lem}
\label{numlem}
Suppose that $p$ is even and $\frac{p}{2}$ is not a perfect square. There exists an integer $r$, relatively prime to $p$, such that for any prime $s$ satisfying $s\equiv r$ mod $2p$, the integers $\frac{p}{2}$ and $-\frac{p}{2}$ are not quadratic residues mod $s$.
\end{lem}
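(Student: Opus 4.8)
The plan is to reduce the statement to a single assertion about a Jacobi symbol and then settle that assertion by elementary character theory. Write $n=p/2$; by hypothesis $n\ge 2$ and $n$ is not a perfect square, so we may factor $n=m^2d$ with $d$ squarefree and $d\ge 2$. For any odd prime $s$ with $\gcd(s,n)=1$, multiplicativity of the Legendre symbol gives $\left(\tfrac{p/2}{s}\right)=\left(\tfrac{n}{s}\right)=\left(\tfrac{d}{s}\right)$ and $\left(\tfrac{-p/2}{s}\right)=\left(\tfrac{-1}{s}\right)\left(\tfrac{d}{s}\right)$. So the first thing I would do is observe that it suffices to find a residue class $r$ modulo $2p=4n$ with $\gcd(r,4n)=1$ (hence $\gcd(r,p)=1$), with $r\equiv 1\pmod 4$, and with $\left(\tfrac{d}{r}\right)=-1$ in the sense of the Jacobi/Kronecker symbol: indeed, any prime $s\equiv r\pmod{2p}$ then has $s\equiv 1\pmod 4$, so $\left(\tfrac{-1}{s}\right)=1$, and $s\equiv r\pmod{4d}$ (since $4d\mid 4n$), so $\left(\tfrac{d}{s}\right)=\left(\tfrac{d}{r}\right)=-1$ by periodicity of the Jacobi symbol modulo $4d$; thus both $\left(\tfrac{p/2}{s}\right)$ and $\left(\tfrac{-p/2}{s}\right)$ equal $-1$, as required.

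Next I would produce such an $r$. The key point is that $\chi_d:=\left(\tfrac{d}{\cdot}\right)$ is a non-principal real Dirichlet character of conductor dividing $4d$ --- it is the quadratic character cutting out $\QQ(\sqrt d)$ --- and, because $d\ge 2$, it is neither the trivial character nor the mod-$4$ character $\left(\tfrac{-1}{\cdot}\right)$ (the latter cuts out $\QQ(\sqrt{-1})$). Since the only Dirichlet characters that are trivial on the subgroup of residues $\equiv 1\pmod 4$ are the trivial one and $\left(\tfrac{-1}{\cdot}\right)$, it follows that $\chi_d$ is non-trivial on that subgroup; hence there is an integer $a_0\equiv 1\pmod 4$ with $\gcd(a_0,4d)=1$ and $\left(\tfrac{d}{a_0}\right)=-1$. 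Then I would use the Chinese Remainder Theorem to pick $r$ with $r\equiv a_0\pmod{4d}$ and $r\equiv 1\pmod\ell$ for every prime $\ell\mid n$ not dividing $4d$; this $r$ is coprime to $4n$, satisfies $r\equiv 1\pmod 4$, and has $\left(\tfrac{d}{r}\right)=\left(\tfrac{d}{a_0}\right)=-1$ because $\chi_d$ has conductor dividing $4d$.

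I expect the one genuinely delicate step to be the character-theoretic claim that $\chi_d$ is not annihilated by restriction to the progression $1\bmod 4$: this is exactly where the hypothesis that $\tfrac p2$ is not a perfect square enters (it makes $\chi_d$ non-trivial), together with positivity of $\tfrac p2$ to rule out the coincidence $\chi_d=\left(\tfrac{-1}{\cdot}\right)$. The remaining ingredients --- multiplicativity of the Jacobi symbol, its periodicity modulo $4d$, and a Chinese Remainder Theorem assembly --- are routine. As a consistency check, for $p=4$ one has $d=2$ and $\chi_2=\left(\tfrac{2}{\cdot}\right)$, and the recipe forces $r\equiv 5\pmod 8$, matching the value appearing in \Cref{rem:primefactors}.
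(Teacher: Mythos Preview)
Your proof is correct and follows essentially the same route as the paper's. Both arguments identify the Kronecker character attached to $p/2$ (the paper uses $\chi_{2p}$, which agrees with your $\chi_d$ on odd arguments since $2p=4m^2d$), verify it is nontrivial, and show it is not killed by restriction to residues $\equiv 1\pmod 4$; you phrase this last step via the correspondence with quadratic fields ($\mathbb{Q}(\sqrt d)\ne\mathbb{Q}(i)$ for $d\ge 2$), while the paper uses Cox's lemma to get $\chi_{2p}(-1)=1$ directly and argues by a counting contradiction. Your final CRT step to lift $a_0$ from modulus $4d$ to modulus $2p$ is a minor technicality the paper sidesteps by working modulo $2p$ throughout, but the substance is the same.
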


To prove this, we need the following lemma.

\begin{lem}[{\cite[Lemma 1.14]{cox2011primes}}]
\label{lem1}
Let $n$ be an integer such that $n\equiv 0,1 \pmod 4$. Then there exists a unique group homomorphism 
\[
\chi_n \colon (\mathbb{Z}/n\mathbb{Z})^\times \rightarrow \{\pm 1\}
\]
such that $\left(\frac{n}{s}\right)=\chi_n (s)$ for all odd primes $s$ not dividing $n$. Here, $\left(\frac{n}{s}\right)$ denotes the Legendre symbol. Furthermore, we have 
\[
\chi_n (-1) = \begin{cases}
1 &\text{ if } n>0,\\
-1 &\text{ if } n<0.
\end{cases}
\]
\end{lem}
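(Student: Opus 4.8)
The plan is as follows. The statement is classical --- it is \cite[Lemma 1.14]{cox2011primes} --- and may be rephrased as the assertion that the function $s\mapsto\left(\frac{n}{s}\right)$, a priori defined only on odd primes $s\nmid n$, is the restriction to primes of a (necessarily unique) real Dirichlet character modulo $n$; equivalently, that the Kronecker symbol $\left(\frac{n}{\cdot}\right)$ is periodic modulo $|n|$ precisely when $n\equiv0,1\pmod4$. I would split the argument into uniqueness, existence (via an explicit formula built from the Jacobi symbol), and the evaluation at $-1$, with the hypothesis $n\equiv0,1\pmod4$ entering only in the existence step.

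For uniqueness, I would invoke Dirichlet's theorem on primes in arithmetic progressions: every class of $(\mathbb{Z}/n\mathbb{Z})^\times$ is represented by an odd prime $s\nmid n$. Hence the prescribed identity $\chi_n(s)=\left(\frac{n}{s}\right)$ already pins down any homomorphism $\chi_n$ on all of $(\mathbb{Z}/n\mathbb{Z})^\times$.

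For existence, I would first reduce to prime powers by writing $|n|$ as a product of prime powers and using multiplicativity of the Jacobi symbol in its top entry: it then suffices to understand $\left(\tfrac{p^k}{s}\right)$ for odd primes $p$, the factor $\left(\tfrac{2^a}{s}\right)$ from the $2$-part of $n$ when $n$ is even, and $\left(\tfrac{-1}{s}\right)$ coming from the sign of $n$, and to multiply the resulting characters after pulling them back to $(\mathbb{Z}/n\mathbb{Z})^\times$. Quadratic reciprocity for the Jacobi symbol gives, for odd positive $m$ and odd primes $s\nmid m$, $\left(\frac{m}{s}\right)=(-1)^{\frac{(m-1)(s-1)}{4}}\left(\frac{s}{m}\right)$, where $a\mapsto\left(\frac{a}{m}\right)$ is a homomorphism $(\mathbb{Z}/m\mathbb{Z})^\times\to\{\pm1\}$; moreover $\left(\frac{-1}{s}\right)=(-1)^{\frac{s-1}{2}}$ depends only on $s\bmod4$ and $\left(\frac{2}{s}\right)=(-1)^{\frac{s^2-1}{8}}$ only on $s\bmod8$. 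The hypothesis $n\equiv0,1\pmod4$ is exactly what guarantees that all these correction factors are already visible modulo $n$: if $n$ is odd then $n\equiv1\pmod4$ forces the reciprocity signs to cancel in the product, so one may simply take $\chi_n(a)=\left(\frac{a}{|n|}\right)$; if $n$ is even then $4\mid n$, and either $8\nmid n$ (so the exponent of $2$ in $|n|$ is $2$ and $\left(\frac{2}{s}\right)^2=1$ contributes nothing) or $8\mid n$ (so the $\left(\frac{2}{s}\right)$-factor, depending only on $s\bmod8$, factors through $(\mathbb{Z}/n\mathbb{Z})^\times$), while the $s\bmod4$ dependence of the remaining reciprocity sign is harmless because $4\mid n$. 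In each case one assembles a well-defined homomorphism $\chi_n\colon(\mathbb{Z}/n\mathbb{Z})^\times\to\{\pm1\}$ with $\chi_n(s)=\left(\frac{n}{s}\right)$ for all odd primes $s\nmid n$.

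For the value at $-1$: by the uniqueness just established, $\chi_n(-1)=\left(\frac{n}{s}\right)$ for any odd prime $s\equiv-1\pmod n$ (which exists by Dirichlet), and evaluating $\left(\frac{n}{s}\right)$ for such $s$ via reciprocity and its supplements gives $1$ if $n>0$ and $-1$ if $n<0$; alternatively one reads this directly off the explicit formula, tracking $\left(\frac{-1}{m}\right)=(-1)^{\frac{m-1}{2}}$ over the odd prime-power parts together with the $2$-part and $\operatorname{sign}(n)$. I expect the one genuinely delicate point to be the sign bookkeeping inside the existence step --- verifying that the dependencies on $s\bmod4$ and $s\bmod8$ introduced by the supplements to quadratic reciprocity really are consistent with $\chi_n$ being a function on $(\mathbb{Z}/n\mathbb{Z})^\times$, which is precisely where (and only where) the hypothesis $n\equiv0,1\pmod4$ is needed.
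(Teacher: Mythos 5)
The paper does not prove this lemma at all --- it is quoted verbatim from Cox's book (Lemma~1.14 of \emph{Primes of the form $x^2+ny^2$}) and used as a black box in the proof of Lemma~\ref{numlem}. Your sketch is correct and is essentially the standard argument behind Cox's proof: existence of $\chi_n$ via the Jacobi symbol, quadratic reciprocity and its supplements (with $n\equiv 0,1\pmod 4$ entering exactly where you say it does), uniqueness via Dirichlet's theorem on primes in arithmetic progressions, and the value at $-1$ read off from either route.
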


\begin{proof}[Proof of \Cref{numlem}]
Since $\frac{p}{2}$ is not a perfect square, $2p$ is a multiple of $4$ which is not a perfect square.
Using \Cref{lem1}, there is a unique nontrivial group homomorphism $\chi_{2p}\colon (\mathbb{Z}/2p\mathbb{Z})^\times \rightarrow \{\pm 1\}$ such that $\left(\frac{2p}{s}\right)=\chi_{2p} (s)$ for all odd primes $s$ not dividing $p$. It is a well-known fact (see, for example, \cite[Theorem 3 of Section 5.2]{ireland1990classical}) that for any non-square integer $n$, there exist infinitely many primes for which $n$ is a quadratic nonresidue; since $2p$ is not a perfect square, we see that $\chi_{2p}$ cannot be a trivial map. Hence the size of $\ker\left(\chi_{2p}\right)$ is exactly half of its domain $(\mathbb{Z}/2p\mathbb{Z})^\times$. 

We claim that there exists an element $r$ not contained in $\ker\left(\chi_{2p}\right)$ such that $r\equiv1 \pmod 4$.
Suppose not. Then for each $s\notin \ker\left(\chi_{2p}\right)$, $s\equiv 3 \pmod 4$. Since half of the elements in $(\mathbb{Z}/2p\mathbb{Z})^\times$ are of the form $4n+3$ and the other half are of the form $4n+1$, we have that $\ker\chi_{2p}=\{s\in(\mathbb{Z}/2p\mathbb{Z})^{\times}\,|\,s\equiv 1\pmod{4}\}$. However, we have $\chi_{2p}(-1)=1$ since $2p>0$, while $-1 \equiv 3 \mod 4$. Hence, we have reached a contradiction and there must exist an element $r\notin\ker(\chi_{2p})$ such that $r\equiv1\pmod4$.

Now, for any prime $s$ congruent to $r \mod 2p$, we have $-1=\chi_{2p}(r)=\left( \frac{2p}{s}\right)=\left(\frac{2}{s}\right)^2 \left(\frac{p/2}{s}\right) = \left(\frac{p/2}{s}\right)$. Moreover, since $s\equiv 1\pmod 4$, we have that $\left(\frac{-1}{s}\right)=1$ and, consequently, $\left(\frac{-p/2}{s}\right)=\left(\frac{-1}{s}\right)\left(\frac{p/2}{s}\right)=-1$. Therefore both $\frac{p}{2}$ and $-\frac{p}{2}$ are quadratic nonresidues modulo $s$.
\end{proof}

\subsection{Proof of \Cref{asymptotic}}
Recall the following theorem of Murakami and Yasuhara, which is a direct corollary of their lower bound on $\gammatop$ from \Cref{lem:linkingpairing}.

\begin{thm}[{\cite[Corollary 2.7]{murakami2000four}}]\label{thm:murakami}
If a knot $K$ in $S^3$ bounds a locally flat M{\" o}bius band in $B^4$, then the linking form $\lambda$ on $H_1 (\Sigma_2(S^3,K);\mathbb{Z})$ splits into a direct sum $(G_1 ,\lambda_1) \oplus (G_2 ,\lambda_2)$, where $\lambda_1$ is represented by the $(1 \times 1)$-matrix $\left(\pm \frac{1}{\vert G_1 \vert}\right)$ and $\lambda_2$ is metabolic.
\end{thm}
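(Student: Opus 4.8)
The plan is to obtain this statement as the special case $W = B^4$, $g = 1$ of the Murakami--Yasuhara linking form bound (\Cref{lem:linkingpairing}).

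First I would check that the hypotheses of \Cref{lem:linkingpairing} are met with $W = B^4$. Indeed, $B^4$ is a compact $4$-manifold with $\partial B^4 = S^3$, with $H_1(B^4;\mathbb{Z}) = 0$ and $b_2(B^4) = 0$. A M\"obius band $M$ is a nonorientable surface of first Betti number $b_1(M) = 1$. Moreover, a (locally flat) properly embedded surface in $B^4$ with boundary a knot in $S^3$ is automatically null-homologous: the long exact sequence of the pair $(B^4, S^3)$ in $\mathbb{Z}/2$-coefficients reads $H_2(B^4;\mathbb{Z}/2) \to H_2(B^4, S^3;\mathbb{Z}/2) \to H_1(S^3;\mathbb{Z}/2)$ with both outer groups zero, so $H_2(B^4, S^3;\mathbb{Z}/2) = 0$ and the relative $\mathbb{Z}/2$-fundamental class of $M$ vanishes. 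Hence a knot $K$ bounding a locally flat M\"obius band in $B^4$ falls under \Cref{lem:linkingpairing} with $W = B^4$ and $g = 1$.

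Applying \Cref{lem:linkingpairing}, the linking form $\lambda$ on $H_1(\Sigma_2(K);\mathbb{Z})$ splits as a direct sum $(G_1, \lambda_1) \oplus (G_2, \lambda_2)$ with $\lambda_2$ metabolic, and there is a square integer matrix $V$ of size $2 b_2(B^4) + g = 2 \cdot 0 + 1 = 1$ with $\det V = \pm |G_1|$, such that $\lambda_1$ is presented by $-V^{-1}$. A $1 \times 1$ integer matrix of determinant $\pm |G_1|$ is exactly $V = (\pm |G_1|)$, so $\lambda_1$ is presented by $-V^{-1} = \left(\mp \frac{1}{|G_1|}\right) = \left(\pm \frac{1}{|G_1|}\right)$ as a form on $H_1(\Sigma_2(K);\mathbb{Z})$ valued in $\mathbb{Q}/\mathbb{Z}$, the ambiguous sign being unaffected by negation. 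This is precisely the asserted $(1\times 1)$ form for $\lambda_1$, and $\lambda_2$ is metabolic as required.

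The only point that needs any attention --- and the nearest thing to an obstacle --- is the verification of the null-homologous hypothesis, which, as noted above, is automatic for $W = B^4$ because $H_2(B^4, S^3;\mathbb{Z}/2) = 0$. Once this is in place, the statement is an immediate specialization of \Cref{lem:linkingpairing}, the remaining content being the bookkeeping of the size ($2b_2(W) + g = 1$) and determinant ($\pm|G_1|$) of the matrix $V$.
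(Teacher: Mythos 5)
Your proposal is correct and is exactly how the paper treats this statement: the paper introduces it as ``a direct corollary'' of \Cref{lem:linkingpairing}, i.e.\ the specialization $W = B^4$, $g = 1$, so that $V$ is a $1\times 1$ integer matrix of determinant $\pm|G_1|$ and $-V^{-1} = \left(\mp\frac{1}{|G_1|}\right)$. Your explicit check that the M\"obius band is automatically null-homologous in $(B^4, S^3)$ is a reasonable extra detail that the paper leaves implicit.
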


To state the next lemma, we introduce the following definition.

\begin{defn}
Let $n$ be a nonzero integer and let $p$ be a prime factor of $n$. Let $r$ be the maximal positive integer such that $p^r$ divides $n$. When $r$ is odd, we say that $p$ is an \emph{odd-power prime factor} of $n$.
\end{defn}

Now using \Cref{thm:murakami}, we can prove the following lemma. 

\begin{lem}
\label{obstlem}
Let $K$ be a knot in $S^3$ satisfying $H_1 (\Sigma_2(S^3,K);\mathbb{Z})\cong \mathbb{Z}/n\mathbb{Z}$ for some positive integer $n$. Write the linking form $\lambda$ of $\Sigma_2(S^3,K)$ as $\lambda(x,x)=\frac{m}{n}x^2 \mod 1$ for some integer $m$. Suppose that there exists an odd-power prime factor $s$ of $n$ such that $m$ and $-m$ are not quadratic residues mod $s$. Then $K$ does not bound a locally flat M{\" o}bius band in $B^4$.
\end{lem}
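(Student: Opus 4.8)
The plan is to argue by contradiction using the splitting of the linking form supplied by \Cref{thm:murakami}. So suppose $K$ bounds a locally flat M\"obius band in $B^4$; then the linking form $\lambda$ of $\Sigma_2(K)$ splits as an orthogonal direct sum $(G_1,\lambda_1)\oplus(G_2,\lambda_2)$ in which $\lambda_1$ is presented by the $(1\times 1)$-matrix $(\pm\tfrac{1}{|G_1|})$ and $\lambda_2$ is metabolic. Since $G_1\oplus G_2\cong H_1(\Sigma_2(K);\mathbb{Z})\cong\mathbb{Z}/n\mathbb{Z}$ is cyclic, I would first record the standard consequences: $\gcd(|G_1|,|G_2|)=1$, both summands are cyclic, $\lambda_1(a,a)\equiv \pm a^{2}/|G_1|\bmod 1$ for a generator of $G_1\cong\mathbb{Z}/|G_1|\mathbb{Z}$, and each prime power occurring in $n$ lies entirely in $|G_1|$ or in $|G_2|$. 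Also, nondegeneracy of $\lambda$ forces $\gcd(m,n)=1$, so in particular $\gcd(m,s)=1$.

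The first real step is to observe that $|G_2|$ is a perfect square. Indeed, a metabolizer $H\le G_2$ for $\lambda_2$ satisfies $H=H^{\perp}$, and nondegeneracy gives $|H|\cdot|H^{\perp}|=|G_2|$, whence $|G_2|=|H|^{2}$. Now let $s$ be the odd-power prime factor of $n$ from the hypothesis, so $v_s(n)=r$ with $r$ odd, where $v_s$ denotes the $s$-adic valuation. From $r=v_s(|G_1|)+v_s(|G_2|)$ and the fact that $v_s(|G_2|)$ is even, $v_s(|G_1|)$ is odd; combined with $\gcd(|G_1|,|G_2|)=1$ this pins down $v_s(|G_1|)=r$ and $s\nmid|G_2|$. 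I would then write $|G_1|=s^{r}u$ with $\gcd(s,u)=1$ and $n'=n/s^{r}$, and note that $n'=u\cdot|G_2|$, so that $n'u=u^{2}|G_2|$ is a perfect square (and coprime to $s$).

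Next I would compare the $s$-primary parts of $\lambda$ and of $\lambda_1$. Because $s\nmid|G_2|$, the $s$-primary part of $\lambda_2$ vanishes, so the $s$-primary part of $\lambda=\lambda_1\oplus\lambda_2$ equals that of $\lambda_1$; both are nondegenerate linking forms on $\mathbb{Z}/s^{r}\mathbb{Z}$. Evaluating on the generator $n'x$ of the $s$-primary subgroup of $H_1(\Sigma_2(K);\mathbb{Z})$ gives $\lambda(n'x,n'x)\equiv m(n')^{2}/n\equiv mn'/s^{r}\bmod 1$, while evaluating on the generator $ug_1$ of the $s$-primary subgroup of $G_1$ gives $\lambda_1(ug_1,ug_1)\equiv \pm u^{2}/|G_1|\equiv \pm u/s^{r}\bmod 1$. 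Since any isometry of $\mathbb{Z}/s^{r}\mathbb{Z}$ is multiplication by a unit, two such forms $(a,b)\mapsto c_i\,ab/s^{r}$ (with $\gcd(c_i,s)=1$) are isomorphic precisely when $c_1/c_2$ is a square modulo $s^{r}$, equivalently modulo $s$, as $(\mathbb{Z}/s^{r}\mathbb{Z})^{\times}$ is cyclic of order $s^{r-1}(s-1)$ with $s^{r-1}$ odd. Applying this with $c_1=mn'$ and $c_2=\pm u$, I conclude that $mn'\cdot(\pm u)$ is a quadratic residue modulo $s$; cancelling the perfect square $n'u$ leaves that $\pm m$ is a quadratic residue modulo $s$ for one of the two signs, i.e.\ $m$ or $-m$ is a quadratic residue modulo $s$ --- contradicting the hypothesis. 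Hence $K$ does not bound a locally flat M\"obius band.

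The hard part is really the bookkeeping in the last two paragraphs: the point is to see that all the ``parasitic'' contributions ($u$ and $|G_2|$) collapse into a single perfect square, which is exactly what metabolicity of $\lambda_2$ (forcing $|G_2|$ to be a square) together with $v_s(n)$ being odd guarantees. Everything else --- the cyclicity of $G_1$ and $G_2$, the shape of $\lambda_1$, the identification of the $s$-primary generators, and the elementary classification of linking forms on $\mathbb{Z}/s^{r}\mathbb{Z}$ --- is routine, and I would not belabor it.
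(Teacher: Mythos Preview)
Your proposal is correct and follows essentially the same route as the paper: argue by contradiction via \Cref{thm:murakami}, use metabolicity to force $|G_2|$ to be a perfect square so that $s\mid |G_1|$, and deduce that one of $\pm m$ is a quadratic residue modulo $s$. The paper's execution is a bit more direct than yours: rather than passing to $s$-primary parts and invoking the classification of linking forms on $\mathbb{Z}/s^{r}\mathbb{Z}$, it simply observes that $G_1$ is generated by $\ell:=|G_2|$ inside $\mathbb{Z}/n\mathbb{Z}$, evaluates $\lambda$ there to get $m\ell r^{2}\equiv \pm 1\pmod{|G_1|}$ for some unit $r$, and reads off immediately that $\pm m$ is a quadratic residue modulo $|G_1|$ (hence modulo $s$) because $\ell$ is already a perfect square coprime to $|G_1|$.
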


\begin{proof}
Suppose that $K$ bounds a M{\" o}bius band. Then we have a splitting 
$$(H_1 (\Sigma_2(S^3,K);\mathbb{Z}),\lambda)=(G_1,\lambda_1)\oplus (G_2,\lambda_2),$$
where $\lambda_1$ is represented by a $(1 \times 1)$-matrix $\left(\pm \frac{1}{\vert G_1 \vert}\right)$ and $\lambda_2$ is metabolic. The order of $G_2$ is a perfect square, so if we write $G_1 =\mathbb{Z}/k\mathbb{Z}$ and $G_2 = \mathbb{Z}/\ell \mathbb{Z}$, then $\ell$ is a perfect square and $s$ divides $k$.

Now let $1$ be a generator of $H_1 (\Sigma_2(S^3,K);\mathbb{Z})$. Then $G_1$ is generated by $\ell=\ell\cdot 1$. By our assumption on $\lambda_1$, there exists an integer $r$, relatively prime to $k$, such that $\lambda_1 (r\ell,r\ell)=\pm \frac{1}{k}$. But since $\lambda_1 (r\ell,r\ell)=\lambda(r\ell,r\ell)=\frac{m\ell r^2}{k}$ and $\ell$ is a perfect square, either $m$ or $-m$ must be a quadratic residue mod $k$. Since $s$ divides $k$, this implies that either $m$ or $-m$ is a quadratic residue mod $s$, a contradiction.
\end{proof}

\begin{exmp} Consider the torus knots $T_{4,q}$, where $q$ is odd. By \Cref{linkingform}, the linking form on $\Sigma_2(S^3,T_{4,q})$ is given by $\lambda(x,x)=-\frac{2}{q}\mod 1$ for some generator $x$ of $H_1(\Sigma_2(S^3,T_{4,q});\mathbb{Z})$. It is well-known that $\pm2$ are both quadratic nonresidues modulo $a$ if and only if $a\equiv 5\pmod 8$. Thus, in this case, the only possible value of $r$ in the statement of Lemma \ref{numlem} is $r=5$. Now by Lemma \ref{obstlem}, if $q$ has a odd-power prime factor $r\equiv 5\pmod 8$, then $T_{4,q}$ does not bound a locally flat M{\"o}bius band.
\label{exmp:4q}
\end{exmp}

More generally, we can prove the following
\begin{prop}
\label{torusobstlem}
Let $p,q>0$ be relatively prime integers, where $p$ is even. Suppose that $\frac{p}{2}$ is not a perfect square. Then there exists an integer $r$, relatively prime to $p$, such that $T_{p,q}$ does not bound a locally flat M{\" o}bius band in $B^4$ whenever $q$ admits an odd-power prime divisor congruent to $r$ mod $2p$.
\end{prop}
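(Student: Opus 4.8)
The plan is to deduce the proposition by stacking the results already in hand: \Cref{lem:H1} identifies the relevant first homology, \Cref{linkingform} identifies the linking form, \Cref{numlem} supplies the arithmetic input, and \Cref{obstlem} packages the whole thing into an obstruction. First I would apply \Cref{numlem} to the even integer $p$, using the hypothesis that $\frac{p}{2}$ is not a perfect square, to produce an integer $r$ relatively prime to $p$ with the property that for every prime $s$ with $s\equiv r\pmod{2p}$, neither $\frac{p}{2}$ nor $-\frac{p}{2}$ is a quadratic residue modulo $s$. This is the integer $r$ claimed in the statement, and it depends only on $p$.

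Next, suppose $q$ admits an odd-power prime divisor $s$ with $s\equiv r\pmod{2p}$; the goal is to show $T_{p,q}$ does not bound a locally flat M\"obius band in $B^4$. Since $p$ is even and $\gcd(p,q)=1$, the integer $q$ is odd, so $s$ is an odd prime, and quadratic residues modulo $s$ make sense. By \Cref{lem:H1} we have $H_1(\Sigma_2(T_{p,q});\mathbb{Z})\cong \mathbb{Z}/q\mathbb{Z}$, and by \Cref{linkingform} there is a generator $x$ of $H_1(\Sigma_2(T_{p,q});\mathbb{Z})$ with $\lambda_{\Sigma(2,p,q)}(x,x)\equiv -\frac{p}{2q}\bmod 1$. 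Taking this $x$ as the generator in \Cref{obstlem}, we are in the situation of that lemma with $n=q$ and $m=-\frac{p}{2}$, which is a genuine integer precisely because $p$ is even.

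Finally, by the defining property of $r$ from \Cref{numlem}, the prime $s\equiv r\pmod{2p}$ has the feature that $\frac{p}{2}$ and $-\frac{p}{2}$ — equivalently $-m$ and $m$ — are both quadratic nonresidues modulo $s$. Since $s$ is by hypothesis an odd-power prime factor of $q=n$, all hypotheses of \Cref{obstlem} are met, and that lemma concludes that $T_{p,q}$ does not bound a locally flat M\"obius band in $B^4$, which is exactly the assertion of the proposition.

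There is essentially no serious obstacle here; the proposition is a formal corollary of the four results cited. The only point deserving a moment's care is verifying that the hypotheses of \Cref{obstlem} match what is produced upstream: one should confirm that $m=-\frac{p}{2}$ is an integer (it is, since $p$ is even) and that the condition ``both $m$ and $-m$ are quadratic nonresidues modulo $s$'' is exactly the output of \Cref{numlem} for the chosen $r$. It is also worth noting in passing that this condition is insensitive to the choice of generator of $H_1(\Sigma_2(T_{p,q});\mathbb{Z})$ — replacing $x$ by a unit multiple changes $m$ by a square unit modulo $q$, hence modulo $s$, and quadratic (non)residuosity is preserved — so invoking the particular generator furnished by \Cref{linkingform} causes no loss of generality.
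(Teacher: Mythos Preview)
Your proof is correct and follows the same approach as the paper: invoke \Cref{numlem} to produce $r$, then feed the linking form computation from \Cref{linkingform} (with $n=q$, $m=-\tfrac{p}{2}$) into \Cref{obstlem}. The paper's proof is terser—it cites only \Cref{numlem} and \Cref{obstlem} and leaves the linking form input implicit—but your more explicit verification of the hypotheses, including the remark on independence of the generator, is a welcome addition rather than a deviation.
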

\begin{proof}
By \Cref{numlem}, there exists an integer $r$, relatively prime to $p$, such that for any prime $s$ congruent to $r$ mod $2p$, $\frac{p}{2}$ and $-\frac{p}{2}$ are quadratic nonresidues mod $s$. The lemma then follows directly from \Cref{obstlem}.
\end{proof}

For the proof of \Cref{asymptotic}, we will use the following estimate.
\begin{lem}
\label{primelem}
Let $P$ be a set consisting of primes, such that the infinite sum $\sum\limits_{s\in P} \frac{1}{s}$ diverges to $\infty$. Given a positive integer $N$, denote the set of positive integers $n$ less than $N$ whose odd-exponent prime factors are not contained in $P$ by $S_{P,N}$, and the set of positive integers $n$ less than $N$ which are relatively prime to $p$ as $T_{p,N}$. Then we have 
\[
\lim_{N\rightarrow\infty} \frac{\vert S_{P,N}\cap T_{p,N}\vert}{\vert T_{p,N} \vert} =0.
\]
\end{lem}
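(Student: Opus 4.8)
The plan is to bound the numerator $|S_{P,N}\cap T_{p,N}|$ by passing to an arbitrary \emph{finite} sub-collection of primes of $P$, estimate the resulting counting function by the square-free-part factorization of an integer, and only invoke the divergence of $\sum_{s\in P}1/s$ at the very end. First I would dispose of the denominator: by inclusion–exclusion, the number of integers in $[1,N)$ coprime to the fixed integer $p$ satisfies $|T_{p,N}|=\frac{\varphi(p)}{p}N+O(1)$, so $|T_{p,N}|$ grows linearly in $N$ with $|T_{p,N}|\sim\frac{\varphi(p)}{p}N$; here $\varphi$ denotes Euler's totient.

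The heart of the argument is an upper bound on $|S_{P,N}|$. Note that $n\in S_{P,N}$ exactly when the $s$-adic valuation $v_s(n)$ is even for every $s\in P$; hence for any finite subset $P'\subseteq P$ we have $S_{P,N}\subseteq S_{P',N}$, and it suffices to estimate $|S_{P',N}|$ for $P'$ fixed. Set $Q'=\prod_{s\in P'}s$ and write each $n\in[1,N)$ uniquely as $n=n_1 n_2$, where $n_1$ is the largest divisor of $n$ supported on the primes of $P'$ and $\gcd(n_2,Q')=1$. The condition defining $S_{P',N}$ says precisely that every $v_s(n_1)$ with $s\in P'$ is even, i.e.\ $n_1=t^2$ for some $t$ supported on $P'$. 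For each such $t$ with $t^2<N$, the number of admissible $n_2<N/t^2$ (automatically coprime to $Q'$) is $\frac{\varphi(Q')}{Q'}\cdot\frac{N}{t^2}+O_{P'}(1)$. Summing over $t$, using $\sum_{t\text{ supported on }P'}t^{-2}=\prod_{s\in P'}(1-s^{-2})^{-1}$ together with the identity $\frac{\varphi(Q')}{Q'}\prod_{s\in P'}(1-s^{-2})^{-1}=\prod_{s\in P'}\frac{s}{s+1}$ (valid since $Q'$ is squarefree), and bounding the number of admissible $t$ by $\sqrt N$, I get
\[
|S_{P',N}|\;\le\;\Big(\prod_{s\in P'}\frac{s}{s+1}\Big)\,N+O_{P'}\!\big(\sqrt N\big).
\]
The error term is harmless because $P'$ is fixed before letting $N\to\infty$.

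Combining the two estimates, for every finite $P'\subseteq P$,
\[
\limsup_{N\to\infty}\frac{|S_{P,N}\cap T_{p,N}|}{|T_{p,N}|}\;\le\;\limsup_{N\to\infty}\frac{|S_{P',N}|}{|T_{p,N}|}\;=\;\frac{p}{\varphi(p)}\prod_{s\in P'}\frac{s}{s+1}.
\]
It remains to let $P'$ exhaust $P$: since $\sum_{s\in P}1/s=\infty$, the product $\prod_{s\in P}(1+1/s)$ diverges, hence $\prod_{s\in P'}\frac{s}{s+1}=\prod_{s\in P'}(1+1/s)^{-1}\to 0$ as $P'\uparrow P$. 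Therefore the left-hand side is $0$, which gives the claimed limit.

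The computation is elementary throughout, so there is no deep obstacle; the one point requiring care is organizational, namely to fix the finite set $P'$ first, send $N\to\infty$ (so the $P'$-dependent error terms disappear in the ratio), and only afterwards enlarge $P'$ to exploit the divergence of $\sum 1/s$. A cleaner but less self-contained alternative would replace the square-free-part count by a Rankin-type Euler-product upper-bound sieve, but I expect the factorization argument sketched above to be the shortest route.
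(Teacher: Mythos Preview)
Your proof is correct and follows essentially the same route as the paper: both identify the relevant density with the Euler product $\prod_{s\in P}\frac{s}{s+1}$ and then use the divergence of $\sum_{s\in P}1/s$ to show this product vanishes. Your version is in fact more careful than the paper's, which simply asserts the density formula ``by the inclusion--exclusion principle'' without the finite-truncation argument you supply; your justification that $\prod_{s\in P}(1+1/s)$ diverges is also cleaner than the paper's logarithmic estimate.
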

\begin{proof}
Without loss of generality, we can assume that $P$ does not contain any prime factors of $p$. Then, by the inclusion-exclusion principle, the given limit can be computed as the infinite product
\[
\prod_{s\in P} \left(1-\frac{1}{s}+\frac{1}{s^2}-\frac{1}{s^3}+\cdots\right)=\prod_{s\in P} \frac{s}{s+1}.
\]
Taking logarithms gives
\[
\log\left(\prod_{s\in P} \frac{s}{s+1}\right)=-\sum_{s\in P} (\log(s+1)-\log(s)) \le -\sum\limits_{s\in P} \frac{1}{s}=-\infty.
\]
Here, we used the fact that $\log(s+1)-\log(s)\ge \frac{1}{s}$ and $\sum\limits_{s\in P}\frac{1}{s}=\infty$. Therefore the limit given in the statement of the lemma converges to zero.
\end{proof}

Recall the following estimate on the sum of reciprocals of primes in arithmetic progressions, which is a well-known corollary of the prime number theorem.
\begin{lem}
\label{distlem}
For any choice of positive integers $N$, $k$, and $\ell$, such that $\ell$ is relatively prime to $k$, there exists a constant $C_{k,\ell}$ which depends only on $k$ and $\ell$ such that
\[
\sum_{\substack{s\le N,\,s\equiv \ell \pmod k,\\ s\text{ is prime}}} \frac{1}{s} = \frac{\log\log N}{\phi(k)} + C_{k,\ell} + O\left(\frac{1}{\log N}\right).
\]
\end{lem}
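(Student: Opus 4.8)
The statement is the analogue of Mertens' second theorem for primes in an arithmetic progression, and it is classical; I will describe the route I would take rather than reproduce every estimate. The plan is to combine the prime number theorem for arithmetic progressions with partial summation. Write $\pi(x;k,\ell)=\#\{s\le x: s\text{ prime},\ s\equiv \ell\pmod k\}$. Since $\gcd(k,\ell)=1$, the prime number theorem in the progression $\ell\bmod k$ --- which rests on a zero-free region for the Dirichlet $L$-functions modulo $k$ --- gives, for a suitable constant $c=c(k)>0$,
\[
\pi(x;k,\ell)=\frac{\mathrm{Li}(x)}{\phi(k)}+R(x),\qquad R(x)=O\!\left(x\,e^{-c\sqrt{\log x}}\right).
\]

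Next I would apply partial summation against the weight $1/t$ to convert this into the sum over reciprocals of primes:
\begin{align*}
\sum_{\substack{s\le N,\ s\text{ prime}\\ s\equiv \ell\pmod k}}\frac1s
&=\frac{\pi(N;k,\ell)}{N}+\int_{2}^{N}\frac{\pi(t;k,\ell)}{t^{2}}\,dt\\
&=O\!\left(\frac{1}{\log N}\right)+\frac{1}{\phi(k)}\int_{2}^{N}\frac{\mathrm{Li}(t)}{t^{2}}\,dt+\int_{2}^{N}\frac{R(t)}{t^{2}}\,dt.
\end{align*}
For the middle term, integrating by parts and using $\mathrm{Li}'(t)=1/\log t$ produces $\int_{2}^{N}\frac{dt}{t\log t}=\log\log N-\log\log 2$ together with boundary contributions of size $O(1/\log N)$ plus an absolute constant. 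For the last term, $\int_{2}^{\infty}R(t)\,t^{-2}\,dt$ converges, and the tail $\int_{N}^{\infty}R(t)\,t^{-2}\,dt$ is $O\!\big(e^{-c'\sqrt{\log N}}\big)=O(1/\log N)$. Absorbing all the $N$-independent pieces into a single constant $C_{k,\ell}$ (which depends only on $k$ and $\ell$, through $\phi(k)$ and the convergent tail integral of $R$) then yields the asserted formula.

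An equivalent and perhaps more transparent variant would instead expand the indicator $\mathbf 1_{s\equiv\ell\ (k)}$ via orthogonality of the Dirichlet characters mod $k$, reducing the claim to estimates for $\sum_{p\le N}\chi(p)/p$: the principal character contributes $\log\log N+M-\sum_{p\mid k}p^{-1}+O(1/\log N)$ by the classical Mertens theorem (with $M$ the Meissel--Mertens constant), while each non-principal $\chi$ contributes $\log L(1,\chi)+O(1/\log N)$, using Dirichlet's non-vanishing $L(1,\chi)\ne 0$; summing over $\chi$ and dividing by $\phi(k)$ gives the same answer. Either way, the one genuinely non-elementary ingredient --- and the step I expect to be the crux --- is the quantitative prime number theorem for arithmetic progressions (equivalently, a zero-free region for the relevant $L$-functions), which is needed to control the error at the stated strength $O(1/\log N)$; everything after that is calculus. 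I would also note that for the sole use of this lemma in the proof of \Cref{asymptotic} only the leading term $\tfrac{\log\log N}{\phi(k)}$ --- hence merely the divergence of the sum --- is required, and that much already follows from Dirichlet's theorem on primes in arithmetic progressions.
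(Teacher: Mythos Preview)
Your sketch is correct and follows one of the standard routes to Mertens' theorem for arithmetic progressions. Note, however, that the paper does not actually prove this lemma: it is stated as a ``well-known corollary of the prime number theorem'' and left without proof. So you have supplied strictly more than the paper does. Your closing observation is also apt: for the application in \Cref{asymptotic} only the divergence of the sum is used, so the full strength of the error term (and hence the prime number theorem) is not really needed there.
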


We are finally ready to prove the desired asymptotics.
\begin{proof}[Proof of \Cref{asymptotic}]
By \Cref{torusobstlem}, there exists some $r$, relatively prime to $p$, such that $T_{p,q}$ does not bound a locally flat M{\" o}bius band in $B^4$ whenever $q$ admits an odd-power prime divisor congruent to $r$ mod $2p$. Moreover, by \Cref{distlem}, the sum of reciprocals of all primes congruent to $r$ mod $2p$ diverges to $\infty$. The theorem then follows from \Cref{primelem}.
\end{proof}

\bibliographystyle{amsalpha}
\bibliography{citations}

\providecommand{\bysame}{\leavevmode\hbox to3em{\hrulefill}\thinspace}
\providecommand{\MR}{\relax\ifhmode\unskip\space\fi MR }
\providecommand{\MRhref}[2]{%
  \href{http://www.ams.org/mathscinet-getitem?mr=#1}{#2}
}
\providecommand{\href}[2]{#2}
\begin{thebibliography}{Zem19b}

\bibitem[All23]{allen2020nonorientable}
Samantha Allen, \emph{Nonorientable surfaces bounded by knots: a geography
  problem}, New York J. Math. \textbf{29} (2023), 1038--1059. \MR{4646147}

\bibitem[Bal20]{ballinger2020concordance}
William Ballinger, \emph{Concordance invariants from the {$ E (-1) $} spectral
  sequence on khovanov homology}, arXiv preprint arXiv:2004.10807 (2020).

\bibitem[Bat14]{batson2012nonorientable}
Joshua Batson, \emph{Nonorientable slice genus can be arbitrarily large}, Math.
  Res. Lett. \textbf{21} (2014), no.~3, 423--436. \MR{3272020}

\bibitem[BCG17]{bodnarceloriagolla}
J\'{o}zsef Bodn\'{a}r, Daniele Celoria, and Marco Golla, \emph{A note on
  cobordisms of algebraic knots}, Algebr. Geom. Topol. \textbf{17} (2017),
  no.~4, 2543--2564. \MR{3686406}

\bibitem[BHS19]{borodzik2019involutive}
Maciej Borodzik, Jennifer Hom, and Andrzej Schinzel, \emph{Involutive
  {H}eegaard {F}loer homology and rational cuspidal curves}, Proceedings of the
  London Mathematical Society \textbf{118} (2019), no.~3, 441--472.

\bibitem[BL14]{borodziklivingston2014}
Maciej Borodzik and Charles Livingston, \emph{Heegaard {F}loer homology and
  rational cuspidal curves}, Forum Math. Sigma \textbf{2} (2014), Paper No.
  e28, 23. \MR{3347955}

\bibitem[Cla78]{clark}
Bradd~Evans Clark, \emph{Crosscaps and knots}, Internat. J. Math. Math. Sci.
  \textbf{1} (1978), no.~1, 113--123. \MR{478131}

\bibitem[Cox11]{cox2011primes}
David~A Cox, \emph{Primes of the form $x^2+ ny^2$: {F}ermat, class field
  theory, and complex multiplication}, vol.~34, John Wiley \& Sons, 2011.

\bibitem[DM19]{dai2019involutive}
Irving Dai and Ciprian Manolescu, \emph{Involutive {H}eegaard {F}loer homology
  and plumbed three-manifolds}, Journal of the Institute of Mathematics of
  Jussieu \textbf{18} (2019), no.~6, 1115--1155.

\bibitem[Don87]{donaldson}
S.~K. Donaldson, \emph{The orientation of {Y}ang-{M}ills moduli spaces and
  {$4$}-manifold topology}, J. Differential Geom. \textbf{26} (1987), no.~3,
  397--428. \MR{910015}

\bibitem[DS24]{daemi2020chern}
Aliakbar Daemi and Christopher Scaduto, \emph{Chern--{S}imons functional,
  singular instantons, and the four-dimensional clasp number}, J. Eur. Math.
  Soc. (JEMS) \textbf{26} (2024), no.~6, 2127--2190. \MR{4742808}

\bibitem[Fan19]{fan2019unoriented}
Haofei Fan, \emph{Unoriented cobordism maps on link {F}loer homology}, [PhD
  Thesis, University of California, Los Angeles], 2019.

\bibitem[FG23]{feller2020nonorientable}
Peter Feller and Marco Golla, \emph{Non-orientable slice surfaces and inscribed
  rectangles}, Ann. Sc. Norm. Super. Pisa Cl. Sci. (5) \textbf{24} (2023),
  no.~3, 1463--1485. \MR{4675965}

\bibitem[FK17]{fellerkrcatovich}
Peter Feller and David Krcatovich, \emph{On cobordisms between knots, braid
  index, and the upsilon-invariant}, Math. Ann. \textbf{369} (2017), no.~1-2,
  301--329. \MR{3694648}

\bibitem[FPR19]{fellerparkray}
Peter Feller, JungHwan Park, and Arunima Ray, \emph{On the {U}psilon invariant
  and satellite knots}, Math. Z. \textbf{292} (2019), no.~3-4, 1431--1452.
  \MR{3980298}

\bibitem[Fr{\o}02]{froyshov}
Kim~A. Fr{\o}yshov, \emph{Equivariant aspects of {Y}ang-{M}ills {F}loer
  theory}, Topology \textbf{41} (2002), no.~3, 525--552. \MR{1910040}

\bibitem[Gha22]{ghanbarian2020}
Nakisa Ghanbarian, \emph{The non-orientable 4-genus for knots with 10
  crossings}, J. Knot Theory Ramifications \textbf{31} (2022), no.~5, Paper No.
  2250034, 46. \MR{4450931}

\bibitem[GL78]{gordonlitherland}
C.~McA. Gordon and R.~A. Litherland, \emph{On the signature of a link}, Invent.
  Math. \textbf{47} (1978), no.~1, 53--69. \MR{500905}

\bibitem[GL11]{gilmerlivingston}
Patrick~M. Gilmer and Charles Livingston, \emph{The nonorientable 4-genus of
  knots}, J. Lond. Math. Soc. (2) \textbf{84} (2011), no.~3, 559--577.
  \MR{2855790}

\bibitem[GLM81]{gordonlitherlandmurasugi}
C.~McA. Gordon, R.~A. Litherland, and K.~Murasugi, \emph{Signatures of covering
  links}, Canadian J. Math. \textbf{33} (1981), no.~2, 381--394. \MR{617628}

\bibitem[GM18]{gollamarengon}
Marco Golla and Marco Marengon, \emph{Correction terms and the nonorientable
  slice genus}, Michigan Math. J. \textbf{67} (2018), no.~1, 59--82.
  \MR{3770853}

\bibitem[GM23]{gong2021non}
Sherry Gong and Marco Marengon, \emph{Nonorientable link cobordisms and torsion
  order in {F}loer homologies}, Algebr. Geom. Topol. \textbf{23} (2023), no.~6,
  2627--2672. \MR{4640136}

\bibitem[Hed09]{hedden2009}
Matthew Hedden, \emph{On knot {F}loer homology and cabling. {II}}, Int. Math.
  Res. Not. IMRN (2009), no.~12, 2248--2274. \MR{2511910}

\bibitem[HL17]{hogancamp2017involutive}
Matthew Hogancamp and Charles Livingston, \emph{An involutive upsilon knot
  invariant}, arXiv preprint arXiv:1710.08360 (2017).

\bibitem[HM17]{hendricks2017involutive}
Kristen Hendricks and Ciprian Manolescu, \emph{Involutive {H}eegaard {F}loer
  homology}, Duke Mathematical Journal \textbf{166} (2017), no.~7, 1211--1299.

\bibitem[IR90]{ireland1990classical}
Kenneth Ireland and Michael Rosen, \emph{A classical introduction to modern
  number theory}, Graduate Texts in Mathematics, Springer, 1990.

\bibitem[JK18]{jabukakelly}
Stanislav Jabuka and Tynan Kelly, \emph{The nonorientable 4-genus for knots
  with 8 or 9 crossings}, Algebr. Geom. Topol. \textbf{18} (2018), no.~3,
  1823--1856. \MR{3784020}

\bibitem[JTZ21]{juhasz2018naturality}
Andr\'{a}s Juh\'{a}sz, Dylan Thurston, and Ian Zemke, \emph{Naturality and
  mapping class groups in {H}eegard {F}loer homology}, Mem. Amer. Math. Soc.
  \textbf{273} (2021), no.~1338, v+174. \MR{4337438}

\bibitem[Juh16]{juhasz2016cobordisms}
Andr{\'a}s Juh{\'a}sz, \emph{Cobordisms of sutured manifolds and the
  functoriality of link {F}loer homology}, Advances in Mathematics \textbf{299}
  (2016), 940--1038.

\bibitem[JVC20]{jabukavanCott3and4}
Stanislav Jabuka and Cornelia~A. Van~Cott, \emph{Comparing nonorientable three
  genus and nonorientable four genus of torus knots}, J. Knot Theory
  Ramifications \textbf{29} (2020), no.~3, 2050013, 15. \MR{4101607}

\bibitem[JVC21]{jabuka2019nonorientable}
\bysame, \emph{On a nonorientable analogue of the {M}ilnor conjecture}, Algebr.
  Geom. Topol. \textbf{21} (2021), no.~5, 2571--2625. \MR{4334520}

\bibitem[JZ20]{juhasz2020concordance}
Andr{\'a}s Juh{\'a}sz and Ian Zemke, \emph{New {H}eegaard {F}loer slice genus
  and clasp number bounds}, arXiv preprint arXiv:2007.07106 (2020).

\bibitem[KM93]{kronheimermrowka}
Peter~B Kronheimer and Tomasz~S Mrowka, \emph{Gauge theory for embedded
  surfaces, {I}}, Topology \textbf{32} (1993), no.~4, 773--826.

\bibitem[Lev66]{levine}
Jerome Levine, \emph{Polynomial invariants of knots of codimension two}, Annals
  of Mathematics (1966), 537--554.

\bibitem[Lob19]{lobb}
Andrew Lobb, \emph{A counterexample to {B}atson's conjecture}, Math. Res. Lett.
  \textbf{26} (2019), no.~6, 1789. \MR{4078695}

\bibitem[Lon20]{longo}
Vincent Longo, \emph{An infinite family of counterexamples to {B}atson's
  conjecture}, arXiv preprint arXiv:2011.00122 (2020).

\bibitem[MO07]{manolescu2007concordance}
Ciprian Manolescu and Brendan Owens, \emph{A concordance invariant from the
  {F}loer homology of double branched covers}, International Mathematics
  Research Notices \textbf{2007} (2007), no.~9, rnm077--rnm077.

\bibitem[Mos71]{moser}
Louise Moser, \emph{Elementary surgery along a torus knot}, Pacific J. Math.
  \textbf{38} (1971), 737--745. \MR{383406}

\bibitem[MY00]{murakami2000four}
Hitoshi Murakami and Akira Yasuhara, \emph{Four-genus and four-dimensional
  clasp number of a knot}, Proceedings of the American Mathematical Society
  \textbf{128} (2000), no.~12, 3693--3699.

\bibitem[NW15]{niwu}
Yi~Ni and Zhongtao Wu, \emph{Cosmetic surgeries on knots in {$S^3$}}, J. Reine
  Angew. Math. \textbf{706} (2015), 1--17. \MR{3393360}

\bibitem[OS03]{ozsvath2003absolutely}
Peter Ozsv{\'a}th and Zolt{\'a}n Szab{\'o}, \emph{Absolutely graded {F}loer
  homologies and intersection forms for four-manifolds with boundary}, Advances
  in Mathematics \textbf{173} (2003), no.~2, 179--261.

\bibitem[OS05]{ozsvath2005knot}
\bysame, \emph{On knot {F}loer homology and lens space surgeries}, Topology
  \textbf{44} (2005), no.~6, 1281--1300.

\bibitem[OSS17]{ozsvath2017unoriented}
Peter~S Ozsv{\'a}th, Andr{\'a}s~I Stipsicz, and Zolt{\'a}n Szab{\'o},
  \emph{Unoriented knot {F}loer homology and the unoriented four-ball genus},
  International Mathematics Research Notices \textbf{2017} (2017), no.~17,
  5137--5181.

\bibitem[Ras04]{rasmussen2004lens}
Jacob Rasmussen, \emph{Lens space surgeries and a conjecture of {G}oda and
  {T}eragaito}, Geometry \& Topology \textbf{8} (2004), no.~3, 1013--1031.

\bibitem[Ste96]{stevens1996homology}
Wayne~H Stevens, \emph{On the homology of branched cyclic covers of knots},
  [PhD Thesis, Louisiana State University and Agricultural \& Mechanical
  College] (1996).

\bibitem[Tai20]{tairi}
Nafie Tairi, \emph{The smooth nonorientable 4-genus of an infinite family of
  torus knots}, [Master's Thesis, University of Bern] (2020).

\bibitem[Ter04]{teragaito}
Masakazu Teragaito, \emph{Crosscap numbers of torus knots}, Topology Appl.
  \textbf{138} (2004), no.~1-3, 219--238. \MR{2035482}

\bibitem[Vir75]{viro}
Oleg~Yanovich Viro, \emph{Positioning in codimension 2, and the boundary},
  Uspekhi Matematicheskikh Nauk \textbf{30} (1975), no.~1, 231--232.

\bibitem[Wal04]{wall2004singular}
Charles Terence~Clegg Wall, \emph{Singular points of plane curves}, no.~63,
  Cambridge University Press, 2004.

\bibitem[Wan18]{wang2018semigroups}
Shida Wang, \emph{Semigroups of {L}-space knots and nonalgebraic iterated torus
  knots}, Mathematical Research Letters \textbf{25} (2018), no.~1, 335--346.

\bibitem[Yas96]{Yasuhara}
Akira Yasuhara, \emph{Connecting lemmas and representing homology classes of
  simply connected {$4$}-manifolds}, Tokyo J. Math. \textbf{19} (1996), no.~1,
  245--261. \MR{1391941}

\bibitem[Zem19a]{zemke2019connected}
Ian Zemke, \emph{Connected sums and involutive knot {F}loer homology},
  Proceedings of the London Mathematical Society \textbf{119} (2019), no.~1,
  214--265.

\bibitem[Zem19b]{zemke2019link}
\bysame, \emph{Link cobordisms and functoriality in link {F}loer homology},
  Journal of Topology \textbf{12} (2019), no.~1, 94--220.

\end{thebibliography}
\end{document}